\documentclass[oneside,12pt,a4paper,english]{amsart}
\usepackage{pdflscape}
\usepackage{booktabs}
\usepackage{graphicx}
\usepackage{amsmath}
\usepackage{amssymb}
\usepackage{multirow}
\usepackage[table]{xcolor}
\usepackage[english]{babel}
\usepackage{amsmath}
\usepackage{amssymb}
\usepackage{amsthm}
\usepackage{amstext}
\usepackage{hyperref}
\usepackage{xcolor}
\usepackage{diagbox}
\usepackage{placeins}
\usepackage[most]{tcolorbox} 
\usepackage{geometry}

\definecolor{Blueish}{HTML}{1c4197}

\newtcolorbox{Chaptersetting}[1][]{
	colback=black!5,
	fonttitle=\bfseries,
	title={#1},
	coltitle=black,
	sharp corners,
	borderline west={2pt}{0pt}{Blueish},
	enhanced,
}

\DeclareMathOperator{\R}{\mathbb{R}}

\DeclareMathOperator{\Z}{\mathbb{Z}}

\DeclareMathOperator{\C}{\mathbb{C}}

\DeclareMathOperator{\SU}{SU}
\DeclareMathOperator{\SL}{SL}
\DeclareMathOperator{\GL}{GL}
\DeclareMathOperator{\SO}{SO}
\DeclareMathOperator{\U}{U}

\DeclareMathOperator{\Sp}{Sp}
\DeclareMathOperator{\Spin}{Spin}
\renewcommand{\k}{\ensuremath{\mathfrak{k}}}
\renewcommand{\t}{\ensuremath{\mathfrak{t}}}

\renewcommand{\u}{\ensuremath{\mathfrak{u}}}

\renewcommand{\sp}{\ensuremath{\mathfrak{sp}}}
\DeclareMathOperator{\so}{\mathfrak{so}}
\DeclareMathOperator{\su}{\mathfrak{su}}

\renewcommand{\l}{\ensuremath{\mathfrak{l}}}

\newcommand{\h}{\ensuremath{\mathfrak{h}}}

\newcommand{\myrestriction}{{\,\restriction\,}}

\DeclareMathOperator{\tr}{tr}
\DeclareMathOperator{\Ad}{Ad}

\DeclareMathOperator{\scal}{scal}
\DeclareMathOperator{\Ric}{Ric}

\DeclareMathOperator{\id}{Id}  

\DeclareMathOperator{\Hom}{Hom}

\DeclareMathOperator{\mult}{mult}

\renewcommand{\hat}[1]{\widehat{#1}}

\theoremstyle{plain}
\newtheorem{thm}{Theorem}[section]
\newtheorem*{thm*}{Theorem}
\newtheorem{lem}[thm]{Lemma}
\newtheorem*{lem*}{Lemma}
\newtheorem{stand}{Standing Assumption}
\newtheorem{prop}[thm]{Proposition}
\newtheorem{cor}[thm]{Corollary}

\theoremstyle{definition}
\newtheorem{rem}[thm]{Remark}
\newtheorem{dfn}[thm]{Definition}

\usepackage{graphicx}
\usepackage{tabularx}

\usepackage{tikz}

\begin{document}

	\title{Explicit Laplace Spectra of Homogeneous Principal Bundles}
    	\author{Ilka Agricola, Leandro Cagliero and Jonas Henkel}
	
	\address{\hspace{-0.8cm} 
\; Ilka Agricola, Jonas Henkel,\newline
Fachbereich Mathematik und Informatik,
Philipps-Universit\"at Marburg,\newline
Campus Lahnberge,
35032 Marburg, Germany\newline 
{\normalfont\ttfamily agricola@mathematik.uni-marburg.de, henkelj@mathematik.uni-marburg.de}
\vspace{0.2cm}\newline
Leandro Cagliero,\newline
CIEM-CONICET and FAMAF-Universidad Nacional de C\'ordoba,\newline
Medina Allende s/n, Ciudad Universitaria,
5000 C\'ordoba, Argentina;\newline 
Guangdong Technion Israel Institute of Technology, \newline 241
Daxue Road, Shantou, China\newline
{\normalfont\ttfamily leandro.cagliero@unc.edu.ar}}

\subjclass{Primary: 53C30, 43A85, 58J50; Secondary: 53C21, 53C25; 43A90}

	\maketitle
    \begin{center}
   \today
    \end{center}
    
 \begin{abstract}
We present a unified representation-theoretic method to compute the Laplace-Beltrami spectrum on homogeneous principal bundles. For this setting, we introduce a multi-parameter family of metric deformations called generalized canonical variations. Building upon the geometric realization of such fibrations as naturally reductive spaces, we establish a simplified spectral branching criterion. We apply this method to derive the full spectra—yielding all eigenvalues and multiplicities—for several prominent geometric families. Specifically, we compute the full spectra for the entire classical series of homogeneous 3-$(\alpha,\delta)$-Sasaki manifolds (Types A, B, C, and D) and for real and complex Stiefel manifolds over Grassmannians. These explicit formulas provide the analytical data to investigate related problems in geometric analysis. As an application, we classify the scalar stability of these spaces under Perelman's $\nu$-entropy and, for the 3-$(\alpha,\delta)$-Sasaki manifolds, determine the exact thresholds for Yamabe bifurcations.\end{abstract}

\section{Introduction}\label{sec:Intro}

\subsection*{Generalized Canonical Variations}
Understanding the Laplace–\allowbreak Beltrami spectrum is a key issue in geometric analysis because it is closely linked to the geometry, isometries and deformations of a Riemannian manifold. In this paper, we study the Laplace-Beltrami operator on \emph{homogeneous principal bundles} $P \to B$. Specifically, we focus on the geometric setting in which a compact Lie group $G$ already acts transitively on the total space $P$, such that the bundle structure arises from a $G$-equivariant fibration $\pi: G/K \to G/L$. 
The geometric fiber of the submersion is the homogeneous space $F \cong L/K$, which naturally carries the structure of a compact Lie group.

Our primary objects of study are families of $G$-invariant metrics obtained by rescaling the background metric along the fiber. If the structure group $F$ splits into commuting simple factors, our approach naturally accommodates independent scaling parameters for each factor, leading to what we refer to as \emph{generalized canonical variations}. From a geometric viewpoint, starting with a normal homogeneous metric ensures that these fibrations automatically have totally geodesic fibers, placing them firmly in the geometric context studied by Bérard-Bergery and Bourguignon \cite{Bergery_et_al}, (see also \cite{BessonBordoni90}).

\subsection*{Context and Motivation}
Explicit knowledge of the complete Laplace spectrum on such bundles has direct applications in theoretical physics and global variational problems.

In string theory and M-theory, compactifications on manifolds with special geometries (such as 3-Sasaki manifolds) are heavily studied via the AdS/CFT correspondence. The exact eigenvalues of the Laplace-Beltrami operator on the internal manifold determine the mass spectrum of the Kaluza-Klein modes (see, e.g., \cite[Eq. (6.18)-(6.20)]{Fabbri_et_al_2000}).

In geometric analysis, the family of homogeneous metrics $\{g_{\mathbf{t}}\}$ arising from a canonical variation naturally provides a 1-parameter family of trivial solutions to the Yamabe problem. Bettiol and Piccione \cite{BettiolPiccione13} demonstrated that on Berger spheres, symmetry-breaking bifurcations occur exactly when an eigenvalue of the total space Laplacian crosses a specific scalar curvature threshold. More recently, Otoba and Petean \cite{OtobaPetean20} established abstractly that an infinite sequence of such bifurcations arises along canonical variations, provided the initial fibration consists of spaces with constant scalar curvature. This bifurcation theory has subsequently been applied to various settings, including product manifolds \cite{deLimaPiccioneZedda12} and to maximal flag manifolds \cite{GramaLima20}. 
While the first eigenvalue $\eta_1$ determines the overall stability and local rigidity of these solutions (cf. \cite{1BLP22} for distance spheres and \cite{2BLP22} for homogeneous metrics on CROSSes), knowing the \emph{entire} spectrum is necessary to locate all higher degeneracy points and to completely map out the multiple branches of non-homogeneous Yamabe metrics.

The scalar spectrum also plays a crucial role in studying the stability of Einstein metrics. According to Cao and He \cite{CaoHe15}, the lowest eigenvalue $\eta_1$ governs the linear stability of Perelman's $\nu$-entropy against conformal deformations. Specifically, instability occurs if $\eta_1 < 2\Lambda$, where $\Lambda$ is the Einstein constant. In this case, we refer to the metric as \textit{strictly scalar-unstable}, and if $\eta_1 > 2\Lambda$, we refer to it as \textit{strictly scalar-stable}. While establishing stability against tensor perturbations is a highly non-trivial problem (recently advanced by Schwahn \cite{Schwahn22} for symmetric spaces), establishing full $\nu$-stability additionally requires determining the first scalar eigenvalue or finding appropriate estimates from below. This necessity has motivated recent research, ranging from the analysis of standard metrics on homogeneous spaces \cite{LauretLauret23, LauretTolcachier25} to the study of deformed families like canonical variations \cite{WangWang22}.

However, explicit computations of the entire spectrum of deformed metrics are still a rarity. In the literature, full eigenvalue computations for canonical variations have been largely confined to spheres and projective spaces. Early seminal works focused on determining the first eigenvalue $\eta_1$ for Hopf fibrations \cite{Urakawa79, Tanno79, Tanno80}. This line of research remains highly active, culminating in recent exact descriptions of the lowest eigenvalues for homogeneous 3-spheres \cite{Lauret19}, the first eigenvalue of general homogeneous CROSSes \cite{2BLP22}, and the full spectra of distance spheres \cite{1BLP22}. However, for bundles involving more intricate structure groups, or for non-spherical total spaces such as Stiefel manifolds, explicit descriptions of the \emph{entire} spectrum are very rare. Consequently, stability analyses on these spaces often rely on bounds or remain open problems.

\subsection*{Methodological Advance} To derive explicit formulas for the complete spectra of these families, we build upon the geometric framework established in \cite{AgricolaHenkel25}.
This framework realizes generalized canonical variations on $G/K$ algebraically as naturally reductive metrics on the product space $(G \times H)/K'$. While it provides a useful theoretical correspondence, its practical application is combinatorially challenging when identifying $K'$-spherical representations. In \cite{AgricolaHenkel25}, this task relied on ad-hoc counting arguments or explicit numerical evaluations of Kostant's partition function to find invariant vectors. This is a process that, while effective for isolated examples, does not readily yield general formulas for entire families of manifolds.
 To overcome this, we demonstrate that the search for $K'$-spherical vectors can be systematically structured as a sequential algebraic branching problem. We establish a branching criterion that exploits the product structure of the base isotropy group $L = K \cdot H$. By first restricting the $G$-representation to $L$, the condition for diagonal invariance simplifies via Schur's lemma: the problem reduces to identifying the dual fiber representation within the $K$-trivial subspace of this restriction.
By combining specific representation-theoretic tools—most notably the reciprocity principle (Lemma \ref{lem:reciprocity}) and the Littlewood-Richardson rule—we derive explicit algebraic formulas for the eigenvalues and their multiplicities.

\subsection*{Main Results}
We apply this method to the following geometric classes:

\begin{itemize}
    \item \textit{Classical homogeneous 3-$(\alpha, \delta)$-Sasaki manifolds:} We compute the complete spectra for all classical series (Types A, B, C, and D). This generalizes the prior computation for the Aloff-Wallach manifold $W^{1,1}$ \cite{AgricolaHenkel25}. While the spectrum for the Type C series was previously known \cite[Thm. 3.8]{2BLP22}—which our approach naturally recovers (cf. Remark \ref{rem: comparison to bettiol})—the spectra for Types A, B, and D are newly established.
    
    \item \textit{Stiefel manifolds:} We determine the spectra for both real and complex Stiefel manifolds, treated as principal bundles over their respective Grassmannians. 
\end{itemize}
In Section \ref{sec:applications}, we use these spectral results to analyze the stability of the associated Einstein metrics. For Perelman's $\nu$-entropy, we explicitly evaluate the scalar stability condition, proving that both Einstein metrics on Type A 3-$(\alpha,\delta)$-Sasaki manifolds are strictly scalar-stable. The squashed Einstein metrics on Type B/D and Type C 3-$(\alpha,\delta)$-Sasaki manifolds, as well as the investigated Einstein metrics on Stiefel manifolds, are strictly scalar-unstable (and therefore dynamically unstable under the Ricci flow). Finally, we determine the threshold equations governing symmetry-breaking Yamabe bifurcations along these families.
\section*{Acknowledgments}
This project was initiated during the joint workshop \emph{`Lie Theory in Geometry, Algebra and Analysis'} held in Córdoba, Argentina, in March 2025. The first and third authors gratefully acknowledge the hospitality and support of the National University of Córdoba during this event. Their stay was funded by the Deutsche Forschungsgemeinschaft (DFG, German Research Foundation) - 541920696. Furthermore, the second author wishes to express his gratitude to the University of Marburg for the support to this project and its very warm hospitality during his visit in September 2025.
The second author is partially supported by grants SeCyT-UNC 33620230100117CB and PIP 11220210100597CO.

Finally, the authors thank Emilio Lauret for pointing out the references \cite{Chami_Spn}, \cite{Chami}, and \cite{BenHalima07}.

\subsection*{Declaration of Generative AI in the writing process}
During the preparation of this work, the third-named author used generative AI tools (Gemini 2.5/3 Pro, ChatGPT 4o-5.5 Thinking, Grok 4, DeepL) for language refinement, structural editing, term evaluation/term comparisons, literature search and analysis. After using these tools, the authors rigorously reviewed, verified, and edited all content. The authors take full responsibility for the validity and final presentation of the publication.

\section{Spectral Theory on Principal Bundles}
\label{sec:spectral_theory}
To compute the exact Laplace-Beltrami spectrum on homogeneous principal bundles, we first need to establish the precise geometric setup that allows us to deform the metric along the fiber.

\subsection{Canonical Variations and the Principal Bundle Structure}

The explicit computation of the Laplace-Beltrami spectrum on homogeneous spaces $G/K$ is classically well-understood when the metric is standard normal homogeneous, meaning it is induced by a bi-invariant metric $Q$ on $G$. However, many geometrically significant families of metrics—such as those on 3-$(\alpha,\delta)$-Sasaki manifolds or Stiefel manifolds—arise as specific deformations of these standard metrics. We first establish the algebraic framework for these deformations.

Let $G$ be a compact, connected Lie group equipped with a bi-invariant metric $Q$, and let $K \subset L \subset G$ be connected, closed subgroups. We consider the orthogonal complement of the Lie algebra $\k$ within $\mathfrak{l}$ with respect to $Q$, denoted by $\h := \k^\perp \cap \mathfrak{l}$. The following lemma relates the commutativity of $\h$ and $\k$ to the normal subgroup structure of $K$.

\begin{lem}\label{lem:algebra_equivalence}
  Under the setup described above, the following conditions are equivalent:
    \begin{enumerate}
        \item[(i)] The subspace $\h$ commutes with $\k$, i.e., $[\k, \h] = \{0\}$.
        \item[(ii)] $K$ is a normal subgroup of $L$ $(K \triangleleft L)$.
    \end{enumerate}
    If these equivalent conditions hold, $\h$ is a Lie subalgebra of $\l$, and the subgroup $L$ factorizes as $L = K \cdot H$ with $[K, H] = \{e\}$, where $H$ is the connected subgroup generated by $\h$.
\end{lem}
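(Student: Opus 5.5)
The plan is to work at the Lie algebra level and then pass to groups using connectedness of $K$ and $L$. The key tool is $\Ad(L)$-invariance of $Q$: restricted to $\l$, $Q$ is $\operatorname{ad}(\l)$-invariant, so for $X,Y,Z\in\l$ we have $Q([X,Y],Z)=-Q(Y,[X,Z])$. Consequently the $Q$-orthogonal complement of an $\operatorname{ad}(\l)$-invariant subspace of $\l$ is again $\operatorname{ad}(\l)$-invariant.

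\emph{(ii)$\Rightarrow$(i).} Suppose $K\triangleleft L$. Since $L$ is connected, this is equivalent to $\k$ being an ideal of $\l$. By the invariance remark, $\h=\k^\perp\cap\l$ is then also an ideal. Hence $[\k,\h]\subset\k\cap\h=\{0\}$.

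\emph{(i)$\Rightarrow$(ii).} Suppose $[\k,\h]=0$. We have $\l=\k\oplus\h$ and $[\k,\k]\subset\k$, so $[\k,\l]\subset\k$, i.e.\ $\k$ is an ideal of $\l$. Because $K$ and $L$ are connected, $\Ad(\exp X)\k=e^{\operatorname{ad}X}\k=\k$ for $X\in\l$. Since $L$ is generated by $\exp(\l)$, it follows that $\Ad(l)\k=\k$ for all $l\in L$. Therefore $lKl^{-1}$ is the connected subgroup with Lie algebra $\k$, which is $K$. So $K\triangleleft L$.

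\emph{Consequences.} Once $\k$ is an ideal, $\h$ is an ideal by invariance of $Q$, and in particular a Lie subalgebra. Let $H$ be the connected subgroup generated by $\h$. Since $[\k,\h]=0$, $\exp$ of elements of $\k$ commutes with $\exp$ of elements of $\h$, and as $K$ and $H$ are generated by these exponentials, $[K,H]=\{e\}$. The product $K\cdot H$ is then a subgroup: it is the image of the homomorphism $K\times H\to L$, $(k,h)\mapsto kh$. Its Lie algebra is $\k+\h=\l$, so it contains a neighbourhood of $e$ in $L$. Since $L$ is connected, $L=K\cdot H$.

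The only delicate point is this last factorization. $H$ need not be closed, so I would avoid relying on closedness of $K\cdot H$. Instead I would use that the subgroup $K\cdot H$ contains $\exp(\l)$-generated neighbourhoods of the identity, and a connected group is generated by any neighbourhood of $e$. The remaining steps are routine.
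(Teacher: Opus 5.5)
Your proof is correct and follows essentially the same route as the paper. For (i)$\Rightarrow$(ii) you use $\l=\k\oplus\h$ to show $\k$ is an ideal, and for (ii)$\Rightarrow$(i) and the subalgebra property of $\h$ you use $\Ad$-invariance of $Q$ to get $[\k,\h]\subset\k\cap\k^\perp=\{0\}$. Your group-level steps are more careful than the paper's, which just asserts that they integrate directly: you pass from ideal to normal subgroup via $\Ad$ and connectedness, and you get $L=K\cdot H$ from a neighbourhood of $e$ without assuming $H$ is closed.
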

\begin{proof}
    (i) $\Rightarrow$ (ii):
    For an arbitrary $X\in \l$, we have the decomposition $X=X_{\k}+X_{\h}\in \k\oplus \h$. Using that $[\k, \h] = \{0\}$, we have $[X,\k]=[X_{\k}+X_{\h},\k]=[X_{\k},\k]\subset \k$. Hence, $\k$ is an ideal in $\mathfrak{l}$, which implies $K \triangleleft L$ as Lie groups.\\
    (ii) $\Rightarrow$ (i): If $K \triangleleft L$, then $\k$ is an ideal in $\mathfrak{l}$, i.e. $[\k, \mathfrak{l}] \subset \k$. Let $X \in \k, Y \in \h$, and $Z \in \k$. As the metric $Q$ is bi-invariant, it satisfies $Q([X, Y], Z) = -Q(Y, [X, Z])$. Since $[X, Z] \in \k$ and $Y \in \h = \k^\perp \cap \mathfrak{l}$, the right-hand side is zero. Therefore, $[X, Y] $ lies in the orthogonal complement of $\k$. As $\k$ is an ideal, $[X,Y]$ also lies in $ \k$ , hence $[X,Y]=0$. 

     Furthermore, if $[\k, \h] = \{0\}$, it follows for any $Y_1, Y_2 \in \h$ and $X \in \k$ that the bi-invariance of $Q$ yields $Q([Y_1, Y_2], X) = Q(Y_1, [Y_2, X]) = 0$. Thus, $\h$ is closed under the commutator, making it a Lie subalgebra. The condition $[\k, \h] = \{0\}$ integrates directly to $[K, H] = \{e\}$ for the connected subgroups, yielding the factorization $L = K \cdot H$. \\
\end{proof}

While this establishes the algebraic mechanism to deform a metric along the subalgebra $\h$, this specific setup has an important geometric consequence: it canonically equips the homogeneous space $G/K$ with the structure of a principal bundle.

\begin{lem}\label{lem:bundle_equivalence}
  Let $G$ be a compact Lie group and $G/L$ be the base of a homogeneous principal $F$-bundle $P \to G/L$. The following conditions are equivalent:
    \begin{enumerate}
        \item[(i)] The natural action of $G$ extends to act transitively on the total space $P$.
        \item[(ii)] There exists a closed normal subgroup $K \triangleleft L$ such that the total space $P$ is $G$-equivariantly diffeomorphic to the homogeneous space $G/K$. Under this identification, the bundle projection becomes the natural submersion $\pi: G/K \to G/L$, and the geometric fiber is isomorphic to the Lie group $L/K \cong F$.
    \end{enumerate}
\end{lem}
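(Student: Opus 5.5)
We prove the two implications separately. Throughout, the $G$-action on the base $G/L$ is lifted to $P$ by bundle automorphisms, i.e.\ $G$ acts on $P$ commuting with the right principal $F$-action and covering left translation on $G/L$.

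\emph{(ii) $\Rightarrow$ (i).} Since $K\triangleleft L$, the quotient $L/K$ is a Lie group, and it acts on $G/K$ from the right by $gK\cdot lK := glK$. This is well defined because $lK=Kl$ and $gkl K = gl(l^{-1}kl)K = glK$. The action is free: if $glK=gK$ then $l\in K$. Its orbits are exactly the fibers $gL/K$ of $\pi$. It commutes with the left $G$-action. Local triviality follows from local sections of the principal $L$-bundle $G\to G/L$. Hence $G/K\to G/L$ is a principal $L/K$-bundle. Transporting this structure along the given equivariant diffeomorphism $P\cong G/K$, the transitive left $G$-action on $G/K$ becomes a transitive $G$-action on $P$ by bundle automorphisms.

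\emph{(i) $\Rightarrow$ (ii).} Fix $p_0\in P$ over the base point $o=eL$, and let $K:=\mathrm{Stab}_G(p_0)$. This is a closed subgroup, and transitivity gives a $G$-equivariant bijection $G/K\to P$, $gK\mapsto g\cdot p_0$. By the standard orbit theorem for smooth transitive actions of compact Lie groups, this bijection is a diffeomorphism. Because the action covers the action on $G/L$, we get $K\subset L$, and the projection becomes $gK\mapsto gL$.

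The key step is to show that $K$ is normal in $L$. For $l\in L$, the point $l\cdot p_0$ lies in the fiber $P_o$, so there is a unique $\phi(l)\in F$ with $l\cdot p_0=p_0\cdot \phi(l)$. Since the $G$-action commutes with the right $F$-action,
\[
l_1l_2\cdot p_0 = l_1\cdot (p_0\,\phi(l_2)) = (l_1\cdot p_0)\,\phi(l_2)=p_0\,\phi(l_1)\phi(l_2),
\]
so $\phi:L\to F$ is a group homomorphism. Its kernel is exactly $K$, so $K\triangleleft L$. The homomorphism $\phi$ is continuous, since $l\mapsto l\cdot p_0$ is smooth and the fiber map $p_0\cdot f\mapsto f$ is a diffeomorphism; hence $\phi$ is a Lie group homomorphism.

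It remains to show that $\phi$ is surjective. Transitivity of $G$ on $P$ implies that every point $p_0 f$ of the fiber $P_o$ equals $g\cdot p_0$ for some $g\in G$. Projecting to the base gives $g\in L$, hence $f=\phi(g)$. So $\phi$ induces an isomorphism of Lie groups $L/K\cong F$. Under this identification, the right $F$-action on $P_o\cong L/K$ corresponds to right multiplication, which matches the bundle structure described in (ii).

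The main obstacle is the normality argument. Its validity depends on the commutation of the $G$-action with the structure group action, so we must state explicitly that "extends" means extending by principal bundle automorphisms. Without this, only $K\subset L$ would follow and not $K\triangleleft L$. The smoothness and surjectivity points are routine. Finally, if one also wants $K$ connected, as in the standing setup of Lemma~\ref{lem:algebra_equivalence}, this is an additional assumption and is not a consequence of the present lemma.
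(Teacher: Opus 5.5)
Your proof is correct and follows essentially the same route as the paper: both realize $K$ as the kernel of the structure homomorphism $\phi\colon L\to F$, and both deduce surjectivity of $\phi$ (hence $L/K\cong F$) from transitivity. The only difference is how $\phi$ is obtained: the paper invokes Wang's description $P=G\times_\phi F$, whereas you construct $\phi$ directly from $l\cdot p_0=p_0\,\phi(l)$. Your version has the advantage of making explicit that normality of $K$ rests on the $G$-action commuting with the principal $F$-action.
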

\begin{proof}
    (ii) $\Rightarrow$ (i): If $P \cong G/K$, the group $G$ clearly acts transitively on the total space by left multiplication. \\
    (i) $\Rightarrow$ (ii): Any homogeneous principal $F$-bundle over $G/L$ can be represented as the associated bundle $P = G \times_\phi F$, defined by a structure homomorphism $\phi: L \to F$ (cf. \cite[Sec. 4]{Wang58}). Take an arbitrary element $b\in F$ together with the neutral element $e\in G$ and consider the respective equivalent class $[e,b]\in P$. Its $G$-orbit is given by the set $\{[a,b]\mid a\in G\}$. As $G$ acts transitively on $P$, there exists an $a\in G$ such that $[a,b]=[e,e_F]$, where $e_F$ is the neutral element in $F.$  By definition of the equivalence relation, this requires $a^{-1} \in L$ and $\phi(a) = b^{-1}$, i.e. $\phi(a^{-1}) = b$. Thus, $\phi$ is surjective. 
    
    The stabilizer of $[e, e_F]$ under the $G$-action consists of all elements $a \in L$ satisfying $\phi(a^{-1})e_F = e_F$, which forces $a \in \ker(\phi)$. Defining $K := \ker(\phi)$ ensures that $K$ is a normal subgroup of $L$ $(K \triangleleft L)$ and identifies the total space as $P \cong G/K$. Since $\phi:L\rightarrow F$ is surjective, the first isomorphism theorem implies that the fiber $F$ is isomorphic to the Lie group $L/K$.
\end{proof}

Combined, Lemmas \ref{lem:algebra_equivalence} and \ref{lem:bundle_equivalence} show that the algebraic condition of commuting subalgebras is equivalent to the geometric structure of a homogeneous principal bundle. This equivalence directly motivates our main setup.

\begin{stand}\label{stand:main_setup}
    Let $G$ be a compact, connected Lie group equipped with a bi-invariant metric $Q$. We assume a $G$-equivariant fibration $\pi: G/K \to G/L$ defined by the inclusion of connected subgroups $K \triangleleft L \subset G$. 
    
    Consequently, the base isotropy factorizes as $L = K \cdot H$, where $H$ is the connected subgroup generated by $\h = \k^\perp \cap \mathfrak{l}$. We further assume that the fiber group $H$ decomposes into a product of mutually commuting, connected subgroups $H = H_1 \times \dots \times H_s$, whose Lie algebras $\h_i$ are mutually orthogonal with respect to $Q$. Let $\mathfrak{m}$ denote the $Q$-orthogonal complement of $\mathfrak{l}$ in $\mathfrak{g}$.
\end{stand}

Based on this standing assumption, we define a multi-parameter family of $G$-invariant metrics on the total space $G/K$.

\begin{dfn}
    The \emph{generalized canonical variation} $g_{\mathbf{t}}$ on the principal bundle $G/K$ is defined by the orthogonal direct sum:
    \begin{equation}
        g_{\mathbf{t}} = t_0 Q|_{\mathfrak{m}} \,\, \oplus \,\, t_1 Q|_{\h_1} \,\, \oplus \,\, \dots \,\, \oplus \,\, t_s Q|_{\h_s},
    \end{equation}
    where $t_0 > 0$ uniformly scales the horizontal space and $t_1, \dots, t_s > 0$ independently scale the mutually orthogonal vertical distributions. As done several times in this article, we abbreviate tuples such as $(t_1,...,t_s)$ using the vector notation $\textbf{t}$.
\end{dfn}

To compute the exact Laplace-Beltrami spectrum for this entire family of non-normal metrics, we exploit the commutativity $[K,H]=\{e\}$ (guaranteed by Standing Assumption \ref{stand:main_setup}) to lift the problem to a larger product group, where the metric becomes naturally reductive.

\begin{thm}[cf. {\cite[Thm. 2.23]{AgricolaHenkel25}}]\label{thm:naturally_reductive_realization}
    Equipped with the generalized canonical variation $g_{\mathbf{t}}$, the total space of the homogeneous principal bundle $(G/K, \, g_{\mathbf{t}})$ is equivariantly isometric to a naturally reductive homogeneous space presented as a quotient of the product group:
    \[ \frac{G \times H_1 \times \dots \times H_s}{K'}, \]
    where the isotropy group $K' = \{(k h_1 \dots h_s, \, h_1, \dots, h_s) \mid k \in K, h_i \in H_i\}$ is diagonally embedded. 
\end{thm}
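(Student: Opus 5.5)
The plan is to write down the diffeomorphism and the reductive decomposition explicitly, choose an $\Ad(K')$-invariant inner product on the complement, check that it pulls back to $g_{\mathbf t}$, and then verify natural reductivity through a bi-invariant-type criterion. Set $\hat G := G\times H_1\times\dots\times H_s$. This group acts on $G/K$ by
\[
(a,h_1,\dots,h_s)\cdot gK := a\,g\,(h_1\cdots h_s)^{-1}K .
\]
This is well defined because $H$ commutes with $K$, so right multiplication by $H$ descends to $G/K$. The action is transitive, since $G$ alone is transitive. The stabilizer of $eK$ consists of tuples with $a(h_1\cdots h_s)^{-1}\in K$, that is, $a=kh_1\cdots h_s$. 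This is exactly $K'$, so $\hat G/K'\cong G/K$ equivariantly. Here $G$ acts through the first factor, and $H$ acts from the right by the fiber (principal) action.

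For the reductive complement, I use that $\mathfrak k'=\{(X+Y_1+\dots+Y_s,Y_1,\dots,Y_s)\}$ with $X\in\k$ and $Y_i\in\h_i$. I equip $\hat{\mathfrak g}$ with the bi-invariant form
\[
\hat Q_{\mathbf t}=\tfrac1{t_0}Q\oplus \lambda_1 Q|_{\h_1}\oplus\dots\oplus\lambda_s Q|_{\h_s},
\]
which is scaled suitably and is still $\Ad(\hat G)$-invariant because the $H_i$ are normal factors. I take $\hat{\mathfrak m}:=\mathfrak k'^{\perp}$ with respect to $\hat Q_{\mathbf t}$. This gives
\[
\hat{\mathfrak m}=\mathfrak m\oplus\bigoplus_i\{(aY,\,bY)\}
\]
with a fixed ratio $a:b$ determined by $t_0$ and $\lambda_i$. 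This complement is automatically $\Ad(K')$-invariant.

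The key computation is to push $\hat{\mathfrak m}$ down to $T_{eK}(G/K)\cong\mathfrak m\oplus\h$. A vector $(Z,W_1,\dots,W_s)$ projects to $Z_{\mathfrak m\oplus\h}-\sum W_i$, because the $H$-part acts by right multiplication with an inverse. On $\mathfrak m$ the projection is the identity, and the norm there is $\frac{1}{t_0}Q$ up to normalization, so I rescale the overall form to get $t_0Q$. On $\h_i$ the vector $(aY,bY)$ maps to $(a-b)Y$ and has norm $(a^2/t_0+\lambda_i b^2)Q(Y,Y)$. Imposing orthogonality to $\mathfrak k'$ gives $a/t_0+\lambda_i b=0$. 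Solving, one finds that the induced metric on $\h_i$ equals $t_iQ$ precisely when
\[
\lambda_i=\frac{1}{t_i-t_0}\cdot\text{const},
\]
which requires $t_i\neq t_0$, or the appropriate sign. This is the expected formula from \cite{AgricolaHenkel25}.

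The main obstacle is exactly this step. When $t_i>t_0$ the form $\hat Q_{\mathbf t}$ is positive definite and normal homogeneous, hence naturally reductive. When $t_i<t_0$, $\lambda_i$ is negative, so $\hat Q_{\mathbf t}$ is only a nondegenerate $\Ad$-invariant form. For that case I would invoke Kostant's criterion instead: a metric obtained as the restriction of a nondegenerate bi-invariant form to the $\hat Q$-orthogonal complement of the isotropy, positive on $\hat{\mathfrak m}$, is naturally reductive. Directly, $\hat Q([X,Y]_{\hat{\mathfrak m}},Z)+\hat Q(Y,[X,Z]_{\hat{\mathfrak m}})=0$ follows from $\Ad$-invariance and $\hat{\mathfrak m}\perp\mathfrak k'$. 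Positivity on $\hat{\mathfrak m}$ follows from the computed norms $t_0$ and $t_i>0$. The case $t_i=t_0$ is handled by dropping that factor $H_i$, since then $g_{\mathbf t}$ is already normal in that direction. Alternatively, one can note that the argument still works with the degenerate choice, absorbing that $H_i$ into the isotropy. Finally, the map is equivariant and isometric at $eK$, so it is an isometry everywhere by $\hat G$-invariance.
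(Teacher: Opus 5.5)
The paper gives no proof of this statement; it imports it from \cite[Thm.~2.23]{AgricolaHenkel25}. Your explicit construction is therefore a self-contained substitute, and its structure is sound:
\begin{itemize}
\item the action $(a,h_1,\dots,h_s)\cdot gK=ag(h_1\cdots h_s)^{-1}K$ is well defined because $[K,H]=\{e\}$, and its stabilizer is exactly $K'$;
\item the $\hat{Q}$-orthogonal complement of $\mathfrak k'$ is $\Ad(K')$-invariant;
\item the identity $\hat{Q}([X,Y]_{\hat{\mathfrak m}},Z)+\hat{Q}(Y,[X,Z]_{\hat{\mathfrak m}})=0$ uses only $\mathrm{ad}$-invariance, so it also covers indefinite $\hat{Q}$.
\end{itemize}
You should add the check that $\hat{Q}$ is nondegenerate on $\mathfrak k'$, since otherwise $\mathfrak k'^{\perp}$ need not be a complement. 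It does hold: on $\k$ the form is $t_0Q$, and on $\{(Y,Y):Y\in\h_i\}$ it is $(t_0+\mu_i)Q$ with $\mu_i$ as below.

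\textbf{Sign error.} Your sign bookkeeping is reversed. Normalize $\hat{Q}=t_0Q\oplus\mu_1Q|_{\h_1}\oplus\dots\oplus\mu_sQ|_{\h_s}$, so your $\lambda_i$ equals $\mu_i/t_0^2$. The $\h_i$-part of $\hat{\mathfrak m}$ is $\{(Y,-\tfrac{t_0}{\mu_i}Y)\}$, it projects to $(1+\tfrac{t_0}{\mu_i})Y$, and the induced metric there is $\frac{t_0\mu_i}{t_0+\mu_i}Q$. In other words,
\[ \frac{1}{t_i}=\frac{1}{t_0}+\frac{1}{\mu_i},\qquad \mu_i=\frac{t_0t_i}{t_0-t_i},\qquad \lambda_i=\frac{t_i}{t_0(t_0-t_i)}. \]
The fibre metric is a ``parallel sum.'' Hence $\hat{Q}$ is positive definite, and the metric is normal homogeneous for $\hat{G}$, exactly when the fibres are \emph{shrunk}, $t_i<t_0$. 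Enlarged fibres, $t_i>t_0$, force $\mu_i<-t_0<0$. Your claim that $t_i>t_0$ gives a positive definite form is therefore false. This does not break the proof, because your indefinite-form argument treats both cases uniformly. Note also $t_0+\mu_i=\frac{t_0^2}{t_0-t_i}\neq 0$, which settles the nondegeneracy on $\mathfrak k'$ mentioned above.

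\textbf{The case $t_i=t_0$.} Dropping $H_i$ gives a presentation over a smaller group, not the quotient of $G\times H_1\times\dots\times H_s$ asserted in the statement. Instead, keep $H_i$ and use the limiting complement $\{(Y,0):Y\in\h_i\}$ (formally $\mu_i=\infty$), which is $\Ad(K')$-invariant. Let $I=\mathfrak g\oplus\bigoplus_{j:\,t_j\neq t_0}\h_j$.
\begin{itemize}
\item Every element of $\hat{\mathfrak m}$ has vanishing component in each such $i$-th $H$-factor, so all brackets of $\hat{\mathfrak m}$ lie in the ideal $I$.
\item The $\mathfrak k'$-component of such a bracket then also lies in $\mathfrak k'\cap I$.
\item On $I$, the form $t_0Q\oplus\bigoplus_{j:\,t_j\neq t_0}\mu_jQ|_{\h_j}$ is $\mathrm{ad}$-invariant, restricts to $g_{\mathbf t}$ on $\hat{\mathfrak m}$, and makes $\hat{\mathfrak m}$ orthogonal to $\mathfrak k'\cap I$.
\end{itemize}
Your identity then yields natural reductivity verbatim.
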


This realization unifies the symmetries of the deformed metric and allows the Laplace-Beltrami operator to be expressed entirely via the generalized Casimir operators of the product group, leading to Theorem \ref{thm:operational_branching}
\subsection{The branching condition}
We now express the sphericality condition of the naturally reductive realization in terms of branching rules. By restricting representations of $G$ to the base isotropy group $L = K \cdot (H_1 \dots H_s)$, the computation of the spectrum reduces to evaluating specific branching multiplicities. 

We first fix our representation-theoretic conventions. Throughout this paper, for any Lie group $G$, the notation $\widehat{G}$ denotes the set of equivalence classes of finite-dimensional, irreducible complex representations of $G$. Whenever $G$ is compact, these representations are naturally assumed to be unitary, equipped with a $G$-invariant inner product. 
\begin{dfn}
    Given a closed subgroup $K \subset G$, a representation $\varrho \in \widehat{G}$ is called \emph{$K$-spherical} if the space of $K$-fixed vectors is non-trivial, i.e., $\dim(V_\varrho^K) > 0$.
\end{dfn}

Since $K$ and $H$ commute, the irreducible representations of $L$ can be identified with tensor products of irreducible representations of $K$ and the factors $H_i$, provided they act trivially on the discrete intersection kernel $K \cap H$.

\begin{thm}\label{thm:operational_branching}
    Let $(G/K, g_{\mathbf{t}})$ be the generalized canonical variation of a homogeneous principal bundle as defined above. A tuple of irreducible representations 
    \[ (\varrho_G, \varrho_1, \dots, \varrho_s) \in \widehat{G} \times \widehat{H}_1 \times \dots \times \widehat{H}_s \]
    contributes to the Laplace-Beltrami spectrum (i.e. is $K'$-spherical) if and only if the dual representations $\varrho_i^*$ appear in the $K$-isotypic component of $\varrho_G$ restricted to the subgroup $L$. That is, the branching multiplicity must satisfy:
    \begin{align}\label{eq:branching_condition}
        \mult\left(1_K \otimes \varrho_1^* \otimes \dots \otimes \varrho_s^*, \quad \varrho_G\myrestriction_{L}\right) > 0,
    \end{align}
    where $1_K$ denotes the trivial representation of $K$.
    
    If this condition is met, the associated exact eigenvalue is given by the formula:
    \begin{equation}\label{eq:eigenvalue_formula}
        \eta = \frac{c_Q(\varrho_G) - \sum_{i=1}^s c_{Q\restriction \h_i}(\varrho_i)}{t_0} + \sum_{i=1}^s \frac{ c_{Q\restriction \h_i}(\varrho_i)}{t_i},
    \end{equation}
    where $c_Q(\varrho)$ denotes the eigenvalue of the Casimir operator associated with the bi-invariant metric $Q$, acting on the representation $\varrho$.
    
    The multiplicity of the eigenvalue $\eta$ corresponding to this tuple is given by:
    \[
        \mult(\eta) = \mult\left(1_K \otimes \varrho_1^* \otimes \dots \otimes \varrho_s^*, \quad \varrho_G\myrestriction_{L}\right)\cdot \dim(\varrho_G)\cdot  \prod_{i=1}^s \dim(\varrho_i).
    \]
\end{thm}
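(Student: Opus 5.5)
We derive the statement from the naturally reductive realization of Theorem \ref{thm:naturally_reductive_realization} and the standard description of the Laplacian on such spaces. Put $\widetilde G := G \times H_1 \times \dots \times H_s$. By Peter--Weyl, $L^2(\widetilde G/K')$ decomposes as $\bigoplus_{\tilde\varrho} V_{\tilde\varrho} \otimes (V_{\tilde\varrho}^*)^{K'}$ over $\tilde\varrho \in \widehat{\widetilde G}$. The irreducible representations of $\widetilde G$ are exactly the outer tensor products $\tilde\varrho = \varrho_G \boxtimes \varrho_1 \boxtimes \dots \boxtimes \varrho_s$. The multiplicity of the corresponding eigenspace is therefore $\dim(\tilde\varrho)\cdot \dim (V_{\tilde\varrho}^{K'})$, with $\dim \tilde\varrho = \dim\varrho_G \prod_i \dim\varrho_i$. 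Duals are harmless here: the $K'$-invariants of $\tilde\varrho^*$ and of $\tilde\varrho$ have the same dimension, and the eigenvalue formula is invariant under passing to the dual. It then suffices to show two things: that $\dim V_{\tilde\varrho}^{K'}$ equals the branching multiplicity in \eqref{eq:branching_condition}, and that the Laplacian acts on the $\tilde\varrho$-isotypic component by the scalar \eqref{eq:eigenvalue_formula}.

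\emph{Invariant vectors.} An element of $K'$ has the form $(k h_1\cdots h_s, h_1,\dots,h_s)$. Since $K'$ is generated by $K\times\{e\}$ and the elements $(h_i, e,\dots,h_i,\dots,e)$, a vector $v \in V_{\varrho_G}\otimes V_{\varrho_1}\otimes\dots\otimes V_{\varrho_s}$ is $K'$-invariant if and only if two conditions hold:
\begin{itemize}
\item[(a)] $v$ is $K$-invariant, and
\item[(b)] $v$ is invariant under each diagonal copy of $H_i$, acting through $\varrho_G|_{H_i}\otimes\varrho_i$.
\end{itemize}
Since $[K,H]=\{e\}$, the group $L$ acts on $V_{\varrho_G}$ and the $K$-fixed subspace $V_{\varrho_G}^K$ is $H$-stable. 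Condition (a) places $v$ in $V_{\varrho_G}^K \otimes V_{\varrho_1}\otimes\dots\otimes V_{\varrho_s}$.

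Condition (b) then asks for the $H$-invariants of $V_{\varrho_G}^K \otimes (V_{\varrho_1}\otimes\dots\otimes V_{\varrho_s})$. These are $\Hom_H(V_{\varrho_1}^*\otimes\dots\otimes V_{\varrho_s}^*,\, V_{\varrho_G}^K)$. The representation $\varrho_1^*\otimes\dots\otimes\varrho_s^*$ is irreducible for $H_1\times\dots\times H_s$, so by Schur's lemma this Hom space has dimension equal to the multiplicity of $1_K\otimes\varrho_1^*\otimes\dots\otimes\varrho_s^*$ in $\varrho_G|_L$. A technical point here is that $H_1\times\dots\times H_s \to H$ and $K\times H\to L$ may have finite kernels. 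I would handle this by noting that $\varrho_G$ restricted to $L$, pulled back to $K\times H_1\times\dots\times H_s$, is trivial on the kernel. Consequently only $L$-representations occur, and the multiplicity computed upstairs agrees with the one computed in $\widehat L$.

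\emph{Eigenvalue.} On a naturally reductive space whose metric comes from an $\Ad$-invariant form $\widetilde Q$ on $\tilde{\mathfrak g}$, the Laplacian on the $\tilde\varrho$-component equals the Casimir $c_{\widetilde Q}(\tilde\varrho)$; this holds when the reductive complement is $\widetilde Q$-orthogonal to $\k'$ (the normal-homogeneous case). The main obstacle is to identify the right $\widetilde Q$ on $\mathfrak g\oplus\h_1\oplus\dots\oplus\h_s$ from the construction in \cite[Thm.~2.23]{AgricolaHenkel25}. I would try
\[
\widetilde Q = \tfrac{1}{t_0}^{-1}\!\!\cdot\, t_0 Q \oplus \lambda_1 Q|_{\h_1}\oplus\dots\oplus\lambda_s Q|_{\h_s}, \qquad \tfrac1{\lambda_i}=\tfrac1{t_i}-\tfrac1{t_0},
\]
so that its first summand is simply $t_0 Q$ on $\mathfrak g$. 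The check is that the induced metric on the complement agrees with $g_{\mathbf t}$: on $\mathfrak m$ one gets $t_0Q$, and on $\h_i$ one gets $\bigl(\tfrac1{t_0}+\tfrac1{\lambda_i}\bigr)^{-1}Q = t_iQ$. If $t_i>t_0$ then $\lambda_i<0$, and I would rely on the cited realization handling this via an indefinite but nondegenerate $\widetilde Q$.

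The Casimir then splits as
\[
\frac{c_Q(\varrho_G)}{t_0}+\sum_i\Bigl(\frac1{t_i}-\frac1{t_0}\Bigr)c_{Q|_{\h_i}}(\varrho_i),
\]
which is exactly \eqref{eq:eigenvalue_formula}.

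As a sanity check that only $\varrho_i$ (and not $\varrho_G|_{H_i}$) enters, note that on $K'$-invariant vectors the Casimir of the $H_i$ factor acting through $\varrho_i$ coincides with the Casimir of $H_i$ acting through $\varrho_G$, by diagonal invariance. This makes the formula independent of the choice of complement.
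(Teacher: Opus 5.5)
Your proposal is correct and follows the paper's route: pass to the naturally reductive realization on $(G\times H_1\times\dots\times H_s)/K'$, restrict $\varrho_G$ to $L$, take $K$-invariants, and identify the diagonal $H$-invariants with $\Hom_H(\varrho_1^*\otimes\dots\otimes\varrho_s^*,V_{\varrho_G}^K)$ via Schur's lemma. Your explicit choice $\widetilde Q=t_0Q\oplus\bigoplus_i\lambda_iQ|_{\h_i}$ with $1/\lambda_i=1/t_i-1/t_0$ correctly spells out the Casimir evaluation that the paper only cites, and the indefinite case $t_i>t_0$ is harmless because $\widetilde Q|_{\k'}$ stays nondegenerate.

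Two small loose ends remain. The first summand in your display for $\widetilde Q$ is garbled; you mean $t_0Q$. For $t_i=t_0$ the form $\widetilde Q$ is undefined, so that case needs a separate one-line argument, e.g.\ continuity in $\mathbf t$ of the Laplacian on each finite-dimensional $G\times H$-isotypic subspace of $L^2(G/K)$.
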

\begin{proof}
    By \cite[Thm. 2.23]{AgricolaHenkel25}, the spectrum of the naturally reductive space $(G \times H)/K'$ is entirely determined by the $K'$-spherical representations of the product group $G \times H$, meaning we must identify those irreducible representations $\varrho_G \otimes \varrho_H$ that contain a non-zero vector fixed under the action of $K' = \{(kh, h) \mid k \in K, h \in H\}$. If such a representation is found, the eigenvalue formula \eqref{eq:eigenvalue_formula} immediately follows from evaluating the generalized Casimir operator. Let us denote the factors of $H$ collectively by $\varrho_H = \varrho_1 \otimes \dots \otimes \varrho_s$.
    
    To find the $K'$-fixed vectors in $V_{\varrho_G} \otimes V_{\varrho_H}$, we proceed in two steps. First, we restrict $\varrho_G$ to the subgroup $L = K \cdot H$. Since $K$ and $H$ commute, this restriction decomposes into irreducible representations of the form $\mu \otimes \nu$, where $\mu \in \widehat{K}$ and $\nu \in \widehat{H}$. For a vector to be invariant under elements of the form $(k, e) \in K'$ (i.e., invariant under $K$ acting solely on the first factor), it must belong to a subspace where $\mu = 1_K$. Let $V_{\varrho_G}^K$ be this $K$-fixed subspace, which carries a residual representation of $H$, say $\sigma = \bigoplus_j m_j \nu_j$.
    
    Second, we require the vector to be invariant under the diagonal action of elements $(h, h) \in K'$. This translates to finding an $H$-invariant vector in the tensor product of the residual representation $\sigma$ and the fiber representation $\varrho_H$, i.e., we need the trivial representation $1_H$ to appear in $\sigma \otimes \varrho_H$.
    
    It is a standard fact from representation theory that for two irreducible representations $\nu$ and $\varrho_H$ of a compact Lie group, the trivial representation appears in their tensor product if and only if they are dual to each other: Explicitly, we identify $V_{\nu} \otimes V_{\varrho_H}$ with the space of linear maps $\Hom(V_{\nu}^*, V_{\varrho_H})$, endowed with the action $\Hom(V_{\nu}^*, V_{\varrho_H})\ni L \mapsto \quad\varrho_H(h) (L(\nu^{-1}(h)\:\cdot\:)$ of $h\in H$. The subspace of $H$-fixed vectors $(V_{\nu} \otimes V_{\varrho_H})^H$ is canonically isomorphic to the space of intertwining operators $\Hom_H(V_{\nu}^*, V_{\varrho_H})$. By Schur's Lemma, this space is non-trivial (specifically, one-dimensional) if and only if $\nu \cong \varrho_H^*$. 
    
    Therefore, the representation $\varrho_G \otimes \varrho_H$ is $K'$-spherical if and only if the dual representation $\varrho_H^*$ appears in the $K$-fixed part of $\varrho_G\myrestriction_{L}$, which is precisely condition \eqref{eq:branching_condition}.
\end{proof}
\begin{rem}
    We highlight three important aspects of Theorem \ref{thm:operational_branching}:
    \begin{enumerate}
        \item 
        The algebraic condition \eqref{eq:branching_condition} reflects the topology of the geometric fiber. By requiring that the tensor product interacts with the trivial representation $1_K$ over the subgroup $L = K \cdot H$, the condition automatically enforces that the product $\varrho_1 \otimes \dots \otimes \varrho_s$ acts trivially on the intersection $K \cap H$. This implicitly guarantees that the chosen fiber representation descends properly to the actual geometric fiber $F \cong H/(H \cap K)$.
        
        \item 
        The simplicity of the condition in Theorem \ref{thm:operational_branching} relies on Schur's lemma. Since we are computing the spectrum of the scalar Laplacian (acting on functions), we are searching for the \emph{trivial} representation $1_H$ in the diagonal tensor product $\sigma \otimes \varrho_H$. This immediately implies $\sigma \cong \varrho_H^*$. However, the underlying sequential branching mechanism is much broader. If one wishes to compute the spectrum of differential operators acting on sections of associated vector bundles (where the fiber group $H$ acts via a non-trivial representation $\tau$), one must instead find $\tau$ within the tensor product $\sigma \otimes \varrho_H$. While this no longer reduces to a duality condition, it remains a purely representation-theoretic task that can be systematically solved using Clebsch-Gordan coefficients (which is particularly straightforward to implement when the fiber is a low-rank group like $\SU(2)$ or $\U(2)$) or the Littlewood-Richardson rule.
        
        \item 
        In the specific case where the structure group $H$ is simple (i.e., a single scaling parameter $t_1)$, our spectral formula recovers the structure of the expression derived by Bettiol et al.\ \cite[Cor. 2.5]{1BLP22}, which has been used in the context of distance spheres in symmetric spaces.
    \end{enumerate}
\end{rem}

\section{Representation theoretic tools and reciprocity}\label{sec:the_tools}
To apply Theorem \ref{thm:operational_branching} to our target manifolds, we require explicit control over the restriction of representations for the classical Lie groups $\SU(n)$, $\SO(n)$, and $\Sp(n)$. The computation of the branching multiplicity $\mult(1_K \otimes \varrho_H^*, \varrho_G\myrestriction_{L})$ can be addressed using classical branching theorems.

However, evaluating these restrictions directly is often combinatorially demanding. In certain cases—most notably for fibrations involving unitary groups—we employ a reciprocity principle. This duality reduces the restriction problem to a tensor product decomposition, allowing us to utilize combinatorial tools such as the Littlewood-Richardson rule.

\begin{prop}[Fundamental Branching Rules, cf. {\cite[Ch. 8.1.1]{Goodman Wallach}}]\label{prop: branching GoWa}
	The irreducible representations of the classical groups satisfy the following fundamental branching rules:
	\begin{itemize}
		\item[\textbf{a)}] For $\GL(n)$: An irreducible, finite dimensional representation $\varrho_{\GL(n)}(z_{1},..,z_{n})$ decomposes as follows into irreducible representations of
		\begin{itemize}
			\item the subgroup $\GL(n-1) \times \GL(1)$ (embedded as block diagonal matrices):
			\begin{align*}
			\bigoplus_{z_{1}\geq x_{1}\geq z_{2}\geq \dots \geq x_{n-1}\geq z_{n}} \varrho_{\GL(n-1)}(x_{1},...,x_{n-1})\otimes \varrho_{\GL(1)}\left(\sum_{i=1}^{n}z_{i}-\sum_{i=1}^{n-1}x_{i}\right),
			\end{align*}
			\item the subgroup $\SL(n)\times\GL(1)$:
			$$ \varrho_{\SL(n)}(z_{1}-z_{n},...,z_{n-1}-z_{n})\otimes \varrho_{\GL(1)}\left(\sum_{i=1}^{n}z_{i}\right).$$
		\end{itemize}
		
        \item[\textbf{b)}] For $\Spin(2n+1) \downarrow \Spin(2n)$: The branching is multiplicity-free. An irreducible representation $\varrho_{\Spin(2n)}(x_1, \dots, x_n)$ occurs in the decomposition of $\varrho_{\Spin(2n+1)}(z_1, \dots, z_n)$ if and only if their highest weights satisfy the interlacing condition
		\begin{align*}
			z_1 \ge x_1 \ge z_2 \ge \dots \ge z_{n-1} \ge x_{n-1} \ge z_n \ge |x_n|.
		\end{align*}
		
		\item[\textbf{c)}] For $\Spin(2n) \downarrow \Spin(2n-1)$: The branching is multiplicity-free. An irreducible representation $\varrho_{\Spin(2n-1)}(x_1, \dots, x_{n-1})$ occurs in the decomposition of\linebreak $\varrho_{\Spin(2n)}(z_1, \dots, z_n)$ if and only if their highest weights satisfy the interlacing condition
		\begin{align*}
			z_1 \ge x_1 \ge z_2 \ge \dots \ge z_{n-1} \ge x_{n-1} \ge |z_n|.
		\end{align*}
	\end{itemize}
	For the spin groups, the representations must also be of the same type (i.e., both integral or both half-integral).
\end{prop}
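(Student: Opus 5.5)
These are classical results, so the plan is to reduce each branching law to the Gelfand–Tsetlin interlacing patterns and then verify them by characters. I would prove part a) first, since b) and c) follow the same pattern with orthogonal characters.

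For the first bullet of a), I would use the Weyl character formula. The character of $\varrho_{\GL(n)}(z)$ is the Schur polynomial $s_z(x_1,\dots,x_n)$, which is a ratio of alternants. The restriction to $\GL(n-1)\times\GL(1)$ amounts to specializing the last variable $x_n$ as the $\GL(1)$ variable. The key identity is the branching rule for Schur polynomials,
\[ s_z(x_1,\dots,x_n)=\sum_{x\prec z} s_x(x_1,\dots,x_{n-1})\,x_n^{|z|-|x|}, \]
where $x\prec z$ denotes interlacing. I would prove it either combinatorially, via semistandard tableaux (the entries equal to $n$ form a horizontal strip, and removing it leaves a tableau of shape $x$ interlacing $z$), or by expanding the determinant. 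Since characters determine representations, this gives the decomposition together with the $\GL(1)$ weight $\sum z_i-\sum x_i$.

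For the second bullet of a), the centre acts by $\det^{z_n}$ after shifting the highest weight. Restricting to $\SL(n)$ only sees the weight modulo the vector $(1,\dots,1)$, which gives $(z_1-z_n,\dots,z_{n-1}-z_n)$. The scalar matrices act through the total degree $\sum z_i$.

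For b) and c), I would again compare Weyl characters of $\Spin(2n+1)$ and $\Spin(2n)$ (and of $\Spin(2n)$ and $\Spin(2n-1)$) on a common maximal torus. The step is to show that the restricted Weyl numerator divided by the smaller group's denominator telescopes into a sum over interlacing weights. This is the classical Zhelobenko/Murnaghan computation. The presence of $|x_n|$, respectively $|z_n|$, comes from the $D_n$ Weyl group acting by sign changes in pairs. The parity constraint, meaning both integral or both half-integral, holds because the restriction preserves the action of the kernel of $\Spin\to\SO$, that is, the central $\pm 1$.

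The main obstacle is the orthogonal determinant manipulation in b)/c). In practice I would simply cite Goodman–Wallach, Thm. 8.1.3–8.1.4, as the paper does, since this proposition is standard background rather than new content.
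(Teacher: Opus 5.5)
Your proposal is correct and follows the paper's route. The paper gives no argument beyond the citation of Goodman--Wallach, Ch.~8, which you also fall back on, and your character-theoretic outline (horizontal strips for the Schur branching in part a, comparison of Weyl numerators for parts b and c) is the standard proof behind that reference. If you do write the sketch out, make three points explicit: the twist by a power of $\det$ needed for non-polynomial weights in part a; that $\SL(n)\times\GL(1)$ acts through the surjection $(A,c)\mapsto cA$ rather than sitting inside $\GL(n)$ as a subgroup; and that in part c the ranks differ, so you restrict the $D_n$ character to a codimension-one subtorus instead of working on a common maximal torus.
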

For complex manifolds such as the Type A 3-$(\alpha,\delta)$-Sasaki manifolds and the complex Stiefel manifolds, the structure group $L$ embeds block-diagonally. A direct analysis of this restriction is complicated by the special unitary condition, which couples the determinants of the blocks. To overcome this, we lift the representations from $\SU(N)$ to $\U(N)$ and utilize a reciprocity relation between restriction multiplicities and tensor product multiplicities.\\
In this article, we identify a partition $\lambda = (\lambda_1, \dots, \lambda_k)$ with a dominant integral weight. For any general linear group $\GL(N)$ with rank $N \ge k$, we denote by $\varrho_{\GL(N)}(\lambda)$ the irreducible representation whose highest weight is given by appending $N-k$ zeros to the partition:
\[
\lambda \equiv (\lambda_1, \dots, \lambda_k, \underbrace{0, \dots, 0}_{N-k}).
\]
This convention allows us to smoothly relate representations of groups of different ranks, as required by the following reciprocity relation.

\begin{lem}[Reciprocity of multiplicities]\label{lem:reciprocity}
    Let $\mu, \nu, \lambda$ be partitions with depths at most $m$, $n$, and $m+n$, respectively. The following identity relates the branching multiplicity to the tensor product multiplicity:
    \begin{align*}
    &\mathrm{mult}
    \left(
        \varrho_{\GL(m)}(\mu) \otimes \varrho_{\GL(n)}(\nu), \quad
        \varrho_{\GL(m+n)}(\lambda)\myrestriction_{\GL(m)\times\GL(n)}
    \right)\\
    &=
    \mathrm{mult}
    \big(
        \varrho_{\GL(m+n)}(\lambda), \quad
        \varrho_{\GL(m+n)}(\mu) \otimes \varrho_{\GL(m+n)}(\nu)
    \big).
    \end{align*}
\end{lem}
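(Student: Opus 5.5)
Both multiplicities equal the same Littlewood–Richardson coefficient $c^{\lambda}_{\mu\nu}$, and we prove this by computing each side with symmetric functions. The bridge is the Schur polynomials: for a partition $\lambda$ of depth at most $N$, the character of $\varrho_{\GL(N)}(\lambda)$ on the diagonal torus $\operatorname{diag}(x_1,\dots,x_N)$ is $s_\lambda(x_1,\dots,x_N)$ (Weyl's character formula / bialternant). Partitions of depth greater than $N$ give $s_\lambda(x_1,\dots,x_N)=0$. We also use that characters of polynomial representations determine them, since Schur polynomials of depth at most $N$ are linearly independent in $N$ variables.

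\textbf{Right-hand side.} The character of $\varrho_{\GL(m+n)}(\mu)\otimes\varrho_{\GL(m+n)}(\nu)$ is $s_\mu s_\nu$ in $m+n$ variables. By the Littlewood–Richardson rule, $s_\mu s_\nu=\sum_\kappa c^{\kappa}_{\mu\nu}s_\kappa$ in the ring of symmetric functions. Specializing to $m+n$ variables kills only the terms with depth of $\kappa$ greater than $m+n$. Since $\lambda$ has depth at most $m+n$, the coefficient of $s_\lambda$ survives unchanged, so the right-hand multiplicity is $c^{\lambda}_{\mu\nu}$.

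\textbf{Left-hand side.} Restricting to $\GL(m)\times\GL(n)$ means splitting the variables as $(x;y)=(x_1,\dots,x_m;y_1,\dots,y_n)$. We then need the expansion of $s_\lambda(x,y)$ in products $s_\alpha(x)s_\beta(y)$. The identity
\[
s_\lambda(x,y)=\sum_{\alpha,\beta}c^{\lambda}_{\alpha\beta}\,s_\alpha(x)s_\beta(y)
\]
holds in infinitely many variables: it is the coproduct formula for Schur functions. It can be derived from skew Schur functions, $s_\lambda(x,y)=\sum_\alpha s_\alpha(x)s_{\lambda/\alpha}(y)$ with $s_{\lambda/\alpha}=\sum_\beta c^\lambda_{\alpha\beta}s_\beta$. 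Alternatively, it follows by Hall-inner-product adjointness: comultiplication is adjoint to multiplication and the $s_\lambda$ are orthonormal. Specializing to $m$ and $n$ variables removes exactly the terms with depth of $\alpha$ greater than $m$ or depth of $\beta$ greater than $n$. Because $\mu$ and $\nu$ have depths at most $m$ and $n$, the term $s_\mu(x)s_\nu(y)$ keeps coefficient $c^\lambda_{\mu\nu}$. Linear independence of the surviving products, which are the characters of distinct irreducibles $\varrho_{\GL(m)}(\alpha)\otimes\varrho_{\GL(n)}(\beta)$, identifies this coefficient as the left-hand multiplicity.

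\textbf{Main obstacle.} The delicate step is the variable-truncation bookkeeping: we must check that passing from the symmetric-function identities to finitely many variables does not merge or kill the relevant terms. The depth hypotheses in the statement are exactly what is needed. The remaining inputs (coproduct formula, LR rule, character theory) are classical and can be cited, e.g. from Goodman–Wallach or Macdonald.
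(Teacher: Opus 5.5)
Your proof is correct, and it takes the same approach as the paper. The paper's proof only asserts that both multiplicities are governed by Littlewood--Richardson coefficients and cites Goodman--Wallach for the rest; your argument fills this in. You identify both sides with $c^{\lambda}_{\mu\nu}$, using the Littlewood--Richardson expansion of $s_\mu s_\nu$ on one side and the coproduct formula for $s_\lambda(x,y)$ on the other. The depth hypotheses correctly ensure that restricting to finitely many variables neither kills nor merges the relevant terms.
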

\begin{proof}
This duality is a consequence of the properties of Littlewood-Richardson coefficients and can be found, for instance, in \cite[Thm. 9.2.3]{Goodman Wallach}. It implies that the branching rule for $\GL(m+n) \downarrow \GL(m) \times \GL(n)$ is governed by the same combinatorial coefficients as the tensor product of representations of $\GL(m+n)$.
\end{proof}

The reciprocity lemma reduces our spectral conditions to the decomposition of tensor products of $\GL(N)$ representations. The following theorem explicitly resolves the specific tensor product structure that will repeatedly arise in our computations.

\begin{prop}[A tensor product decomposition]\label{prop: tensor product decomposition}
    Assume $n\ge 3$, and let $a\ge b$ with $a+b=2k$ for non-negative integers $a,b,k$.
    Let $\mu$ and $\nu$ be the partitions
    \[
    \mu=(a, b),\qquad \nu=(k^{n-1}).
    \]
    Then the tensor product of representations of $\GL(n+1)$,
    \[
    \varrho_{\GL(n+1)}(\mu) \otimes \varrho_{\GL(n+1)}(\nu),
    \]
    is multiplicity free. The irreducible representation $\varrho_{\GL(n+1)}(\lambda)\in \hat{\GL(n+1)}$ appears in the decomposition if and only if
    \[
    \lambda=(k+i_1,k+i_2,k^{n-3},i_3,i_4)
    \]
    with $i_1+i_2+i_3+i_4=2k$ and
    \begin{align*}
    a-k & \le i_1 \le a, \\
    a-i_1 & \le i_3 \le k,\\
    0 & \le i_2 \le a-i_3, \\
    0 & \le i_4 \le a-i_1.
    \end{align*}
\end{prop}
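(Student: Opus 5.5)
We compute the multiplicities with the Littlewood--Richardson rule, which is legitimate here since both factors are polynomial representations of $\GL(n+1)$. By commutativity of the tensor product, the multiplicity of $\varrho_{\GL(n+1)}(\lambda)$ in $\varrho_{\GL(n+1)}(\nu)\otimes\varrho_{\GL(n+1)}(\mu)$ equals the number of Littlewood--Richardson tableaux of skew shape $\lambda/\nu$ and content $\mu=(a,b)$. Such a tableau has $a$ entries equal to $1$ and $b$ entries equal to $2$. It must be semistandard: rows weakly increase and columns strictly increase. Its reverse reading word, read right to left and top to bottom, must be a lattice word. The plan is to show that the shape of $\lambda$ is forced to be of the stated form, and that for each admissible $\lambda$ the filling is uniquely determined by the counts. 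This gives both the description of the constituents and multiplicity-freeness.

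\emph{Step 1: the shape of $\lambda$.} We need $\nu=(k^{n-1})\subseteq\lambda$, where $\lambda$ has at most $n+1$ rows. So $\lambda/\nu$ splits into two parts. The first part lies in rows $1,\dots,n-1$ to the right of column $k$, with row lengths $\lambda_i-k$. The second part consists of rows $n$ and $n+1$ inside columns $1,\dots,k$, with lengths $\lambda_n,\lambda_{n+1}\le k$. With only two symbols available, column strictness forces every column of $\lambda/\nu$ to contain at most two boxes. Any box of the right part in row $j\ge 3$ would lie below boxes of rows $1,2$ in the same column, giving three boxes in that column. Hence $\lambda_3=\dots=\lambda_{n-1}=k$. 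Writing $\lambda_1=k+i_1$, $\lambda_2=k+i_2$, $\lambda_n=i_3$ and $\lambda_{n+1}=i_4$ gives the stated form. The size count $|\lambda|=|\mu|+|\nu|$ yields $i_1+i_2+i_3+i_4=a+b=2k$. The bound $i_3\le k$ is automatic here, and $n\ge 3$ is used so that these two parts of the shape do not overlap.

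\emph{Step 2: the filling is forced.} The lattice condition forces row $1$ to consist only of $1$'s. Every box of row $2$ in the right part lies below a box of row $1$, because $i_2\le i_1$; hence row $2$ consists only of $2$'s. In row $n$, let $y$ denote the number of $1$'s, which come first by weak row increase, followed by $i_3-y$ twos. Each box of row $n+1$ lies below a box of row $n$, so it must be a $2$ sitting below a $1$. This forces row $n+1$ to be all $2$'s, with $i_4\le y$. The content condition then forces $y=a-i_1$, so the tableau is completely determined by $\lambda$, and every multiplicity is at most $1$.

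\emph{Step 3: translate the constraints into inequalities.} The remaining constraints on $\lambda$ are the following.
\begin{itemize}
\item[(i)] From $0\le y\le i_3$ we get $a-i_1\le i_3$. Together with $i_3\le k$, this gives $a-k\le i_1$. The inequality $i_1\le a$ is just $y\ge 0$.
\item[(ii)] From $i_4\le y$ we get $i_4\le a-i_1$.
\item[(iii)] The lattice condition along the reading word has to be checked at its critical points. After rows $1$ and $2$ it requires $i_2\le i_1$. After the $2$'s of row $n$, which are read first when going right to left, it requires $i_2+i_3-y\le i_1$, that is, $i_2\le a-i_3$. At the end it requires $b\le a$, which holds by hypothesis.
\end{itemize}
Collecting (i)--(iii) gives exactly the stated inequalities. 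Conversely, for any such $(i_1,\dots,i_4)$ the described filling is a valid Littlewood--Richardson tableau.

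The main obstacle is the bookkeeping in Step 3. One must verify the lattice condition at every prefix of the reading word, not only at the ends of rows, and check that no constraint is missed or redundant. The worst prefix occurs right after the $2$'s of row $n$ have been read. This is where the inequality $i_2\le a-i_3$ originates.
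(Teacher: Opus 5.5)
Your proof is correct and is essentially the paper's argument. Both count Littlewood--Richardson fillings of $\lambda/\nu$ with content $(a,b)$: the paper places the $1$'s and then the $2$'s as horizontal strips under the Yamanouchi condition, while you first bound column heights and then read off the forced filling, and both arrive at the same inequalities with the critical lattice prefix just after the $2$'s of row $n$. (One small slip: for $n=3$ the bound $\lambda_n\le k$ does not follow from $\lambda_3=\dots=\lambda_{n-1}=k$, since that range is empty, but your same three-boxes-in-column-$k+1$ argument gives it immediately.)
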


\begin{proof}
From the Littlewood-Richardson rule we know that the multiplicity
\[
\mathrm{mult}
\big(
\varrho_{\GL(n+1)}(\lambda), 
\varrho_{\GL(n+1)}(\mu) \otimes \varrho_{\GL(n+1)}(\nu)
\big)
\]
is the number of ways the 
Young diagram for $\lambda$ can be obtained from the 
Young diagram for $\nu$ by a strict $\mu$-expansion. 
Recall that an $(a,b)$-expansion of a Young diagram is obtained by:
\begin{enumerate}
\item First adding $a$ boxes, according to Pieri's rule (not two boxes in the same column), and putting the integer 1 in each of these boxes.
\item Next, adding $b$, according to Pieri's rule, boxes with the integer 2.
\end{enumerate} 
This expansion is said to be strict if, when the integers 1 and 2 in the boxes are listed from right to left, starting with the top row and going down, we never encounter more 2s than 1s 
(this is the ``Yamanouchi word'' condition, see \cite[p. 456]{Fulton_Harris}).

For step (1) we must choose $i_1$ boxes, with $0\le i_1\le a$, to put in the first row, and the other $a - i_1$ boxes will go to the $n$-th row. 
To obtain a Young diagram it is required $a - i_1\le k$, that means
\[
a-k\le i_1\le a.
\]

To perform step (2) we must choose $i_2, j, i_4\ge 0$ boxes,
 with $i_2+j+i_4=b$, to add in rows 2, $n$ and $n+1$ respectively (by the strict condition,
 we cannot put boxes labeled with 2 in the 1st row).
The restrictions we have for $i_2, j, i_4\ge 0$ are:
\begin{enumerate}
\item[(i)] $i_2+j\le i_1$ by the strict condition (it follows $i_2\le i_1)$.
\item[(ii)] $a-i_1+j\le k$ to obtain a Young diagram.
\item[(iii)] $i_4\le a-i_1$ to avoid placing two boxes in the same column 
(it follows $i_4\le a-i_1+j)$.
\end{enumerate}
If we call $i_3=a-i_1+j$ (the number of boxes in row $n)$, 
we end up with 
\[
\lambda=(k+i_1,k+i_2,k^{n-3},i_3,i_4),
\] 
condition $j\ge 0$ is $a-i_1\le i_3$, 
condition $i_2+j+i_4=b$ is $i_1+i_2+i_3+i_4=a+b=2k$, 
and the previous three conditions become
\begin{enumerate}
\item[(i)] $i_2\le a-i_3$.
\item[(ii)] $i_3\le k$.
\item[(iii)] $i_4\le a-i_1$.
\end{enumerate}
This completes the proof.
\end{proof}
\section{The Spectrum of Positive Homogeneous 3-$(\alpha,\delta)$-Sasaki Manifolds}

The class of 3-$(\alpha,\delta)$-Sasaki manifolds, introduced by Agricola and Dileo \cite{AgricolaDileo20}, is a family of almost 3-contact metric manifolds generalizing classical 3-Sasaki geometry. We recall the central definition:

\begin{dfn}[\cite{AgricolaDileo20}]
    An \emph{almost 3-contact metric manifold} $(M, \varphi_i, \xi_i, \eta_i, g)$ of dimension $4d+3$ is called a \emph{3-$(\alpha,\delta)$-Sasaki manifold} if it satisfies
    \[
        d\eta_i = 2\alpha\Phi_i + 2(\alpha-\delta)\eta_j \wedge \eta_k
    \]
    for all triples $(i,j,k)$ obtained by cyclically permuting $(1,2,3)$, where $\alpha \neq 0$ and $\delta$ are real constants, and $\Phi_i(X,Y) = g(X, \varphi_i Y)$ are the fundamental 2-forms.
\end{dfn}

These manifolds are characterized by the fact that they admit a canonical metric connection with totally skew torsion. A comprehensive description of their geometric properties can be found in \cite{AgricolaDileoStecker21,AgricolaDileo20}. 

As detailed in \cite{AgricolaDileoStecker21}, every positive $(\alpha\delta > 0)$ homogeneous 3-$(\alpha,\delta)$-Sasaki manifold arises as the total space of a Riemannian submersion over a quaternionic Kähler manifold (a Wolf space). See also \cite{GoertschesRoschigStecker23} for a recent algebraic revisiting of the underlying 3-Sasaki classification. The vertical distribution is spanned by the three Reeb vector fields $\xi_i$. Consistent with our framework in Section \ref{sec:spectral_theory}, the geometric fiber $F$ is a space form of constant positive curvature that carries the structure of a compact Lie group. Specifically, $F \cong H / (H \cap K)$, where the structure group is $H = \SU(2)$ and the fiber is locally isometric to either $\SU(2)$ or $\SO(3)$. We note that simply connected, positive 3-$(\alpha,\delta)$-Sasaki manifolds are necessarily Spin manifolds.

By applying the general variation framework, these spaces $M = G/K$ can be realized algebraically as naturally reductive spaces of the form $(G \times H)/K'$, where $H = \SU(2)$ \cite{AgricolaHenkel25}.

The parameters $(\alpha, \delta)$ control the curvature properties of the manifold. The metric $g$ is Einstein for exactly two specific choices of parameters:

\begin{prop}[\cite{AgricolaDileo20}]\label{prop:sasaki_einstein}
    A 3-$(\alpha,\delta)$-Sasaki manifold of dimension $4d+3$ is Riemannian Einstein if and only if $\delta = \alpha$ or $\delta = (2d+3)\alpha$. The respective Einstein constants $\Lambda$, satisfying $\Ric = \Lambda g$, evaluate to:
    \begin{enumerate}
        \item For $\delta = \alpha$ (the classical 3-$\alpha$-Sasaki metric):
        \[ \Lambda_1 = 2\alpha^2(2d+1). \]
        \item For $\delta = (2d+3)\alpha$ (the second, or squashed, Einstein metric):
        \[ \Lambda_2 = 2\alpha^2(4d^2+14d+9). \]
    \end{enumerate}
    In the 7-dimensional case $(d=1)$, the second Einstein metric is induced by a proper nearly parallel $G_2$-structure on the manifold. 

    The scalar curvature of a $3$-$(\alpha,\delta)$-Sasaki manifold is given by $$\scal=2(8d(d+2)\alpha\delta-6d\alpha^2+3\delta^2).$$
\end{prop}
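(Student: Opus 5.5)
The plan is a direct curvature computation with the canonical connection $\nabla$ with skew torsion $T$ that every 3-$(\alpha,\delta)$-Sasaki manifold carries \cite{AgricolaDileo20}. The first step is to record the standard identities. The torsion is
\[
T=2\alpha\sum_i \eta_i\wedge\Phi_i^H+2(\delta-4\alpha)\,\eta_{123},
\]
where $\Phi_i^H$ is the horizontal part of $\Phi_i$. The Levi-Civita Ricci tensor is then
\[
\Ric^g=\Ric^\nabla+\tfrac14 S,\qquad S(X,Y)=\textstyle\sum_j g(T(X,e_j),T(Y,e_j)),
\]
where $\Ric^\nabla$ is symmetric because $\delta T=0$, which is the case since $\nabla T=0$. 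With these in hand, I would evaluate both $\Ric^g$ and $S$ separately on the vertical distribution $\mathcal V=\mathrm{span}\{\xi_i\}$ and on the horizontal distribution $\mathcal H$. Mixed terms vanish by the $\mathrm{Sp}(d)\mathrm{Sp}(1)$-invariance of the structure.

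For the vertical part, I would avoid $\nabla$ and use the O'Neill formulas for the canonical submersion onto the quaternionic Kähler base. The fibres are totally geodesic 3-dimensional spheres, and by $d\eta_i$ they have constant curvature $\delta^2$, so the fibre Ricci curvature is $2\delta^2$. The $A$-tensor is $A_XY=-\alpha\sum_i\Phi_i(X,Y)\xi_i$, which gives $|A\xi_i|^2=4d\alpha^2$. The vertical Ricci value is therefore
\[
\Ric(\xi_i,\xi_i)=2\delta^2+4d\alpha^2 .
\]
For the horizontal part, O'Neill's formula reads
\[
\Ric(X,X)=\Ric^{B}(X,X)-2\textstyle\sum_j|A_X e_j|^2 .
\]
The base is quaternionic Kähler of dimension $4d$, and its scalar curvature is fixed by $\alpha$ and $\delta$ through the Wolf-space normalization, which gives $\Ric^B=4\alpha\delta(d+2)\,g$. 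One also has $\sum_j|A_Xe_j|^2=3\alpha^2$ for unit $X$. Combining these yields
\[
\Ric|_{\mathcal H}=\bigl(4\alpha\delta(d+2)-6\alpha^2\bigr)g .
\]
The hardest step is pinning down this base normalization (the factor $4\alpha\delta(d+2)$). I would fix it by checking against a known case: the round $S^{4d+3}$ with $\alpha=\delta=1$ must have $\Ric=(4d+2)g$, and indeed $8(d+2)-6=8d+10$ fails, so the constants need to be recalibrated. This consistency check against the round sphere will be used to correct the $A$-tensor normalization (it gives $\sum|A_Xe_j|^2=3\alpha^2$ together with $\Ric^B=(4d+8)\alpha\delta$ only up to scaling) before proceeding.

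Assuming the corrected constants, set $\lambda_V=2\delta^2+4d\alpha^2$ and let $\lambda_H$ be the horizontal value. The Einstein condition is $\lambda_V=\lambda_H$. This is a quadratic in $\delta/\alpha$, and I expect its roots to be $\delta=\alpha$ and $\delta=(2d+3)\alpha$. Substituting these roots back gives $\Lambda_1$ and $\Lambda_2$; for instance, $\lambda_V$ at $\delta=\alpha$ gives $2\alpha^2(2d+1)$. The scalar curvature is then
\[
\scal=3\lambda_V+4d\lambda_H ,
\]
which should reproduce $2(8d(d+2)\alpha\delta-6d\alpha^2+3\delta^2)$. This formula in turn forces $\lambda_H=4(d+2)\alpha\delta-6\alpha^2+\ldots$ and serves as the consistency check.

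Finally, for $d=1$ I would verify that the second Einstein metric comes from a nearly parallel $G_2$-structure. I would take
\[
\varphi=\eta_{123}+c\textstyle\sum_i\eta_i\wedge\Phi_i^H
\]
with suitably chosen signs and a scaling constant $c$. Using the structure equations for $d\eta_i$ and $d\Phi_i$, I would check that $d\varphi=\tau_0\ast\varphi$ holds exactly when $\delta=5\alpha$. The alternative is to cite \cite{AgricolaDileo20} for this part.
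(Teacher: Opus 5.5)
The paper gives no proof of this proposition; it takes it from \cite{AgricolaDileo20}. Your direct computation therefore has to stand on its own, and as written it does not, because the horizontal Ricci eigenvalue is never established.

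The constants you first wrote down are in fact correct, and your sanity check passes. At $\alpha=\delta=1$ you get $4(d+2)\alpha\delta-6\alpha^2=4d+8-6=4d+2$, which is exactly the Ricci constant of the unit $S^{4d+3}$; your $8(d+2)-6$ is an arithmetic slip. Having wrongly concluded that the constants must be ``recalibrated'', you leave $\lambda_H=4(d+2)\alpha\delta-6\alpha^2+\ldots$ open and let the scalar curvature formula force it. That is circular, since $\scal=2(8d(d+2)\alpha\delta-6d\alpha^2+3\delta^2)$ is part of the claim. Without a determined $\lambda_H$ you cannot derive the quadratic, its roots (which you only ``expect''), or $\Lambda_2$.

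The repair is to keep $\lambda_H=4(d+2)\alpha\delta-6\alpha^2$ and give a real argument for $\Ric^B=4(d+2)\alpha\delta\,g_B$; ``Wolf-space normalization'' is not one.
\begin{itemize}
\item For $\alpha\delta>0$ (the case used in this paper), the manifold is an $\mathcal H$-homothetic deformation $g=a\,g_0|_{\mathcal H}+c^2g_0|_{\mathcal V}$ of a 3-Sasakian metric $g_0$, with $\alpha=c/a$ and $\delta=1/c$. The base metric becomes $a\,g_B^0$, so $\Ric^B=4(d+2)g_B^0=4(d+2)\alpha\delta\,(a\,g_B^0)$.
\item In general, cite from \cite{AgricolaDileo20} that the local leaf space is quaternionic K\"ahler with scalar curvature $16d(d+2)\alpha\delta$.
\end{itemize}
Then $\lambda_V-\lambda_H=2(\delta-\alpha)(\delta-(2d+3)\alpha)$. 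This gives both Einstein metrics, $\Lambda_1$, and $\Lambda_2=\lambda_V|_{\delta=(2d+3)\alpha}=2\alpha^2(4d^2+14d+9)$. The identity $\scal=3\lambda_V+4d\lambda_H$ then derives the stated formula rather than merely checking it.

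Two smaller points.
\begin{itemize}
\item The vanishing of the mixed terms $\Ric(X,\xi_i)$ is not by itself a consequence of ``$\mathrm{Sp}(d)\mathrm{Sp}(1)$-invariance of the structure'', because the Levi-Civita curvature is not a priori pointwise invariant under the structure group. With totally geodesic fibres, O'Neill's mixed term is, up to sign, $g((\check\delta A)X,U)$, and it must be shown to vanish. Your otherwise unused first paragraph does this. $S$ is block-diagonal by the shape of $T$, and $\Ric^\nabla(X,U)=\sum_r g(R^\nabla(\xi_r,X)U,\xi_r)$ vanishes: $\nabla$ preserves $\mathcal V$ with connection forms that are constant combinations of the $\eta_i$. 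So its curvature on $\mathcal V$ involves only $d\eta_k$ and $\eta_a\wedge\eta_b$, and both vanish on pairs $(\xi_r,X)$.
\item In the $G_2$ part the constant $c$ is not free. The form $\varphi$ must induce $g$, which forces $c=\pm1$; after that, your check that $d\varphi=\tau_0\ast\varphi$ holds exactly for $\delta=5\alpha$ goes through.
\end{itemize}
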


The metrics of these manifolds form a 2-parameter family. In the explicit Lie algebraic construction of homogeneous 3-$(\alpha, \delta)$-Sasaki manifolds \cite[Thm.\ 3.1.1]{AgricolaDileoStecker21}, the Riemannian metric is expressed directly as a scaled sum of the Killing form on the horizontal and vertical subspaces. Translating this geometric structure to our setting, the metric $g$ corresponds exactly to a canonical variation of the bi-invariant metric $Q$. Specifically, the parameters $(\alpha, \delta)$ dictate the scaling factors via $t_0 = (2\alpha\delta)^{-1}$ and $t_1 = \delta^{-2}$, yielding the invariant metric $g = t_0 Q|_{\mathfrak{m}} \oplus t_1 Q|_{\mathfrak{su}(2)}$. Substituting these parameters into Theorem \ref{thm:operational_branching}, the Laplace-Beltrami eigenvalues for any such manifold are given explicitly by:
\begin{align}\label{eq:sasaki_spectrum_formula}
	\eta(\varrho_G, \varrho_H) &= 2\alpha\delta \left( c_{Q}(\varrho_{G}) - c_{Q\myrestriction\su(2)}(\varrho_{H}) \right) + \delta^2 c_{Q\myrestriction\su(2)}(\varrho_{H}) \nonumber \\
    &= 2\alpha\delta\cdot c_{Q}(\varrho_{G}) + \delta(\delta-2\alpha)\cdot c_{Q\myrestriction\su(2)}(\varrho_{H}).
\end{align}
Here, $\varrho_G \in \widehat{G}$ and $\varrho_H \in \widehat{\SU(2)}$ must satisfy the branching condition \eqref{eq:branching_condition}. 

Our goal in the following subsections is to apply our operational branching method to explicitly determine the admissible pairs $(\varrho_G, \varrho_H)$ for the non-exceptional classical series associated with the Lie groups $\SU(n+1)$, $\SO(m)$, and $\Sp(n+1)$, respectively. The relevant groups determining the fibrations are summarized in Table \ref{table:3sasaki_groups}.
\begin{table}[h]
		\centering
	\caption{Classical homogeneous 3-Sasaki data}
	\label{table:3sasaki_groups}
		\begin{tabular}{cccc}
			\toprule
			Type& $K$ & \(K\cdot \SU(2)\) & \(G\) \\
			\midrule
			$A_n$& $S(\U(n-1)\times\U(1)\id_2) $ &\(S(\U(n-1)\times\U(2))\) & \(\SU(n+1)\) \\
			 $B_n, D_n$  & $\SO(m-4)\times \Sp(1) $ &\(\SO(m-4)\times \SO(4)\) & \(\SO(m)\) \\
			$C_n$ & $ \Sp(n-1)$ &\(\Sp(n-1)\times \Sp(1)\) & \(\Sp(n)\) \\
			$C_n$ & $ \Sp(n-1)\times \Z_{2}$ &\(\Sp(n-1)\times \Sp(1)\) & \(\Sp(n)\) \\
			\bottomrule
		\end{tabular}
	\end{table}

		\subsection{$3$-$(\alpha,\delta)$-Sasaki manifolds of type $A_{n}$}
	We begin our analysis with the type $A_n$ family, corresponding to the special unitary group $G = \SU(n+1)$. We consider the standard maximal torus $T \subset \SU(n+1)$ consisting of diagonal matrices. Its Lie algebra $\t$ consists of diagonal matrices $\mathrm{diag}(ia_1, \dots, ia_{n+1})$ with $\sum_{k=1}^{n+1} a_k = 0$. The dual space $\t^*$ is spanned by the functionals $\lambda_k$, where $\lambda_k$ projects onto the $k$-th diagonal entry. Due to the trace condition, they satisfy $\sum_{k=1}^{n+1} \lambda_k = 0$.
	
	An irreducible representation of $\SU(n+1)$ is uniquely determined by its highest weight. Any dominant integral weight can be written in the form $\lambda = \sum_{k=1}^n z_k \lambda_k$ for a unique set of integers $z_k$ satisfying the dominance condition $z_1 \ge z_2 \ge \dots \ge z_n \ge 0$. We will denote such a representation by $\varrho(z_1, \dots, z_n) \in \widehat{\SU(n+1)}$.
	
    According to Table \ref{table:3sasaki_groups}, the branching described in Theorem \ref{thm:operational_branching}, requires the branching of representations of $\SU(n+1)$ with respect to the subgroup $K \cdot H \cong S(\U(n-1) \times \U(2))$. A direct analysis of this restriction is complicated by the special unitary condition, which couples the determinants of the two block factors. We employ a lifting strategy: we extend the representations from $\SU(n+1)$ to $\U(n+1)$, where the subgroup acts as a direct product $\U(n-1) \times \U(2)$. This allows us to apply the reciprocity lemma introduced in Section \ref{sec:the_tools}, effectively transforming the branching problem into a tensor product decomposition. The conditions for spherical representations are then derived using Proposition \ref{prop: tensor product decomposition}.
      
\begin{prop}\label{prop: Al spherical reps}
The irreducible representation $\varrho(z_{1},\dots,z_{n})\otimes \varrho(z_{n+1})\in \widehat{\SU(n+1)\otimes \SU(2)}$ is $S(U(n-1)\times U(1)\id_2)\times \Delta \SU(2)$-spherical 
if and only if 

\begin{enumerate}
\item Case $n=2$:
\[
z_1 + z_2 \equiv 0 \pmod{3},
\]
and $z_{3}$ must be even with 
\[
|z_1-2z_2| \le \frac{3z_{3}}{2} \le \min(2z_1-z_2,z_1+z_2). 
\]

\item Case $n=3$: 
\[
            z_1 + z_2 + z_3 \equiv 0 \pmod{4},\qquad 
            z_3 \le \frac{z_1 + z_2 + z_3}{4} \le z_2,
        \]
        and $z_{4}$ must be even satisfying
        \[
            |z_1-z_2-z_3| \le z_{4} \le z_1-z_2+z_3. 
        \]
\item Case $n\geq 4$: 
\[
z_1 + z_2 + z_n = 4z_i,\text{ for } i=3,\dots,n-1,
\]
and $z_{n+1}$ must be even with 
\[
|z_1-z_2-z_n| \le z_{n+1} \le z_1-z_2+z_n. 
\]
\end{enumerate}
In all cases the multiplicity of the invariant subspace is one. 
Note that, in all cases, $z_{n+1}$ is even (which must be the case since the $3$-Sasaki manifold has fiber $\SO(3))$.

\end{prop}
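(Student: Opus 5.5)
The plan is to apply Theorem \ref{thm:operational_branching} with $G=\SU(n+1)$, $L=S(\U(n-1)\times\U(2))$, $K=S(\U(n-1)\times\U(1)\id_2)$ and $H=\SU(2)$. Since every representation of $\SU(2)$ is self-dual, the task is to compute
\[
\mult\big(1_K\otimes\varrho(z_{n+1}),\ \varrho(z_1,\dots,z_n)\myrestriction_L\big).
\]

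\textbf{Step 1: reduce to $\U(n+1)$.} Lift $\varrho(z_1,\dots,z_n)$ to $\U(n+1)$ with highest weight $\lambda_m=(z_1+m,\dots,z_n+m,m)$, where $m\in\Z$ is a determinant twist still to be fixed. Restrict to $\U(n-1)\times\U(2)$ and take an irreducible summand $\varrho_{\U(n-1)}(\mu)\otimes\varrho_{\U(2)}(a,b)$.
\begin{itemize}
\item Since $K\supset\SU(n-1)$, the summand can be $K$-trivial only if $\mu=(c^{n-1})$ is a power of the determinant.
\item Evaluate on $(A,e^{i\theta}\id_2)$ with $\det A=e^{-2i\theta}$. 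Triviality on all of $K$ then forces $a+b=2c$.
\item The $\SU(2)$-factor of $\varrho_{\U(2)}(a,b)$ has highest weight $a-b$, so we need $a-b=z_{n+1}$.
\end{itemize}
Together these give $a=c+z_{n+1}/2$ and $b=c-z_{n+1}/2$. In particular $z_{n+1}$ is even, which is the parity claim.

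The twist $m$ is not free. Counting boxes gives $|\lambda_m|=\sum_{i\le n} z_i+(n+1)m=(n+1)c$, so $m$ determines $c$. Moreover $\lambda_m$ must be a partition, and the lemma needs $\mu,\nu$ to be partitions, so $c\ge b\ge 0$. Shifting $m$ and $c$ simultaneously by the same amount changes nothing, so we may normalise $m$ to be the last entry of $\lambda$, as it is in $\lambda_m$. Lemma \ref{lem:reciprocity} then identifies the multiplicity with
\[
\mult\big(\lambda_m,\ \varrho_{\GL(n+1)}(a,b)\otimes\varrho_{\GL(n+1)}(c^{n-1})\big).
\]

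\textbf{Step 2: case $n\ge4$.} Invoke Proposition \ref{prop: tensor product decomposition} with $k=c$. It shows the multiplicity is at most one. Combined with the fact that $(m,c)$ is uniquely determined up to the joint shift, this gives the multiplicity-one claim.

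Then match $\lambda_m=(k+i_1,k+i_2,k^{n-3},i_3,i_4)$ entrywise:
\[
i_4=m,\qquad z_j+m=k\ (3\le j\le n-1),\qquad z_1+m=k+i_1,\qquad z_2+m=k+i_2,\qquad z_n+m=i_3 .
\]
Substituting into $i_1+i_2+i_3+i_4=2k$ gives $z_1+z_2+z_n=4(k-m)=4z_j$, the first condition. Next rewrite the four inequalities of the proposition in terms of $z$ and $z_{n+1}$, using $a=k+z_{n+1}/2$, and eliminate the free parameters $k$ and $m$. What should remain is exactly
\[
|z_1-z_2-z_n|\le z_{n+1}\le z_1-z_2+z_n .
\]

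\textbf{Step 3: cases $n=3$ and $n=2$.} For $n=3$ the block $k^{n-3}$ is empty and $\nu=(k,k)$. The proof of Proposition \ref{prop: tensor product decomposition} goes through verbatim, with row $n$ now equal to row $3$. The constraint $z_1+z_2+z_3=4(k-m)$ becomes a congruence modulo $4$ together with the range condition $z_3\le (z_1+z_2+z_3)/4\le z_2$, which encodes $0\le i_3-i_4\le\cdots$.

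For $n=2$ we have $\nu=(k)$, a single row, so I would redo the Littlewood--Richardson count for $(a,b)\otimes(k)$ directly with Pieri's rule in $\GL(3)$. Eliminating again should give the mod-$3$ condition and the inequality for $\tfrac{3z_3}{2}$.

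\textbf{Main obstacle.} I expect the elimination of the auxiliary parameters $k,m,i_1,\dots,i_4$ to be the hardest part: showing that the feasibility region projects exactly onto the stated inequalities, and checking that the special small-$n$ cases come out as stated. I would first handle $n\ge4$ carefully and then adapt the bookkeeping to $n=3$ and $n=2$.
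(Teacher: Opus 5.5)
Your proposal follows the paper's proof step by step. You lift to $\U(n+1)$ and force the $\U(n-1)$-part to be $\det^k$, so the $\U(2)$-part must be $\varrho_{\U(2)}(k+z_{n+1}/2,\,k-z_{n+1}/2)$. You pass to $\varrho_{\GL(n+1)}(k^{n-1})\otimes\varrho_{\GL(n+1)}(a,b)$ via Lemma~\ref{lem:reciprocity}, read off the answer from Proposition~\ref{prop: tensor product decomposition} for $n\ge 3$, and use the $\GL(3)\downarrow\GL(2)\times\GL(1)$ interlacing for $n=2$, which is equivalent to your Pieri count.

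The only difference is cosmetic. The paper fixes the twist $m=0$, so $i_4=0$; your general $m\ge 0$ drops out of the elimination. The remaining bookkeeping gives six linear inequalities: one is redundant and, for $n\ge 4$, two are implied by dominance, which leaves exactly the stated conditions.
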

\begin{proof}
According to Theorem \ref{thm:operational_branching}, the representation $\varrho_{\SU(n+1)}(z_{1},\dots,z_{n}) \otimes \varrho_{\SU(2)}(z_{n+1})$ contributes to the spectrum if and only if its restriction to the base isotropy group $L = S(\U(n-1) \times \U(2))$ contains the component $1_K \otimes \varrho_{\SU(2)}(z_{n+1})^*$. Since irreducible representations of $\SU(2)$ are self-dual, the required fiber component is simply $\varrho_{\SU(2)}(z_{n+1})$.

To analyze the branching behavior, we lift the representation \linebreak$\varrho_{\SU(n+1)}(z_{1},\dots,z_{n})$ to a representation $\varrho_{\U(n+1)}(z_{1},\dots,z_{n},0)$ of the unitary group. Its restriction to the subgroup $\U(n-1) \times \U(2)$ yields a decomposition into irreducible modules of the form:
\begin{align*}
    \varrho_{\U(n+1)}(z_{1},...,z_{n},0)\myrestriction_{\U(n-1)\times \U(2)}
    =\bigoplus_{\textbf{x},\textbf{y}} m(\textbf{x},\textbf{y})\, \varrho_{\U(n-1)}(\textbf{x})\otimes \varrho_{\U(2)}(\textbf{y}),
\end{align*}
where $\textbf{x},\textbf{y}$ are suitable  integer tuples and $m(\textbf{x},\textbf{y})$ multiplicity coefficients.
We must determine which of these components descend to the required $1_K \otimes \varrho_{\SU(2)}(z_{n+1})$ structure under the group $L$. Elements of $L = S(\U(n-1) \times \U(2))$ can be parameterized by pairs $(A, B)$ with $A \in \U(n-1)$ and $B \in \SU(2)$, embedded into $\SU(n+1)$ via:
\[
    (A, B) \longmapsto \begin{pmatrix} A & 0 \\ 0 & \det(A)^{-1/2}B \end{pmatrix}.
\]
Such an element acts on a tensor product component $V_{\varrho_{\U(n-1)}(\mathbf{x})} \otimes V_{\varrho_{\U(2)}(\mathbf{y})}$ (from the $\U(n+1)$ restriction) as:
\begin{align*}
    \varrho_{\U(n-1)}(\mathbf{x})(A) \otimes \varrho_{\U(2)}(\mathbf{y})\left(\det(A)^{-1/2}B\right).
\end{align*}
Using the property that $\varrho_{\U(2)}(\mathbf{y})$ with $\textbf{y}=(y_1,y_2)$  acts on central scalars $c$ as multiplication by $c^{y_1+y_2}$, and that $\varrho_{\SU(2)}(y_1-y_2)$ is the standard restriction of the $\U(2)$-representation to $\SU(2)$ (see Proposition \ref{prop: branching GoWa}), we can factor out the determinant part:
\begin{align*}
    \left( \varrho_{\U(n-1)}(\mathbf{x})(A) \cdot \det(A)^{-(y_1+y_2)/2} \right) \otimes \varrho_{\SU(2)}(y_1-y_2)(B).
\end{align*}
 For this to match our target component $1_K \otimes \varrho_{\SU(2)}(z_{n+1})$, two independent constraints on the weights $\mathbf{x}$ and $\mathbf{y}$ must be satisfied:

First, the $K$-part (the action of $A)$ must be trivial. This forces $\varrho_{\U(n-1)}(\mathbf{x})$ to be a one-dimensional representation of the form $\det(A)^k$. Consequently, the weights must be constant, $x_1 = \dots = x_{n-1} = k$. To cancel the additional factor $\det(A)^{-(y_1+y_2)/2}$, the integer $k$ must satisfy the relation $k = (y_1+y_2)/2$.

Second, the $\SU(2)$-part of the action must match the fiber representation. This occurs if and only if their highest weights coincide: $y_1 - y_2 = z_{n+1}$.

Combining these conditions, the required component appears if and only if the specific module 
\[
\varrho_{\U(n-1)}(k^{n-1}) \otimes \varrho_{\U(2)}(y_1, y_2)
\]
occurs with non-zero multiplicity in the restriction of $\varrho_{\U(n+1)}(z_1, \dots, z_n, 0)$, where $y_1, y_2$ are determined by the relations 
\[
\frac{y_1+y_2}{2}=k\quad\text{and}\quad y_1-y_2= z_{n+1}.
\] In particular, this implies $z_{n+1}$ must be even.

Using the reciprocity lemma (Lemma \ref{lem:reciprocity}) and Proposition \ref{prop: tensor product decomposition}, this condition is equivalent to requiring that the tensor product multiplicity satisfies:
\[
\mathrm{mult}
\left(
\varrho_{\GL(n+1)}\big(z_{1},z_{2},(k^{n-3}),z_{n},0\big), 
\varrho_{\GL(n+1)}(k^{n-1})\otimes \varrho_{\GL(n+1)}(y_1,y_2)
\right)
\ge 1,
\]
for some $k\ge0$ and $y_1\ge y_2$ satisfying the above identities. For $n\geq 3$, it follows from Proposition \ref{prop: tensor product decomposition} that this occurs if and only if
\[
z_1=k+i_1,\qquad 
z_2=k+i_2,\qquad
z_n=i_3,\qquad 
0=i_4,
\]
with $i_1+i_2+i_3=2k$. Substituting the values for $i_1, i_2, i_3$ into the inequalities of the theorem, yields:
\begin{align*}
y_1-k & \le i_1 \le y_1, \\
y_1-i_1 & \le i_3 \le k,\\
0 & \le i_2 \le y_1-i_3.
\end{align*}
Since $y_1=k+\frac{z_{n+1}}{2}$, these become $z_1+z_2+z_n=4k$ and
\begin{align*}
\frac{z_{n+1}}{2} & \le z_1-k \le k+\frac{z_{n+1}}{2}, \\
2k+\frac{z_{n+1}}{2}-z_1 & \le z_n \le k,\\
0 & \le z_2-k \le k+\frac{z_{n+1}}{2}-z_n.
\end{align*}
These are the same as
\begin{align*}
z_{n+1}+2k & \le 2z_1 \le 4k+z_{n+1}, \\
4k+z_{n+1}-2z_1 & \le 2z_n \le 2k,\\
2k & \le 2z_2 \le 4k+z_{n+1}-2z_n.
\end{align*}
We replace $4k=z_1+z_2+z_n$ and we have
\begin{align*}
3z_1-z_2-z_n-2z_{n+1}&\ge 0, \\
-z_1+z_2+z_n+z_{n+1}&\ge 0, \\
z_1- z_2 + z_n - z_{n+1}& \ge 0, \\
z_1+z_2-3z_n &\ge 0, \\
-z_1+3z_2-z_n &\ge 0, \\
z_1-z_2-z_n+z_{n+1}& \ge 0.
\end{align*} 
Note that the first inequality follows from adding twice the third inequality to the fourth, and is therefore redundant (this corresponds to the fact that $y_1-k\le i_1$ follows from 
$y_1-i_1\le i_3\le k)$. 
Using the ordering relations $z_1\geq z_2\geq ...\geq z_n$, it is straightforward to see that these conditions are equivalent to those in the statement of the theorem. \\
For the case $n=2$, the reciprocity condition simplifies to the branching rule for $\GL(3) \downarrow \GL(2) \times \GL(1)$. According to \cite[Thm. 8.1.2]{Goodman Wallach}, the restriction of an irreducible representation $\varrho_{\GL(3)}(z_1, z_2, 0)$ decomposes into a sum of representations $\varrho_{\GL(2)}(y_1, y_2) \otimes \det^x$ where the weights $(y_1, y_2)$ interlace with $(z_1, z_2, 0)$, i.e.,
\[
z_1 \ge y_1 \ge z_2 \ge y_2 \ge 0,
\]
and the $\GL(1)$ weight is given by $x =  (z_1+z_2) - (y_1+y_2)$.
We identify $x$ with the parameter $k$ of the one-dimensional representation $\varrho_{\GL(1)}(k)$. Imposing the spherical conditions $y_1+y_2=2k$ and $y_1-y_2=z_3$ implies $z_1+z_2 = k + 2k = 3k$, and substituting $y_{1,2} = k \pm z_3/2$ into the interlacing inequalities yields the result of Case (1).
\end{proof}

\begin{rem}
    We remark that the classification of spherical representations in Proposition \ref{prop: Al spherical reps} can alternatively be derived by applying the explicit branching rules for unitary groups described in \cite{BenHalima07}. This approach involves lifting the $\SU(n+1)$-representations to $\U(n+1)$ and evaluating the interlacing conditions for the restriction to $\U(n-1) \times \U(2)$. While this method yields the same set of conditions, the evaluation of the resulting combinatorial constraints is quite involved and less transparent from a representation-theoretic perspective.
\end{rem}    
We can now explicitly state the spectrum.
     \begin{rem}
    While evaluating Casimir eigenvalues for classical Lie groups via Freudenthal's formula is standard (see e.g. \cite{Yamaguchi79}), we carry out these computations explicitly to carefully track the metric normalizations, scaling parameters, and weight conventions of our setup.
\end{rem}
\begin{thm}\label{thm: spec Al 3 alpha delta sasaki}
The spectrum of the Laplace-Beltrami operator on $(\SU(n+1)/S(\U(n-1)\times \U(1)\id_2),g_{t_{0},t_{1}})$ consists of the eigenvalues $\eta$ with multiplicities $\mult(\eta)$ given as follows:

\begin{enumerate}
    \item \emph{Case $n=2$:} 
    The eigenvalues depend on the parameters $z_1, z_2, z_3$ and are given by
    \[
    \eta(z_1,z_2,z_3)=4\alpha\delta\big( (z_1 + 1)^2  + z_2^2   - \tfrac{1}{3}(z_1 + z_2)^2 - 1 \big) + \delta(\delta - 2\alpha) z_{3} (z_{3}+2).
    \]
    The parameters $z_1, z_2 \ge 0$ are integers, and $z_3 \ge 0$ is even with
    \[
    z_1 + z_2 \equiv 0 \pmod{3}, \qquad |z_1-2z_2| \le \frac{3z_{3}}{2} \le \min(2z_1-z_2,z_1+z_2). 
    \]
    The multiplicity is given by
    \[
    \mult(\eta(z_1,z_2,z_3)) = \frac{1}{2}(z_{3}+1)(z_1 - z_2 + 1)(z_1 + 2)(z_2 + 1).
    \]

    \item \emph{Case $n \ge 3$:} 
    The eigenvalues depend only on the boundary variables $z_1, z_2, z_n$ and the fiber weight $z_{n+1}$:
       \begin{align*}
    &\eta(z_1,z_2,z_n,z_{n+1}) \\
    &= 4 \alpha \delta \Big( (z_1 + n-1)^2 + (z_2 + n-2)^2 + z_n^2   - \big(\tfrac{z_1 + z_2 + z_n}{2} + n-2\big)^2 - (n-1)^2 \Big) \\
    &\quad +\delta(\delta - 2\alpha) z_{n+1} (z_{n+1}+2).
    \end{align*}
    The parameters $z_1, z_2, z_n \ge 0$ are integers and $z_{n+1} \ge 0$ is even. They satisfy
     \[
      z_1\ge z_2,\qquad  z_1 + z_2 + z_n \equiv 0 \pmod{4}, \qquad z_n \le \frac{z_1 + z_2 + z_n}{4} \le z_2,
        \]
        and
        \[
        |z_1-z_2-z_n| \le z_{n+1} \le z_1-z_2+z_n. 
        \]
    The multiplicity is given by
\begin{align*}
    &\mult (\eta(z_{1}, z_2, z_n, z_{n+1})) \\[2mm]
    &= \frac{1}{n(n-1)^2(n-2)^3} (z_{n+1}+1)(z_1+n)(z_2+n-1)(z_n+1)\\[1mm]
    &\cdot (z_1-z_2+1)(z_1-z_n+n-1)(z_2-z_n+n-2) \\[1mm]
    &\cdot 
    \binom{\tfrac{z_1+z_2+z_n}{4}+n-2}{n-3} 
    \binom{\tfrac{3z_1-z_2-z_n}{4}+n-2}{n-3}
    \binom{\tfrac{-z_1+3z_2-z_n}{4}+n-3}{n-3}
    \binom{\tfrac{z_1+z_2-3z_n}{4}+n-3}{n-3}.
\end{align*}
\end{enumerate}
\end{thm}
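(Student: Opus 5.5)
The plan is to combine Theorem \ref{thm:operational_branching} with Proposition \ref{prop: Al spherical reps}. The latter already lists all admissible pairs $(\varrho_G,\varrho_H)=(\varrho(z_1,\dots,z_n),\varrho(z_{n+1}))$ and shows that the branching multiplicity in \eqref{eq:branching_condition} is always one. What remains is to evaluate the two Casimir eigenvalues in \eqref{eq:sasaki_spectrum_formula} and the dimension product $\dim\varrho_G\cdot\dim\varrho_H$ on this parameter set. For $n\ge 3$ we first reduce the variables. The conditions $z_1+z_2+z_n=4z_i$ for $3\le i\le n-1$ fix all middle weights to $k=(z_1+z_2+z_n)/4$. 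Hence $\lambda=(z_1,z_2,k^{n-3},z_n,0)$ depends only on $z_1,z_2,z_n$. The constraints $z_n\le k\le z_2$ then encode dominance, and divisibility by $4$ encodes integrality.

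For the Casimir values, fix the normalization $Q(X,Y)=-\tfrac12\tr(XY)$ (or whichever normalization matches $t_0=(2\alpha\delta)^{-1}$ in \cite{AgricolaDileoStecker21}). We use $c_Q(\lambda)=\langle\lambda,\lambda+2\rho\rangle=\|\lambda+\rho\|^2-\|\rho\|^2$, with the induced norm on $\t^*$ realized in $\R^{n+1}$ modulo the trace. For $\SU(n+1)$ we have $\rho=(n,n-1,\dots,0)$ up to a shift, and $\|v\|^2=\sum v_i^2-\frac1{n+1}(\sum v_i)^2$. Inserting $\lambda+\rho$ with constant middle block $k$, the middle coordinates $k+n-i$ combine with $\rho$ and cancel against $\|\rho\|^2$. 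This should produce the stated expression $(z_1+n-1)^2+(z_2+n-2)^2+z_n^2-(2k+n-2)^2-(n-1)^2$, after a suitable shift of $\rho$ and the overall factor $2$ that gives $4\alpha\delta$. I would verify this by a direct expansion and sanity-check it on $\lambda=0$. For $\SU(2)$, with $Q$ restricted to $\su(2)$ embedded in the lower block, one gets $c(z_{n+1})=\tfrac12 z_{n+1}(z_{n+1}+2)$, and the factor $2$ is absorbed in the same way. The case $n=2$ is the same computation with $\rho=(1,0,-1)$ and $z_1+z_2=3k$.

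For the multiplicities, apply the Weyl dimension formula $\prod_{i<j}(\lambda_i-\lambda_j+j-i)/(j-i)$ to $\lambda=(z_1,z_2,k^{n-3},z_n,0)$. Multiply the result by $\dim\varrho(z_{n+1})=z_{n+1}+1$. Group the factors as follows. The pairs among $\{1,2,n,n+1\}$ give the linear factors $(z_1-z_2+1)(z_1-z_n+n-1)(z_1+n)(z_2-z_n+n-2)(z_2+n-1)(z_n+1)$. The pairs inside the middle block give $1$. The pairs between an outer index and the middle block give products of consecutive integers, which assemble into the four binomial coefficients in $z_1-k$, $z_2-k$, $k-z_n$ and $k$. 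Examples are $\prod_{j=3}^{n-1}(z_1-k+j-1)/(j-1)$ and $\prod_{i=3}^{n-1}(k-z_n+n-i)/(n-i)$.

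The main obstacle is the bookkeeping in this last step. After the binomials are extracted, the leftover denominators from $\prod_{i<j}(j-i)$ must collapse exactly to $n(n-1)^2(n-2)^3$, and each binomial argument must be checked. For instance, $z_1-k=(3z_1-z_2-z_n)/4$ should give $\binom{\cdot+n-2}{n-3}$, while $z_2-k$ gives $\binom{\cdot+n-3}{n-3}$. I would verify this with the small cases $n=3,4$ and the trivial representation, where the multiplicity must equal $1$.
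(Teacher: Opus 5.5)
Your proposal is correct and follows the paper's proof essentially step by step. Like the paper, you combine Theorem \ref{thm:operational_branching} with Proposition \ref{prop: Al spherical reps}. You evaluate the Casimir with the constant middle block $k$, whose linear part vanishes and whose quadratic part collapses to $-4k^2$. You then split the Weyl dimension formula into outer-index factors and four products of consecutive integers, which give exactly the four binomials and the constant $1/(n(n-1)^2(n-2)^3)$.

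The normalization you guessed, $Q=-\tfrac12\tr$, is exactly what the paper derives from the 3-Sasaki metric $g=-4(d+2)B$ on the dual with $d=n-1$, so you can drop the hedge.
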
 
	\begin{proof}
		The highest weight is given by $\lambda=\sum_{i=1}^{n}z_{i}\lambda_{i}$, where $\lambda_{i}\in \t^{*}$ denotes the projection to the $i$-th diagonal entry. 
        Note that $\lambda_{i}$ is the dual element of $X_{i}-\frac{1}{n+1}\sum_{j=1}^{n+1}X_{j}\in \su(n+1)$, where $X_{j}\in \u(n+1)$ has only the value $i=\sqrt{-1}$ on the $j$-th diagonal entry. With respect to the standard Euclidean product given by $\langle A,B\rangle=-\tr(A\cdot B)$, the vectors $X_{i}$ yield an ONB and we see that
		\begin{align*}
			&\langle \lambda_{i},\lambda_{j}\rangle=\left\langle X_{i}-\frac{1}{n+1}\sum_{k=1}^{n+1}X_{k},X_{j}-\frac{1}{n+1}\sum_{k=1}^{n+1}X_{k}\right\rangle=\delta_{i,j}-\frac{2}{n+1}+\frac{n+1}{(n+1)^{2}}\\
			&=\delta_{i,j}-\frac{1}{n+1}.
		\end{align*}
		The half of the sum of positive roots is given by
		\begin{align*}
			\delta=\frac{1}{2}\sum_{1\leq i\leq j\leq n+1}\lambda_{i}-\lambda_{j}.
		\end{align*}
		Any fixed $\lambda_{k}$ occurs with a positive sign if $i=k$ and $k<j$, e.g. $n+1-k$-times and with a negative sign if $j=k$ and $i<k$, e.g. in total we have
		\begin{align*}
			\delta=\sum_{k=1}^{n+1}\frac{n+2-2k}{2}\lambda_{k}.
		\end{align*} 
		Using that $\lambda_{n+1}=-\sum_{k=1}^{n}\lambda_{k}$, we obtain
		\begin{align*}
			\delta=\sum_{k=1}^{n}(n+1-k)\lambda_{k}.
		\end{align*}
		The Freudenthal-formula evaluates to
		\begin{align*}
			& \left\langle \sum_{k=1}^{n} (2(n+1)+z_k-2k)\lambda_k, \sum_{j=1}^{n} z_j \lambda_j \right\rangle \\
			&= \sum_{i,j=1}^{n} z_j (2(n+1)+z_i-2i) \left(\delta_{ij}-\frac{1}{n+1}\right) 
		\end{align*}
		which further simplifies to
		\begin{align*}
			&= \sum_{k=1}^{n} z_k (2(n+1)+z_k-2k) - \frac{1}{n+1} \left( \sum_{j=1}^{n} z_j \right) \left( \sum_{k=1}^{n} (2(n+1)+z_k-2k) \right) \\
			&= \sum_{k=1}^{n} (2(n+1)z_k + z_k^2 - 2kz_k) - \frac{1}{n+1} \left( \sum_{j=1}^{n} z_j \right) \left( n(n+1) + \sum_{k=1}^{n} z_k \right) \\
			&= \sum_{k=1}^{n} z_k^2 + \sum_{k=1}^{n}(n+2-2k) z_k - \frac{1}{n+1}\left(\sum_{k=1}^{n}z_k\right)^2.	
		\end{align*}
		The unique positive root which is at the same time a dominant integral element is $\lambda_{1}-\lambda_{n+1}$ which is the root of $\su(2)\subset \su(n+1)$. We compute the Freudenthal formula with respect to the highest weight $(\lambda_{1}-\lambda_{n+1})\frac{z_{n+1}}{2}$ which belongs to an irreducible $\su(2)$-representation:
		\begin{align*}
			&\left\langle (\lambda_{1}-\lambda_{n+1})(1+\frac{z_{n+1}}{2}),(\lambda_{1}-\lambda_{n+1})\frac{z_{n+1}}{2}\right\rangle=\frac{z_{n+1}}{2}\cdot(1+\frac{z_{n+1}}{2})\left\langle \lambda_{1}-\lambda_{n+1},\lambda_{1}-\lambda_{n+1}\right\rangle\\
			&=\frac{z_{n+1}}{2}\cdot(1+\frac{z_{n+1}}{2})\left(1-\frac{1}{n+1}+\frac{2}{n+1}+1-\frac{1}{n+1}\right)=\frac{z_{n+1}\cdot(2+z_{n+1})}{2}.
		\end{align*}
		Recall from \cite{GoertschesRoschigStecker23} that the $3$-Sasaki metric on the orthogonal complement of $\su(2)^{*}$ in $\su(n+1)^{*}$ is given by $g=-4(d+2)B$, where $d=(\dim(G/K)-3)/4$ and $B=-\frac{1}{2(n+1)}\langle \cdot,\cdot\rangle$ is the form on the dual space induced by the Killing form of $\su(n+1)$.
		Let us translate $d$ to $n$:
        \begin{align*}
        \dim (G)=\dim (\SU(n+1))=n^2+2n,\quad
\dim (K)=\dim (S(\U(n-1)\times \U(1)\id_2))=(n-1)^2
        \end{align*}
        and hence
        \begin{align*}
			d&=\frac{\dim(G/K)-3}{4}=\frac{n^{2}+2n-n^{2}+2n-1-3}{4}=n-1,
		\end{align*}
		so the metric on the dual space is given in the parallel case $1=\delta^{2}=2\alpha\delta$ by
		\begin{align*}
			\frac{4(n+1)}{2(n+1)}\langle\cdot,\cdot\rangle=2\langle\cdot,\cdot\rangle.
		\end{align*}
		In order to obtain the spectrum, we only need to rescale the respective Freudenthal formulas
		\begin{align*}
			&2\alpha\delta\cdot\left(2\left(\sum_{k=1}^{n} z_k^2 + \sum_{k=1}^{n}(n+2-2k) z_k - \frac{1}{n+1}\left(\sum_{k=1}^{n}z_k\right)^2\right)-z_{n+1}\cdot(2+z_{n+1})\right)\\
			&+\delta^{2}\left(z_{n+1}\cdot(2+z_{n+1})\right)
		\end{align*}
        By rearranging the terms, we can easily separate the variables. The terms depending on $z_{n+1}$ factorize directly to $\delta(\delta-2\alpha)z_{n+1}(z_{n+1}+2)$. For the remaining terms, we rely on the constraints established in Proposition \ref{prop: Al spherical reps}.

For the case $n=2$, the remaining part of the eigenvalue simplifies to $4\alpha\delta(z_1^2 + z_2^2 + 2z_1 - \frac{1}{3}(z_1+z_2)^2)$. By completing the square for $z_1$, this can be compactly written as $4\alpha\delta((z_1+1)^2 + z_2^2 - \frac{1}{3}(z_1+z_2)^2 - 1)$. 

For $n \ge 3$, Proposition \ref{prop: Al spherical reps} states that $z_k = \frac{z_1+z_2+z_n}{4}$ for all $3 \le k \le n-1$ (note that for $n=3$ this condition is empty, but the formula below still applies without modification). Let $S = z_1+z_2+z_n$. Summing over all $n$ components yields $\sum_{k=1}^n z_k = S + (n-3)\frac{S}{4} = \frac{n+1}{4}S$. Consequently, the subtracted squared sum becomes $\frac{1}{n+1}(\sum_{k=1}^n z_k)^2 = \frac{n+1}{16}S^2$. 
Evaluating the linear and quadratic terms for the middle block $3 \le k \le n-1$ yields:
\[
    \sum_{k=3}^{n-1} z_k^2 = (n-3)\frac{S^2}{16} \quad \text{and} \quad \sum_{k=3}^{n-1} (n+2-2k) z_k = \frac{S}{4} \sum_{k=3}^{n-1} (n+2-2k) = 0,
\]
because the coefficients \(n+2-2k\) sum to zero over this range. Grouping the remaining $S^2$ terms gives $(n-3)\frac{S^2}{16} - \frac{n+1}{16}S^2 = -\frac{1}{4}S^2$. Finally, evaluating the remaining $k \in \{1,2,n\}$ terms and completing the squares yields the simplified expression. Thus, the spectrum is given by:
\begin{itemize}
    \item For $n=2$: 
    \[
    \eta(z_1,z_2,z_3)=4\alpha\delta\big( (z_1 + 1)^2  + z_2^2   - \tfrac{1}{3}(z_1 + z_2)^2 - 1 \big) + \delta(\delta - 2\alpha) z_{3} (z_{3}+2).
    \]
    \item For $n\geq 3$:
    \begin{align*}
    &\eta(z_1,z_2,z_n,z_{n+1}) \\
    &= 4 \alpha \delta \Big( (z_1 + n-1)^2 + (z_2 + n-2)^2 + z_n^2   - \big(\tfrac{z_1 + z_2 + z_n}{2} + n-2\big)^2 - (n-1)^2 \Big) \\
    &\quad +\delta(\delta - 2\alpha) z_{n+1} (z_{n+1}+2).
    \end{align*}
\end{itemize}

		Finally, we compute the multiplicity of each eigenvalue. As shown in the proof of Proposition \ref{prop: Al spherical reps}, the multiplicity of the trivial representation in the branching is one. Therefore, the total multiplicity of the eigenvalue $\eta(z_{1},\dots,z_{n+1})$ is the product of the dimensions of the corresponding irreducible representations of $\SU(n+1)$ and $\SU(2)$. The dimension formula can be found in \cite[Thm. 6.3]{Fulton_Harris} and can be written as 
        \[
\frac{1}{n!}\, (z_{n+1} + 1) (z_n + 1) (z_{n-1} + 2) \dots (z_1 + n) \cdot \prod_{1\le i < j \le n}  \frac{z_i - z_j + j - i}{j - i}.
\]
Using the constraints from Proposition \ref{prop: Al spherical reps}, we can factorize this expression into a much more explicit form. 
For $n=2$, directly evaluating the product over $1 \le i < j \le 2$ immediately yields the dimension:
\[
    \mult (\eta(z_{1},z_{2},z_{3})) = \frac{1}{2} (z_{3}+1)(z_1 - z_2 + 1)(z_1 + 2)(z_2 + 1).
\]

For $n \ge 3$, Proposition \ref{prop: Al spherical reps} forces the intermediate variables in the block $K = \{3, \dots, n-1\}$ to be strictly equal to $\frac{S}{4} = \frac{z_1+z_2+z_n}{4}$. Consequently, any term in the product $\prod_{i<j} \frac{z_i-z_j+j-i}{j-i}$ with both indices in $K$ reduces to $1$. 

The remaining contributions involving $K$ arise from four distinct sources, each forming a product of $(n-3)$ factors:
\begin{enumerate}
    \item The cross-terms where $i=1$ and $j \in K$.
    \item The cross-terms where $i=2$ and $j \in K$.
    \item The cross-terms where $i \in K$ and $j=n$.
    \item The corresponding terms $(z_k + n + 1 - k)$ for $k \in K$ originating from the prefactor of the dimension formula.
\end{enumerate}
Since the product of any $m$ consecutive integers can be written as $m! \binom{X}{m}$, we can express these four products as four binomial coefficients of the form $\binom{\dots}{n-3}$, extracting a factor of $((n-3)!)^4$ in the numerator.
We can then collect all corresponding denominator terms $(j-i)$ for these cross-terms, which evaluate to $(n-2)!$, $(n-3)!$, and $(n-3)!$, respectively. Dividing our extracted numerator by these factorials, the isolated terms for $i,j \in \{1, 2, n\}$, and the global $\frac{1}{n!}$ yields the rational constant:
\[
    \frac{((n-3)!)^4}{n! \cdot (n-2)! \cdot ((n-3)!)^2 \cdot (n-1)(n-2)} = \frac{1}{n(n-1)^2(n-2)^3}.
\]
The total multiplicity for $n \ge 3$ simplifies to:
\begin{align*}
    &\mult (\eta(z_{1}, z_2, z_n, z_{n+1})) \\[2mm]
    &= \frac{1}{n(n-1)^2(n-2)^3} (z_{n+1}+1)(z_1+n)(z_2+n-1)(z_n+1)\\[1mm]
    &\cdot (z_1-z_2+1)(z_1-z_n+n-1)(z_2-z_n+n-2) \\[1mm]
    &\cdot 
    \binom{\tfrac{z_1+z_2+z_n}{4}+n-2}{n-3} 
    \binom{\tfrac{3z_1-z_2-z_n}{4}+n-2}{n-3}
    \binom{\tfrac{-z_1+3z_2-z_n}{4}+n-3}{n-3}
    \binom{\tfrac{z_1+z_2-3z_n}{4}+n-3}{n-3}.
\end{align*}
 \end{proof}
\begin{rem}
	The case $n=2$ of our general result for the type $A_l$ family corresponds to the 3-$(\alpha$,$\delta)$-Sasaki structures on the Aloff-Wallach manifold $W^{1,1} = \SU(3)/S^1$. In this specialization, both the conditions on the highest weight parameters $(z_1, z_2, z_3)$ given in Proposition \ref{prop: Al spherical reps} and the resulting eigenvalue formula exactly reproduce the results of \cite[Thm. 3.7]{AgricolaHenkel25}, which were derived specifically for this manifold. This serves as a crucial consistency check for our generalized formula.
\end{rem}

\subsection{$3$-$(\alpha,\delta)$-Sasaki manifolds of type $B_{n}$ and $D_{n}$}
We now consider the families associated with the special orthogonal groups $G = \SO(m)$ for $m \ge 5$. Let $n = \lfloor m/2 \rfloor$ denote the rank of the group. The case $m = 2n+1$ corresponds to the Type $B_n$ series, while $m = 2n$ corresponds to the Type $D_n$ series. 
 
 In both cases, the maximal torus has a Lie algebra $\mathfrak{t} \cong \R^n$, which is generated by $n$ block-diagonal matrices of the form $\begin{pmatrix} 0 & -x_j \\ x_j & 0 \end{pmatrix}$. The dual basis for $\mathfrak{t}^*$ is $\{\lambda_1, \dots, \lambda_n\}$, where $\lambda_{j}$ denotes the projection onto the coefficient $x_{j}$ of the $j$-th block.
 
 An irreducible representation is uniquely determined by its highest weight $\Lambda = \sum_{i=1}^{n} z_i \lambda_i$, where the coefficients $z_i$ are integers satisfying dominance conditions that depend on the type of the group:
 \begin{itemize}
 	\item For type $B_{n}$ $(m = 2n+1)$, the coefficients satisfy $z_1 \ge z_2 \ge \dots \ge z_n \ge 0$.
 	\item For type $D_{n}$ $(m = 2n)$, the coefficients satisfy $z_1 \ge z_2 \ge \dots \ge z_{n-1} \ge |z_n|$.
 \end{itemize}
 In particular, for the type $D_{n}$ case, the last coefficient $z_n$ may be a negative integer. We denote the irreducible representation corresponding to the highest weight $(z_1, \dots, z_n)$ by $\varrho_{\SO(m)}(z_1, \dots, z_n)$.

	\begin{lem}\label{lem: SO(4) vs SU(2)SU(2)}
		An irreducible representation $\varrho(a)\otimes\varrho(b)\in \hat{\SU(2)\times\SU(2)}$ descends (by the double cover $\SU(2)\times \SU(2)\rightarrow \SO(4))$ to an irreducible representation of $\SO(4)$  if and only if $a+b$ is even. The corresponding $\SO(4)$-representation has highest weight \begin{align*}
			\frac{a+b}{2}\cdot\lambda_{1}+\frac{a-b}{2}\cdot\lambda_{2}.
		\end{align*}
	\end{lem}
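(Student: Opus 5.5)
The plan is to make the double cover $\SU(2)\times\SU(2)\to\SO(4)$ explicit via quaternions, identify its kernel, and then match maximal tori. Identify $\R^4\cong\mathbb{H}$ and $\SU(2)\times\SU(2)\cong\Sp(1)\times\Sp(1)$ acting by $(p,q)\cdot x = p\,x\,\bar q$. This gives a surjective homomorphism onto $\SO(4)$ with kernel $\{(1,1),(-1,-1)\}$. An irreducible representation $\varrho(a)\otimes\varrho(b)$ descends to $\SO(4)$ if and only if $(-1,-1)$ acts trivially. Since $-1\in\SU(2)$ acts on $\varrho(a)=\mathrm{Sym}^a\C^2$ by $(-1)^a$, the element $(-1,-1)$ acts by $(-1)^{a+b}$. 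This proves the descent criterion: $a+b$ must be even. Irreducibility is automatic, because an irreducible representation of the covering group factoring through a quotient remains irreducible for the quotient.

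For the highest weight, I will compare maximal tori. Take the torus $(e^{i\theta},e^{i\phi})$ in $\Sp(1)\times\Sp(1)$. On $\mathbb{H}=\C\oplus\C j$, the map $x\mapsto e^{i\theta}x e^{-i\phi}$ acts on $\C\cdot 1$ by $e^{i(\theta-\phi)}$ and on $\C\cdot j$ by $e^{i(\theta+\phi)}$, since $e^{i\theta} j e^{-i\phi}=j e^{-i(\theta+\phi)}$. The image therefore lies in the standard maximal torus of $\SO(4)$, with rotation angles $x_1=\theta+\phi$ and $x_2=\theta-\phi$, up to the ordering and orientation of the blocks. Hence the pullbacks satisfy $\lambda_1\mapsto \theta+\phi$ and $\lambda_2\mapsto\theta-\phi$. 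Conversely, $\theta=\tfrac{x_1+x_2}{2}$ and $\phi=\tfrac{x_1-x_2}{2}$.

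The highest weight of $\varrho(a)\otimes\varrho(b)$ on this torus is $e^{i(a\theta+b\phi)}$. Substituting gives
\begin{align*}
a\theta+b\phi=\tfrac{a+b}{2}x_1+\tfrac{a-b}{2}x_2,
\end{align*}
which is the weight $\tfrac{a+b}{2}\lambda_1+\tfrac{a-b}{2}\lambda_2$. These coefficients are integers exactly when $a+b$ is even, consistent with the first part.

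I would finish by checking dominance: the inequality $\tfrac{a+b}{2}\ge\left|\tfrac{a-b}{2}\right|$ holds, so the weight is dominant for $D_2$. The same torus computation applied to the positive roots of both factors shows that the chosen positive systems correspond. This means the image of the highest weight is again highest, not merely extremal.

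The main obstacle is conventions. The sign of $\lambda_2$ depends on the orientation of the second $2\times2$ block and on which $\Sp(1)$ factor is called ``first''. If the orientation is chosen differently, one obtains $\tfrac{a-b}{2}$ with the opposite sign, which corresponds to swapping $a$ and $b$. So I will fix the identification of $\mathbb{H}$ with $\R^4$ so that the left factor corresponds to $\theta+\phi$ and $\theta-\phi$ as above. This choice matches the standard $D_2$ dominance convention, and it is the one consistent with the embedding used later in Table~\ref{table:3sasaki_groups}.
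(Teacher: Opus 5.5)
Your proposal is correct and follows essentially the same route as the paper: the quaternionic double cover $(u,v)\mapsto(q\mapsto uqv^{-1})$, whose kernel $\pm(1,1)$ acts by $(-1)^{a+b}$, together with the torus computation giving rotation angles $\theta-\phi$ on the $(1,i)$-plane and $\theta+\phi$ on the $(j,k)$-plane. Your additional check that $\lambda_1+\lambda_2$ and $\lambda_1-\lambda_2$ pull back to the positive roots $2\theta$ and $2\phi$ of the two factors (so the highest weight maps to a highest weight, not merely an extremal one) is a step the paper leaves implicit in its phrase ``up to permuting the diagonal blocks.''
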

	\begin{proof}
		The double cover $\SU(2)\times \SU(2)\rightarrow\SO(4)$ is obtained by identifying $\R^{4}\cong \mathbb{H}$ and using quaternionic multiplication of the unit quaternions $S^{3}\cong\nolinebreak \SU(2)$ from the left and from the right
		\begin{align*}
			\pi:\SU(2)\times \SU(2)\rightarrow \SO(4),\quad (u,v)\mapsto\:( q\in \mathbb{H}\mapsto u\cdot q\cdot v^{-1}).
		\end{align*}
		The maximal torus of $\SU(2)\times \SU(2)$ is given by $\{(e^{it_{1}},e^{it_{2}})\mid t_{1},t_{2}\in \R\}$. It acts by a rotation in the $(1,i)$-plane by $e^{i(t_{1}-t_{2})}$ and by a rotation in the $(j,k)$-plane by $e^{i(t_{1}+t_{2})}$. Hence, the weights transform to the claimed form (up to permuting the diagonal blocks). The product of homogeneous polynomials over $\C^{2}$ of degree $a$ and $b$ is invariant under $\ker\pi=\pm(\id,\id)$ if and only if $a+b$ is even. 
	\end{proof}
	\begin{prop}\label{prop: bl dl spherical reps}
		The irreducible representation $\varrho(z_{1},...,z_{n})\otimes \varrho(z_{n+1})\in \hat{\SO(m)\times \SU(2)} $ is $\SO(m-4)\times \SU(2)\times \Delta\SU(2)$-spherical if and only if
		\begin{enumerate}
			\item $z_{n+1} \equiv 0 \mod 2$,
			\item $z_{i}=0$ for all $i=3,\dots,n$, and
			\item $z_{2}=\frac{z_{n+1}}{2}$ $(m=5)$ or $z_{2}\ge \frac{z_{n+1}}{2}$ $(m\ge 6)$.
		\end{enumerate}
		In those cases, the trivial representation of 
		$\mathrm{SO}(m-4) \times \mathrm{SU}(2) \times \Delta\mathrm{SU}(2)$ appears with multiplicity given by
		\[
		\text{(*)}\;
		\begin{cases}
			1,&\text{if $m=5$;} \\
			\dfrac{z_1-z_2+1}{m-5}\dbinom{z_1+m-5-\frac{z_{n+1}}{2}}{m-6}\dbinom{z_2+m-6-\frac{z_{n+1}}{2}}{m-6}
			,&\text{if $m\ge 6$.}
		\end{cases}
		\]
		This applies for $m=2n$ and $m=2n+1$.
	\end{prop}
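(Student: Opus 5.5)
By Theorem \ref{thm:operational_branching}, the tuple $\varrho(z_1,\dots,z_n)\otimes\varrho(z_{n+1})$ is spherical exactly when the restriction of $\varrho_{\SO(m)}(z_1,\dots,z_n)$ to $L=\SO(m-4)\times\SO(4)$ contains $1_{\SO(m-4)}\otimes\tau$. Here $\tau$ is the $\SO(4)$-representation which, pulled back to $\SU(2)\times\SU(2)$, is trivial on the factor $\Sp(1)\subset K$ and equals $\varrho(z_{n+1})^*\cong\varrho(z_{n+1})$ on the fiber factor. Lemma \ref{lem: SO(4) vs SU(2)SU(2)} with $(a,b)=(z_{n+1},0)$ or $(0,z_{n+1})$ shows first that $z_{n+1}$ must be even, which is condition (1). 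It also identifies $\tau$ as the $\SO(4)$-module with highest weight $\tfrac{z_{n+1}}{2}(\lambda_1\pm\lambda_2)$. The sign is fixed by the embedding convention, and the multiplicity count is symmetric in it. So the task reduces to computing
\[
\mult\big(1\otimes\varrho_{\SO(4)}(\tfrac{z_{n+1}}{2},\pm\tfrac{z_{n+1}}{2}),\ \varrho_{\SO(m)}(z)\myrestriction_{\SO(m-4)\times\SO(4)}\big).
\]

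I would compute this with the chain $\SO(m)\downarrow\SO(m-1)\downarrow\SO(m-2)\downarrow\SO(m-3)\downarrow\SO(m-4)$, applying the interlacing rules of Proposition \ref{prop: branching GoWa} b), c) four times. I would use a Gelfand--Tsetlin-type argument in two steps.
\begin{itemize}
\item First, take the $\SO(m-4)$-fixed vectors. Iterated interlacing with final pattern $0$ forces $z_i=0$ for $i\ge 3$, since four steps of interlacing can only clear two rows. This gives condition (2).
\item Second, the $\SO(m-4)$-invariants form an $\SO(4)$-module $W$. By the classical description of $\SO(m)$-representations spherical for $\SO(m-4)$, equivalently by iterating interlacing through $\SO(4)$ as an auxiliary chain, $W$ decomposes as a sum of $\varrho_{\SO(4)}(p,q)$. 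Reading off the intermediate patterns gives the multiplicity of $(p,\pm p)$ with $p=z_{n+1}/2$ as the number of interlacing chains $(z_1,z_2,0,\dots)\succ(x_1,x_2,0,\dots)\succ(y_1,y_2,0,\dots)\succ\cdots$ that end compatibly with $\SO(4)$-weight $(p,\pm p)$.
\end{itemize}

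Rather than this direct chain, a cleaner route is a reciprocity/see-saw argument. The $\SO(m-4)$-invariants in $\varrho_{\SO(m)}(z_1,z_2)$ are realised as harmonic polynomials on $4\times m$ matrices, via $\SO(m)\times\SO(4)$-type Howe duality. The $\SO(4)$-isotypic multiplicity of $(p,p)$ then becomes an $\SO(m-4)$-dimension count. Concretely, I expect
\[
\mult=\dim\varrho_{\SO(m-4)}\big(z_1-p,\,z_2-p,0,\dots\big)\quad\text{(suitably shifted)}.
\]
This forces $z_2\ge p$, which is condition (3) for $m\ge6$. For $m=5$ we have $m-4=1$ and $\SO(1)$ is trivial, so only the equality $z_2=p$ survives from interlacing, with multiplicity one. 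Evaluating the Weyl dimension formula of $\SO(m-4)$ on a two-row weight gives the product $\frac{z_1-z_2+1}{m-5}\binom{\cdot}{m-6}\binom{\cdot}{m-6}$ in (*). The binomial arguments $z_1+m-5-p$ and $z_2+m-6-p$ match the two-row dimension of $\SO(m-4)$ evaluated at $(z_1-p,z_2-p)$.

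The main obstacle is this middle step. I need to establish rigorously that the $\SO(4)$-weight $(p,\pm p)$ component of the $\SO(m-4)$-invariants has exactly the multiplicity of a two-row $\SO(m-4)$-dimension. This holds uniformly for $B$ and $D$ types, and for small $m$ where $\SO(m-4)$ is abelian or of type $D_2$ and the dimension formula degenerates. I would first try to verify it directly by counting the interlacing patterns in the four-step chain, tracking the $\SO(4)$-weight through the Gelfand--Tsetlin pattern. I would check $m=5,6,7$ by hand before attempting the general combinatorial identity.
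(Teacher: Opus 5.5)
Your reduction in the first paragraph is correct. Unlike the paper's induction, it keeps the requirement that $\SO(m-4)$ act trivially. The argument fails at the next step: the claim that four interlacing steps ``can only clear two rows'' is false. Each step $\SO(k)\downarrow\SO(k-1)$ absorbs one row, so $\SO(m-4)$-invariants occur in representations with $z_3,z_4\neq 0$. For example, along $\SO(7)\supset\SO(6)\supset\SO(5)\supset\SO(4)\supset\SO(3)$ there is the chain $(1,1,1)\succ(1,1,0)\succ(1,0)\succ(0,0)\succ(0)$. Indeed $\Lambda^3\R^7=\varrho_{\SO(7)}(1,1,1)$ contains $\Lambda^3\R^3\otimes\Lambda^0\R^4$, which is fixed by all of $\SO(3)\times\SO(4)$. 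So $\varrho_{\SO(7)}(1,1,1)\otimes\varrho(0)$ is spherical, and condition (2) is not even necessary for $m=7$. Your identification of (*) with a two-row $\SO(m-4)$-dimension is also wrong. (*) is exactly $\dim\varrho_{\GL(m-4)}(z_1-p,z_2-p)$, and the two already differ for $m=9$ and $(z_1-p,z_2-p)=(2,0)$: (*) gives $15$, whereas $\dim\varrho_{\SO(5)}(2,0)=14$.

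These gaps cannot be closed, because the statement itself fails for every $m\ge 6$. Take $z=(1,0,\dots,0)$ and $z_{n+1}=0$. Conditions (1)--(3) hold and (*) predicts multiplicity $m-4$. But $\R^m\myrestriction_{\SO(m-4)\times\SO(4)}=\R^{m-4}\oplus\R^4$ has no nonzero fixed vector once $m-4\ge 2$. For $m=6$ this also contradicts the paper's own Type $A$ result for $n=3$: that describes the same space via $\SU(4)\to\SO(6)$ and excludes $\varrho_{\SU(4)}(1,1,0)\cong\R^6\otimes\C$.

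The number $m-4$ is the multiplicity of the trivial representation in $\R^m\myrestriction_{\SO(4)}$. In general (*) equals $\mult\big(\varrho_{\SO(4)}(p,-p),\varrho_{\SO(m)}(z_1,z_2)\myrestriction_{\SO(4)}\big)$. This is what the paper's induction along $\SO(m)\supset\SO(m-1)\supset\dots\supset\SO(4)$ computes. The intermediate groups there contain the $\SO(4)$-block rather than the $\SO(m-4)$-block, so triviality on $\SO(m-4)$ is silently dropped. Likewise, the quoted interlacing $|z_{i+1}|\le|x_i|$ with $x_3=\dots=x_n=0$ yields only $z_4=\dots=z_n=0$, not $z_3=0$.

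Only $m=5$ is correct as stated, and there your argument works: $z_1\ge p\ge z_2\ge p$ forces $z_2=p$ with multiplicity one. A correct version for $m\ge 6$ must compute the actual $\SO(4)$-module $\varrho_{\SO(m)}(z)^{\SO(m-4)}$ for weights with up to four nonzero entries, for instance honestly via the see-saw route you sketched. The hand checks at $m=6,7$ you planned would have exposed the discrepancy.
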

	\begin{proof}
		We place $\SU(2)$ upper left and $\SO(m-4)$ lower right. The embedding $\SU(2)\rightarrow \SO(4)$ is given by the action of unit quaternions from the \emph{left} on the quaternions, see \cite[Proof of Prop 9.1]{GoertschesRoschigStecker23} . The branching rules for $K\cdot H=\SO(4)\times \SO(m-4)$ can be found in \cite[Thm. 10]{Chami} if $m$ is even and in \cite[Thm. 11]{Chami} if $m$ is odd. Although \cite{Chami} stated an if and only if condition, one of those conditions is very tricky to verify. However, we investigate the following two necessary conditions stated in \cite{Chami}: If the irreducible representation $\varrho_{\SO(4)}(x_{1},x_{2})\otimes \varrho_{\SO(m-4)}(x_{3},...,x_{n})$ occurs in $\varrho_{\SO(m)}(z_{1},...,z_{n})$, it needs to satisfy
		\begin{align*}
			|z_{i+1}|\leq |x_{i}|\leq z_{i-1},\quad \text{for}\quad 2\leq i\leq n-1,\quad \text{ and }|x_{n}|\leq z_{n-1}.
		\end{align*}
        According to Theorem \ref{thm:operational_branching}, the representation $\varrho_{\SO(m)}(z_{1},...,z_{n})\otimes \varrho_{\SU(2)}(z_{n+1})$ contributes to the spectrum if and only if the restriction of $\varrho_{\SO(m)}(z_{1},...,z_{n})$ to the base isotropy group $K \cdot H \cong \SO(m-4) \times \SO(4)$ contains a component of the form $1_K \otimes \varrho_{\SO(4)}(x_1, x_2)$, where the $\SO(4)$-representation restricts to the dual of the fiber representation $\varrho_{\SU(2)}(z_{n+1})$. Since representations of $\SU(2)$ are self-dual, the target fiber component is simply $\varrho_{\SU(2)}(z_{n+1})$.
        For the $K$-part $(\SO(m-4))$ to be trivial, we must have $x_3 = \dots = x_n = 0$. By the branching rule stated above, this is equivalent to
 $$ x_3=\dots=x_n=0 \quad \Rightarrow \quad z_3=\dots=z_n=0. $$
 We further focus on the representation $\varrho_{\SO(4)}(x_{1},x_{2})$ of the remaining $\SO(4)$ block. By Lemma \ref{lem: SO(4) vs SU(2)SU(2)}, it corresponds to the $\SU(2)\times\SU(2)$-representation
 $$\hat{\SO(4)}\ni\varrho_{\SO(4)}(x_{1},x_{2})\cong \varrho_{\SU(2)}(x_{1}+x_{2})\otimes \varrho_{\SU(2)}(x_{1}-x_{2})\in \hat{\SU(2)\times\SU(2)}.$$
 As the Lie group generated by the Reeb vector fields acts from the right on the quaternions (see \cite[Proof of Prop. 9.1]{GoertschesRoschigStecker23}), the $H$-factor embeds into $\SO(4)$ as the right $\SU(2)$-factor. For this component to match our target $1 \otimes \varrho_{\SU(2)}(z_{n+1})$, the left $\SU(2)$-factor must act trivially, meaning $x_1 + x_2 = 0$, so $x_1 = -x_2$. Simultaneously, the right factor must match the fiber representation, yielding $x_1 - x_2 = z_{n+1}$. Combining these conditions gives $z_{n+1} = 2x_1$ which implies that  $x_1 = z_{n+1}/2$ and $x_2 = -z_{n+1}/2$. 
       The interlacing condition $z_2 \ge |x_2|$ from the $\SO(5) \downarrow \SO(4)$ branching (see Proposition \ref{prop: branching GoWa})  then implies $z_2 \ge z_{n+1}/2$. In summary, for a representation $\varrho(z_{1},...,z_{n})\otimes \varrho(z_{n+1})$ to be spherical, it is necessary that
		\begin{align*}
			z_{3}=\dots=z_{n}=0, \quad z_{n+1} \text{ is even, and } \quad z_2 \ge z_{n+1}/2.
		\end{align*}
		
		 We prove that those conditions are sufficient by induction and by using the standard branching rules which are summarized in Proposition \ref{prop: branching GoWa}:\\ \noindent
		 \textbf{Case $m=5$:} the branching from $\mathrm{SO}(5)$ to $\mathrm{SO}(4)$ says that 
		 \[
		 \varrho_{\mathrm{SO}(5)}(z_1,z_2)=\bigoplus \varrho_{\mathrm{SO}(4)}(x_1,x_2)
		 \]
		 where the sum is multiplicity free and runs over all $x_1,x_2$ satisfying 
		 \[
		 z_1\ge x_1\ge z_2 \ge |x_2|.
		 \]
		 Thus, $x_1,x_2=x,-x$ appears in the sum if and only if $x=z_2$ (which is condition (3) above), 
		 in which case it does with multiplicity 1.

		 \

		 \noindent
		 \textbf{Case $m=6$:} the branching from $\mathrm{SO}(6)$ to $\mathrm{SO}(5)$ says that 
		 \[
		 \varrho_{\mathrm{SO}(6)}(z_1,z_2,0)=\bigoplus \varrho_{\mathrm{SO}(5)}(y_1,y_2)
		 \]
		 where the sum is multiplicity free and runs over all $y_1,y_2$ satisfying 
		 \[
		 z_1\ge y_1\ge z_2 \ge y_2 \ge |0|.
		 \]
		 Now,  case $m=5$ says that 
		 $\varrho_{\mathrm{SO}(4)}(x,-x)$ will appear in the above sum for all such pairs $y_1,y_2$ with $x=y_2$ (with multiplicity 1  in each summand). 
		 
		 Therefore, $\text{mult}\big(\varrho_{\mathrm{SO}(4)}(x,-x),\varrho_{\mathrm{SO}(6)}(z_1,z_2,0)\big)>0$
		 if and only if $z_2\ge x$ (which is condition (3) above), and in this case
		 \begin{align*}
		 	\text{mult}\big(\varrho_{\mathrm{SO}(4)}(x,-x),\varrho_{\mathrm{SO}(6)}(z_1,z_2,0)\big) 
		 	& = |\{\text{pairs }y_1,y_2:  z_1\ge y_1\ge z_2\text{ and } x=y_2\}| \\
		 	& = z_1-z_2+1,
		 \end{align*}
		 which coincides with (*).

		 \

		 \noindent
		 \textbf{Case $m=7$:} the branching from $\mathrm{SO}(7)$ to $\mathrm{SO}(6)$ says that 
		 \[
		 \varrho_{\mathrm{SO}(7)}(z_1,z_2,0)=\bigoplus \varrho_{\mathrm{SO}(6)}(y_1,y_2,y_3)
		 \]
		 where the sum is multiplicity free and runs over all $y_1,y_2,y_3$ satisfying 
		 \[
		 z_1\ge y_1\ge z_2 \ge y_2 \ge 0\ge |y_3|.
		 \]
		 Thus
		 \[
		 \varrho_{\mathrm{SO}(7)}(z_1,z_2,0)=\bigoplus \varrho_{\mathrm{SO}(6)}(y_1,y_2,0)
		 \]
		 where the sum is multiplicity free and runs over all $y_1,y_2$ satisfying 
		 \[
		 z_1\ge y_1\ge z_2 \ge y_2 \ge 0.
		 \]
		 Now,  case $m=6$ says that 
		 $\varrho_{\mathrm{SO}(4)}(x,-x)$ will appear in the above sum for all such pairs $y_1,y_2$ satisfying $y_2\ge x$, 
		 with multiplicity $y_1-y_2+1$ in each summand. 
		 
		 Therefore, $\text{mult}(\varrho_{\mathrm{SO}(4)}(x,-x),\varrho_{\mathrm{SO}(7)}(z_1,z_2,0))>0$
		 if and only if $z_2\ge x$ (which is condition (3) above), and in this case
		 \begin{align*}
		 	\text{mult}(\varrho_{\mathrm{SO}(4)}(x,-x),\varrho_{\mathrm{SO}(7)}(z_1,z_2,0))
		 	& =\sum_{y_1=z_2}^{z_1}\sum_{y_2=x}^{z_2}y_1-y_2+1 \\
		 	& =\frac12(z_1-z_2+1)(z_1-x+2)(z_2-x+1),
		 \end{align*}
		 which coincides with (*).

		 \

		 \noindent
		 \textbf{Case $m$ arbitrary:} 
		 The same argument used above yields
		 \[
		 \varrho_{\mathrm{SO}(m)}(z_1,z_2)=\bigoplus \varrho_{\mathrm{SO}(m-1)}(y_1,y_2)
		 \]
		 where the sum is multiplicity free and runs over all $y_1,y_2$ satisfying 
		 \[
		 z_1\ge y_1\ge z_2 \ge y_2 \ge 0.
		 \]
		 By induction hypothesis, 
		 $\text{mult}\big(\varrho_{\mathrm{SO}(4)}(x,-x),\varrho_{\mathrm{SO}(m-1)}(y_1,y_2)\big)>0$
		 if and only if $y_2\ge x$ (which implies $z_2\geq x$ by interlacing), and in this case
		 \begin{align*}
		 	\text{mult}\big(\varrho_{\mathrm{SO}(4)}(x,-x),\varrho_{\mathrm{SO}(m)}(z_1,z_2)\big)
		 	=\sum_{y_1=z_2}^{z_1}\sum_{y_2=x}^{z_2}\text{mult}\big(\varrho_{\mathrm{SO}(4)}(x,-x),\varrho_{\mathrm{SO}(m-1)}(y_1,y_2)\big). 
		 \end{align*}
		 It follows by induction that 
		 \[
		 \text{mult}\big(\varrho_{\mathrm{SO}(4)}(x,-x),\varrho_{\mathrm{SO}(m)}(z_1,z_2)\big)=
		 \frac{z_1-z_2+1}{m-5}\binom{z_1+m-5-x}{m-6}\binom{z_2+m-6-x}{m-6}
		 \]
		 satisfies the recursive identity above.
	\end{proof}
	\begin{thm}\label{thm: spectrum type B,D}
		The spectrum of $(\SO(m)/(\SO(m-4)\times \SU(2))),g_{t_{0},t_{1}})$ is given by the collection of numbers 
		\begin{align*}			\eta(z_{1},z_2,z_{n+1})=4\alpha\delta \cdot \left({z_{1}(z_{1}+m-2)+z_{2}(z_{2}+m-4)}\right)+ \delta(\delta-2\alpha)\cdot\left({z^{2}_{n+1}+2z_{n+1}}\right),
		\end{align*}
		where the parameters $z_1, z_2, z_{n+1} \ge 0$ are integers satisfying
\begin{align*}
	z_1 \ge z_2, \quad z_{n+1} \equiv 0 \pmod 2 \quad \text{and} \quad
	\begin{cases}
		z_2 = z_{n+1}/2, & \text{if } m=5; \\[1mm]
		z_2 \ge z_{n+1}/2, & \text{if } m \ge 6.
	\end{cases}
\end{align*}
The multiplicities are given by
\begin{enumerate}
    \item Case $m=5$:
    \begin{align*}
      \mult (\eta(z_{1},z_2,2z_2)) =  \frac{1}{6} (z_1 + z_2 + 2)(z_1 - z_2 + 1)(2z_1 + 3)(2z_2 + 1)^2,
    \end{align*}
    \item Case $m\geq 6$:
     \begin{align*}
        \mult (\eta(z_{1},z_2,z_{n+1})) 
        &= \frac{(z_{n+1}+1)\cdot (z_1+z_2 + m-3)(z_1-z_2 + 1)}{(m-2)(m-3)(m-4)^2}\;  \\[2mm]
        &\hspace{1cm} \cdot (2z_1+m-2)(2z_2+m-4) 
        \binom{z_1+m-4}{m-5}\binom{z_2+m-5}{m-5}\\
        &\hspace{1cm}\cdot \dfrac{z_1-z_2+1}{m-5}\dbinom{z_1+m-5-\frac{z_{n+1}}{2}}{m-6}\dbinom{z_2+m-6-\frac{z_{n+1}}{2}}{m-6}.
    \end{align*}
\end{enumerate}
 
	\end{thm}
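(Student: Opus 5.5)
The plan is to combine Theorem~\ref{thm:operational_branching} (in the specialized form \eqref{eq:sasaki_spectrum_formula}) with the classification of spherical pairs in Proposition~\ref{prop: bl dl spherical reps}. By that proposition, the admissible tuples $(\varrho_{\SO(m)}(z_1,\dots,z_n),\varrho_{\SU(2)}(z_{n+1}))$ are exactly those with $z_3=\dots=z_n=0$, $z_{n+1}$ even, and $z_2=z_{n+1}/2$ ($m=5$) or $z_2\ge z_{n+1}/2$ ($m\ge6$). This yields precisely the stated parameter range. What remains is twofold: evaluate the two Casimir eigenvalues in the correct normalization, and assemble the multiplicity as $\mult\cdot\dim\varrho_G\cdot\dim\varrho_H$.

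\textbf{Casimir eigenvalues.} First I compute $\dim G/K=\tfrac{m(m-1)}{2}-\tfrac{(m-4)(m-5)}{2}-3=4m-13$, so $d=m-4$. As in the type $A$ computation, I take the 3-Sasaki metric $-4(d+2)B$ from \cite{GoertschesRoschigStecker23}. I express the Killing form of $\so(m)$ as $B=(m-2)\tr$, which is a multiple of the inner product $\langle X,Y\rangle=-\tfrac12\tr(XY)$ for which the $\lambda_i$ are orthonormal. I then check that, in the parallel case $2\alpha\delta=\delta^2=1$, the induced form on $\t^*$ is a fixed multiple of the Euclidean one.

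With $\rho=\sum_i(\tfrac m2-i)\lambda_i$, valid uniformly for $m=2n$ and $m=2n+1$, Freudenthal's formula $\langle\Lambda,\Lambda+2\rho\rangle$ for $\Lambda=z_1\lambda_1+z_2\lambda_2$ gives $z_1(z_1+m-2)+z_2(z_2+m-4)$. The normalization should produce the factor $2$, i.e.\ $c_Q(\varrho_G)=2\langle\Lambda,\Lambda+2\rho\rangle$. For the fiber, I identify the right $\SU(2)\subset\SO(4)\subset\SO(m)$, whose root is $\lambda_1-\lambda_2$ up to the block permutation in Lemma~\ref{lem: SO(4) vs SU(2)SU(2)}. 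I compute $\langle(\lambda_1-\lambda_2)(1+\tfrac{z_{n+1}}2),(\lambda_1-\lambda_2)\tfrac{z_{n+1}}2\rangle=\tfrac{z_{n+1}(z_{n+1}+2)}{2}$, so that $c_{Q\restriction\su(2)}=z_{n+1}(z_{n+1}+2)$. Substituting into \eqref{eq:sasaki_spectrum_formula} gives the stated $\eta$.

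\textbf{Multiplicities.} The multiplicity is the branching multiplicity $(*)$ from Proposition~\ref{prop: bl dl spherical reps}, times $(z_{n+1}+1)$, times the Weyl dimension of $\varrho_{\SO(m)}(z_1,z_2,0,\dots,0)$.
\begin{itemize}
\item For $m=5$: the $B_2$ Weyl formula gives $\tfrac16(z_1+z_2+2)(z_1-z_2+1)(2z_1+3)(2z_2+1)$. Multiplying by $z_{n+1}+1=2z_2+1$ yields the stated formula.
\item For $m\ge6$: only the factors involving indices $1,2$ survive nontrivially. The pair $(1,2)$ contributes $(z_1-z_2+1)(z_1+z_2+m-3)/(m-3)$. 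The short-root factor (type $B$) or the pairs $(i,j)$ with the $z_i\pm z_j$ roots combine to $(2z_1+m-2)(2z_2+m-4)$ up to constants. The products over $j\ge3$ of consecutive integers become $\binom{z_1+m-4}{m-5}\binom{z_2+m-5}{m-5}$.
\end{itemize}

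The main obstacle is showing that this simplification is uniform in the parity of $m$. My first attempt is to verify the closed form directly against the dimensions of the two-row representations (for instance via the known formula for $\SO(m)$ representations with highest weight $(z_1,z_2)$, or by Weyl's formula separately for $B_n$ and $D_n$ with the same $\rho$), and then to check the constant $\tfrac1{(m-2)(m-3)(m-4)^2}$ on small cases $m=6,7$. A secondary obstacle is the metric normalization, where the factor between $B$ and $\langle\cdot,\cdot\rangle$ on the dual must be tracked carefully.
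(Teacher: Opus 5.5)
Your proposal is correct and follows essentially the same route as the paper. It takes the admissible tuples and the branching multiplicity $(*)$ from Proposition~\ref{prop: bl dl spherical reps}, and uses Freudenthal's formula normalized via the Goertsches--Roschig--Stecker metric $-4(d+2)B$; your factor $2$ relative to orthonormal $\lambda_i$ is exactly the paper's $4\langle\cdot,\cdot\rangle$ with $\langle\lambda_i,\lambda_j\rangle=\tfrac12\delta_{ij}$. The multiplicity is then the Weyl dimension times $(z_{n+1}+1)$ times $(*)$, as in the paper.

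The only differences are cosmetic. You use the uniform $\rho=\sum_i(\tfrac m2-i)\lambda_i$, and the root $\lambda_1-\lambda_2$ instead of the paper's highest root $\lambda_1+\lambda_2$. This is harmless, since both are long roots, Weyl-conjugate in $\SO(m)$ for $m\ge5$, and give the same $\su(2)$ Casimir.
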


	\begin{proof}
		Note that with respect to the Euclidean inner product $\langle A,B\rangle=-\tr(A\cdot B)$ the metrical dual of the projection $\lambda_{i}$ defined in the beginning of this section, is given by $E_{i}$, where $$E_{i}=\begin{pmatrix}
			0& -\frac{1}{2}\\
			\frac{1}{2}&0
		\end{pmatrix}$$ being the $i$-th diagonal block. Hence, we have that
		\begin{align*}
			\langle \lambda_{i},\lambda_{j}\rangle=\frac{\delta_{ij}}{2}.
		\end{align*} 
		The positive roots are given by
		\begin{align*}
			\{\lambda_{i}\pm \lambda_{j}\:\mid \: 1\leq i<j\leq n\}\cup \{\lambda_{i}\:\mid \: 1\leq i\leq n\},\quad \text{$m=2n+1$ is odd, i.e. type $B_{n}$}\\
			\{\lambda_{i}\pm \lambda_{j}\:\mid \: 1\leq i<j\leq n\},\quad 1\leq i<j\leq n,\quad \text{$m=2n$ is even, i.e. type $D_{n}$.}
		\end{align*}
		We obtain that
		\begin{align*}
			2\rho_{B_{n}}=\sum_{j=1}^{n}\lambda_{j}+\sum_{1\leq i<j\leq n}\lambda_{i}-\lambda_{j}+\sum_{1\leq i<j\leq n}\lambda_{i}+\lambda_{j}=\sum_{k=1}^{n}(2n-2k+1)\lambda_{k}\\
			2\rho_{D_{n}}=\sum_{1\leq i<j\leq n}\lambda_{i}-\lambda_{j}+\sum_{1\leq i<j\leq n}\lambda_{i}+\lambda_{j}=\sum_{k=1}^{n-1}2(n-k)\lambda_{k}.
		\end{align*}
		We compute the Freudenthal formula for $\varrho_{\SO(m)}(z_{1},z_{2})$. We start with respect to $B_{n}$:
		\begin{align*}
			\langle (z_{1}+2n-1)\lambda_{1}+(z_{2}+2n-3)\lambda_{2},z_{1}\lambda_{1}+z_{2}\lambda_{2}\rangle=\frac{z_{1}(z_{1}+2n-1)+z_{2}(z_{2}+2n-3)}{2}
		\end{align*}
		and get with respect to $D_{n}$:
		\begin{align*}
			\langle (z_{1}+2n-2)\lambda_{1}+(z_{2}+2n-4)\lambda_{2},z_{1}\lambda_{1}+z_{2}\lambda_{2}\rangle=\frac{z_{1}(z_{1}+2n-2)+z_{2}(z_{2}+2n-4)}{2}.
		\end{align*}
		As $m=2n+1$ in case of $B_{n}$ and $m=2n$ in the case of $D_{n}$, we can summarize the formulas into
		\begin{align*}
			\langle z_{1}\lambda_{1}+z_{2}\lambda_{2}+2\rho,z_{1}\lambda_{1}+z_{2}\lambda_{2}\rangle=\frac{z_{1}(z_{1}+m-2)+z_{2}(z_{2}+m-4)}{2}.
		\end{align*}
		The unique maximal root     
        which lies in the fundamental Weyl chamber is in both cases the root $ \lambda_1+\lambda_2$ which is the root of $\su(2)\subset \so(m)$. We compute its Casimir constant:
		\begin{align*}
			\left\langle (\frac{z_{n+1}}{2}+1)(\lambda_{1}+\lambda_{2}),\frac{z_{n+1}}{2}(\lambda_{1}+\lambda_{2})\right\rangle= \frac{\frac{z_{n+1}^{2}}{2}+z_{n+1}}{2}=\frac{z^{2}_{n+1}+2z_{n+1}}{4}.
		\end{align*}
		Recall from \cite{GoertschesRoschigStecker23} that the $3$-Sasaki metric on the orthogonal complement of $\su(2)^{*}$ in $\so(m)^{*}$ is given by $g=-4(d+2)B$, where $d=(\dim(G/K)-3)/4$ and $B=-\frac{1}{(m-2)}\langle \cdot,\cdot\rangle$ is the form on the dual space induced by the Killing form. Let us translate $d$ to $m$: 
		\begin{align*}
			d=\frac{\dim((\SO(m)/(\SO(m-4)\times \SU(2)))-3}{4}=\frac{(4m - 13)-3}{4}=\frac{4m-16}{4}=m-4.
		\end{align*}
		In total we have that
		\begin{align*}
			g=-4(m-2)B=\frac{4(m-2)}{m-2}\langle\cdot,\cdot\rangle=4 \langle\cdot,\cdot\rangle
		\end{align*}
		and observe that $\varrho_{\SO(m)}(z_{1},z_{2})\otimes \varrho_{\SU(2)}(z_{n+1})$ produces with respect to the $3$-$(\alpha,\delta)$-Sasaki metric the eigenvalue:
		\begin{align*}
			2\alpha\delta \left(2({z_{1}(z_{1}+m-2)+z_{2}(z_{2}+m-4)})-({z^{2}_{n+1}+2z_{n+1}})\right)+ \delta^{2}\left({z^{2}_{n+1}+2z_{n+1}}\right).
		\end{align*}
        The dimension formula for irreducible $\SO(m)$ representations can be found in \cite[Eq. (24.29)]{Fulton_Harris} if $m$ is odd and in \cite[Eq. (24.41)]{Fulton_Harris} if $m$ is even. The dimension of $\varrho_{\SO(m)}(z_{1},z_{2})\otimes \varrho_{\SU(2)}(z_{n+1})$ is given by:
\begin{enumerate}
    \item Case $m=5$:
    \begin{align*}
         &(z_{3}+1) \cdot \frac{1}{6}(z_1 - z_2 + 1)(z_1 + z_2 + 2)(2z_1 + 3)(2z_2 + 1).
    \end{align*}

    \item Case $m=6$:
    \begin{align*}
         &(z_{4}+1) \cdot \frac{1}{12}(z_1 - z_2 + 1)(z_1 + z_2 + 3)(z_1 + 2)^2 (z_2 + 1)^2.
    \end{align*}

    \item Case $m=2n \ge 8$:
    We define $l_i = z_i + n - i$ and $m_i = n - i$. The dimension formula is given by:
    \begin{align*}
         &(z_{n+1}+1) \cdot \frac{l_1^2 - l_2^2}{m_1^2 - m_2^2} \cdot \prod_{j=3}^{n} \frac{(l_1^2 - m_j^2)(l_2^2 - m_j^2)}{(m_1^2 - m_j^2)(m_2^2 - m_j^2)}.
    \end{align*}

    \item Case $m=2n+1 \ge 7$:
    We define $l_i = z_i + n - i + \frac{1}{2}$ and $m_i = n - i + \frac{1}{2}$. The dimension formula is given by:
    \begin{align*}
         & (z_{n+1}+1) \cdot \frac{l_1 l_2}{m_1 m_2} \cdot \frac{l_1^2 - l_2^2}{m_1^2 - m_2^2} \cdot \prod_{j=3}^{n} \frac{(l_1^2 - m_j^2)(l_2^2 - m_j^2)}{(m_1^2 - m_j^2)(m_2^2 - m_j^2)}.
    \end{align*}
\end{enumerate}
        Remarkably, the formulas for the even and odd cases are the same and can also be written as follows:
    \begin{align*}
        &\frac{(z_{n+1}+1) \cdot (z_1+z_2 + m-3)(z_1-z_2 + 1)}{(m-2)(m-3)(m-4)^2}\;  \\[1mm]
        &\hspace{1cm} \cdot (2z_1+m-2)(2z_2+m-4) 
        \binom{z_1+m-4}{m-5}\binom{z_2+m-5}{m-5}.
    \end{align*}
 
Multiplying these dimension formulas with the multiplicity formulas of the trivial representation stated in Proposition \ref{prop: bl dl spherical reps}, yields the wished result.
	\end{proof}
	\subsection{$3$-$(\alpha,\delta)$-Sasaki manifolds of type $C_{n}$}
	We follow the parameterization of the highest weights for the Lie group $G = \operatorname{Sp}(n)$ as detailed in \cite[Sec. 2]{Chami_Spn}. The complexified Lie algebra of $G$ is denoted by $\mathfrak{g}$.
	A Cartan subalgebra $\mathfrak{t}$ of $\mathfrak{g}$ can be chosen as the set of diagonal matrices:
	$$ \mathfrak{t} = \{ \operatorname{diag}(x_1, \dots, x_n, -x_1, \dots, -x_n) : x_j \in \mathbb{C} \}.$$
	The elements $\lambda_j$ are considered as elements of the dual space $\mathfrak{t}^*$, forming a basis where $\lambda_j$ maps a diagonal matrix $H = \operatorname{diag}(x_1, \dots, x_n, -x_1, \dots, -x_n)$ to its $j$-th component $x_j$, i.e., $\lambda_j(H) = x_j$.
	
	The root system of $G$ with respect to $\mathfrak{t}$ is given by:
	$$ R = \{ \pm \lambda_i \pm \lambda_j : 1 \le i < j \le n \} \cup \{ \pm 2\lambda_i : 1 \le i \le n \}.$$
	The set of positive roots can be chosen as:
	$$ R^+ = \{ \lambda_i \pm \lambda_j : 1 \le i < j \le n \} \cup \{ 2\lambda_i : 1 \le i \le n \}.$$
	A set of simple roots for $G$ is:
	$$ \alpha_1 = \lambda_1 - \lambda_2, \alpha_2 = \lambda_2 - \lambda_3, \dots, \alpha_{n-1} = \lambda_{n-1} - \lambda_n, \alpha_n = 2\lambda_n.$$
	An irreducible representation of $G$ is uniquely determined by its highest weight $\Lambda$. Any dominant weight $\Lambda$ for $(\mathfrak{g}, \mathfrak{t})$ corresponding to an irreducible representation of $G$ has the form:
	$$ \Lambda = h_1\lambda_1 + h_2\lambda_2 + \dots + h_n\lambda_n, $$
	where the coefficients $h_i$ are integers $(h_i \in \mathbb{Z})$ satisfying the dominance conditions:
	$$ h_1 \ge h_2 \ge \dots \ge h_n \ge 0. $$
	
	\subsubsection{The subgroup $K\cdot H = Sp(1) \times Sp(n-1)$}
	We consider the subgroup $$K\cdot H = \operatorname{Sp}(1) \times \operatorname{Sp}(n-1)\subset G=\operatorname{Sp}(n).$$ The complexified Lie algebra of $K\cdot H$ is $ \mathfrak{sp}(1)_{\mathbb{C}} \oplus \mathfrak{sp}(n-1)_{\mathbb{C}}$.
	The Cartan subalgebra $\mathfrak{t}$ of $\mathfrak{g}$ contains a Cartan subalgebra for $\mathfrak{sp}(1)_{\mathbb{C}} \oplus \mathfrak{sp}(n-1)_{\mathbb{C}}$. Specifically, we can identify:
	\begin{itemize}
		\item $\mathfrak{t}_1 = \{ \operatorname{diag}(x_1, 0, \dots, 0, -x_1, 0, \dots, 0) : x_1 \in \mathbb{C} \}$ as a Cartan subalgebra for the $\mathfrak{sp}(1)_{\mathbb{C}}$ factor. The corresponding functional is $\lambda_1$.
		\item $\mathfrak{t}_{n-1} = \{ \operatorname{diag}(0, x_2, \dots, x_n, 0, -x_2, \dots, -x_n) : x_j \in \mathbb{C} \text{ for } j=2,\dots,n \}$ as a Cartan subalgebra for the $\mathfrak{sp}(n-1)_{\mathbb{C}}$ factor. The corresponding functionals are $\lambda_2, \dots, \lambda_n$.
	\end{itemize}
	An irreducible representation of $K\cdot H$ is determined by a pair of highest weights, one for $\operatorname{Sp}(1)$ and one for $\operatorname{Sp}(n-1)$.
	Any dominant weight $\Lambda'$ for $\mathfrak{k}$ corresponding to an irreducible representation of $K$ can be written in terms of the basis $\{\lambda_1, \dots, \lambda_n\}$ of $\mathfrak{t}^*$ as:
	$$ \Lambda' = k_1\lambda_1 + k_2\lambda_2 + \dots + k_n\lambda_n, $$
	where the coefficients $k_i$ are integers $(k_i \in \mathbb{Z})$ satisfying the dominance conditions for each factor:
	\begin{itemize}
		\item For the $\operatorname{Sp}(1)$ factor (with highest weight $k_1\lambda_1)$: $k_1 \ge 0$.
		\item For the $\operatorname{Sp}(n-1)$ factor (with highest weight $k_2\lambda_2 + \dots + k_n\lambda_n$ relative to the basis $\lambda_2, \dots, \lambda_n)$: $k_2 \ge k_3 \ge \dots \ge k_n \ge 0$.
	\end{itemize}
	Thus, the conditions for $\Lambda'$ to be a dominant weight for $K = \operatorname{Sp}(1) \times \operatorname{Sp}(n-1)$ are:
	$$ k_1 \ge 0 \quad \text{and} \quad k_2 \ge k_3 \ge \dots \ge k_n \ge 0. $$
\begin{prop}\label{prop: cl spherical reps}
		The representation $\varrho(z_{1},..,z_{n})\otimes \varrho(z_{n+1})\in \hat{\Sp(n)}\times \hat{\Sp(1)}$ is $\Sp(n-1)\times\Delta \Sp(1)$-spherical if and only if
		\begin{align*}
			z_{3}=...=z_{n}=0,\quad z_{n+1}=z_{1}-z_{2}.
		\end{align*}
		Furthermore, it is spherical with respect to the quotient $(\Sp(n-1)\times \Z_{2})\times\Delta \Sp(1)$ if and only if it additionally satisfies the parity condition
        \begin{align*}
            z_1 \equiv z_2 \pmod 2.
        \end{align*}
		In both cases, the multiplicity is $1$.
	\end{prop}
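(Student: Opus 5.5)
By Theorem \ref{thm:operational_branching}, the representation $\varrho(z_1,\dots,z_n)\otimes\varrho(z_{n+1})$ is $K'$-spherical if and only if the restriction of $\varrho_{\Sp(n)}(z_1,\dots,z_n)$ to $L=\Sp(1)\times\Sp(n-1)$ contains the component $\varrho_{\Sp(1)}(z_{n+1})^*\otimes 1_{\Sp(n-1)}$. Here the fiber group $H=\Sp(1)$ is the upper-left block and $K=\Sp(n-1)$ is the lower-right block. Since $\Sp(1)\cong\SU(2)$ representations are self-dual, the target is simply $\varrho_{\Sp(1)}(z_{n+1})\otimes 1$. The multiplicity we need is
\[
\mult\big(\varrho_{\Sp(1)}(z_{n+1})\otimes \varrho_{\Sp(n-1)}(0,\dots,0),\ \varrho_{\Sp(n)}(z_1,\dots,z_n)\myrestriction_{\Sp(1)\times\Sp(n-1)}\big).
\]

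To compute it, I would use the classical branching rule for $\Sp(n)\downarrow\Sp(1)\times\Sp(n-1)$ in the form given in \cite{Chami_Spn}, or equivalently the Zhelobenko/Lepowsky rule. One passes through an intermediate pattern $(y_1,\dots,y_n)$ doubly interlacing with $(z_1,\dots,z_n)$ and with the $\Sp(n-1)$-weight $(k_2,\dots,k_n)$, namely
\[
z_1\ge y_1\ge z_2\ge\dots\ge y_{n-1}\ge z_n\ge y_n\ge 0,\qquad y_1\ge k_2\ge y_2\ge\dots\ge k_n\ge y_n.
\]
In this rule the $\Sp(1)$-content is governed by a tensor product of $\Sp(1)$-representations whose highest weights are the differences $z_i-y_i$, $y_i-k_{i+1}$, etc.

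Imposing $k_2=\dots=k_n=0$ forces $y_2=\dots=y_n=0$, and interlacing then forces $z_3=\dots=z_n=0$. What remains is $z_1\ge y_1\ge z_2\ge 0$ with $k=0$, and the $\Sp(1)$-part becomes the product of $\varrho(z_1-y_1)$, $\varrho(y_1-z_2)$ and the analogous pieces from $y_1$. I expect this to collapse to the single $\Sp(1)$-representation $\varrho(z_1-z_2)$.

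As a cross-check of that collapse, I would use the realization of the $\Sp(n-1)$-fixed part geometrically: $\Sp(n)/\Sp(n-1)=S^{4n-1}$. The $\Sp(n)\times\Sp(1)$-decomposition of $L^2(S^{4n-1})$ into harmonic polynomials on $\mathbb{H}^n$ of bidegree $(p,q)$ in the complex coordinates is classical. Each such piece is $\varrho_{\Sp(n)}(p,q,0,\dots,0)\otimes\varrho_{\Sp(1)}(p-q)$, and by Frobenius reciprocity every irreducible of $\Sp(n)$ occurs there with multiplicity equal to $\dim V^{\Sp(n-1)}$. This identifies the $\Sp(n-1)$-fixed subspace of $\varrho(z_1,z_2,0,\dots,0)$ as exactly the irreducible $\Sp(1)$-module $\varrho(z_1-z_2)$, and shows the fixed subspace is zero otherwise. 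This gives both the condition $z_{n+1}=z_1-z_2$ and multiplicity one simultaneously.

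For the $\Z_2$ quotient, $\Z_2=\{\pm\mathrm{Id}\}$ sits in $K$ as the center of $\Sp(n)$. It therefore acts on $\varrho(z_1,\dots,z_n)$ by $(-1)^{\sum z_i}=(-1)^{z_1+z_2}$, so the fixed vector survives if and only if $z_1\equiv z_2\pmod 2$. Equivalently $z_{n+1}$ is even, consistent with the $\SO(3)$-fiber.

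The main obstacle is establishing cleanly that the $\Sp(n-1)$-fixed space is irreducible of highest weight $z_1-z_2$ rather than merely containing it. I would settle this via the harmonic-polynomial decomposition, which is multiplicity-free, instead of grinding through the Lepowsky branching multiplicities.
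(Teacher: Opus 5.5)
Your proof is correct, but it reaches the key step by a different route than the paper. The first reduction is the same in both: find $1_K\otimes\varrho_{\Sp(1)}(z_{n+1})$ in the restriction of $\varrho_{\Sp(n)}(z_1,\dots,z_n)$ to $\Sp(n-1)\times\Sp(1)$.

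\textbf{How the paper proceeds.} It then quotes Knapp's branching theorem \cite[Thm.~4.1]{Knapp01}. Part (a) gives $V^{\Sp(n-1)}\neq 0$ iff $z_3=\dots=z_n=0$. Part (b) identifies the residual $\Sp(1)$-module on $V^{\Sp(n-1)}$ with the restriction to $\SU(2)$ of the $\U(2)$-module of highest weight $(z_1,z_2)$, i.e.\ with $\varrho(z_1-z_2)$. For the $\Z_2$-quotient, the paper lets the central $-1$ of the fiber $\Sp(1)$ act on $\varrho(z_{n+1})$ by $(-1)^{z_{n+1}}$.

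\textbf{How you proceed.} You pass to the model $\Sp(n)/\Sp(n-1)=S^{4n-1}$. You read off the $\Sp(1)$-module $V^{\Sp(n-1)}$ from the $\Sp(n)\times\Sp(1)$-decomposition of $L^2(S^{4n-1})$, using the equivariant Frobenius reciprocity $L^2(G/K)\cong\bigoplus_\pi V_\pi^*\otimes V_\pi^K$. This is legitimate because right translation by the centralizing upper-left $\Sp(1)$ corresponds to right multiplication by unit quaternions on $S^{4n-1}\subset\mathbb{H}^n$. For the $\Z_2$-quotient you use $\Sp(n-1)\times\Z_2=\Sp(n-1)\cdot\{\pm\mathrm{Id}\}$, so the extra element acts by the scalar $(-1)^{z_1+z_2}$. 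This agrees with the paper's parity of $z_{n+1}$ since $z_{n+1}=z_1-z_2$, and it does not need the $\Sp(1)$-structure first.

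\textbf{What each route buys.} Yours makes the geometry visible: eigenfunctions are quaternionic spherical harmonics, and the parity condition is evenness under the antipodal map. It also connects directly to the Berger-sphere spectrum the paper later uses as a consistency check. The price is that, by Frobenius reciprocity, the harmonic decomposition you cite is equivalent to the proposition itself, so you trade one classical citation for another. Your argument is also tied to spheres rather than to the paper's general branching method. Your Lepowsky/Zhelobenko paragraph only yields $\dim V^{\Sp(n-1)}=z_1-z_2+1$, not the $\Sp(1)$-structure, so you can drop it.

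\textbf{A correction to the classical fact.} As written, the bidegree-$(p,q)$ harmonics $\mathcal{H}^{p,q}$ are \emph{not} the $\Sp(n)\times\Sp(1)$-summands.
\begin{itemize}
\item Right multiplication by $j$ is complex antilinear and maps $\mathcal{H}^{p,q}$ onto $\mathcal{H}^{q,p}$, so $\mathcal{H}^{p,q}$ is not $\Sp(1)$-stable.
\item $\mathcal{H}^{p,q}$ is in general reducible under $\Sp(n)$; for example $\mathcal{H}^{1,1}\cong\mathfrak{sl}(2n,\C)\cong\varrho_{\Sp(n)}(2)\oplus\varrho_{\Sp(n)}(1,1)$.
\end{itemize}
The correct statement is that the harmonics of total degree $N$ decompose as $\bigoplus_{p+q=N,\;p\ge q\ge 0}\varrho_{\Sp(n)}(p,q)\otimes\varrho_{\Sp(1)}(p-q)$. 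Here $\mathcal{H}^{p,q}$ is only a weight space for the right $\U(1)\subset\Sp(1)$. This multiplicity-free decomposition is all your Frobenius step uses, so your conclusion, including multiplicity one, is unaffected once you state the fact this way.
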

	\begin{proof}
By Theorem \ref{thm:operational_branching}, the representation $\varrho_{\Sp(n)}(z_{1},...,z_{n})\otimes \varrho_{\Sp(1)}(z_{n+1})$ is spherical if and only if the restriction of $\varrho_{\Sp(n)}(z_{1},...,z_{n})$ to the base isotropy group $K \cdot H = \Sp(n-1) \times \Sp(1)$ contains the component $1_K \otimes \varrho_{\Sp(1)}(z_{n+1})^*$. Because irreducible representations of $\Sp(1) \cong \SU(2)$ are self-dual, and the $K$-part must be trivial, this is precisely equivalent to requiring that the component 
 $$ \varrho_{\Sp(n-1)}(0) \otimes \varrho_{\Sp(1)}(z_{n+1}) $$
 occurs in the restriction.    
        The branching $\Sp(n)\downarrow \Sp(n-1)\times\Sp(1)$ is classical (see for example
        \cite{Knapp01}, \cite{WallachYacobi09}, \cite{Chami_Spn}). Following \cite[Thm. 4.1 (a)]{Knapp01}, a necessary and sufficient condition for the subspace of $\Sp(n-1)$-invariants in the irreducible representation $\varrho(z_1, \dots, z_n)$ to be non-zero is that the highest weight components vanish for indices strictly greater than $2$. This yields the condition
\begin{align*}
    z_3 = \dots = z_n = 0.
\end{align*}
Furthermore, \cite[Thm. 4.1 (b)]{Knapp01} states that the resulting representation on the remaining factor $\Sp(1)$ is equivalent to the restriction of the irreducible representation of $U(2)$ with highest weight $(z_1, z_2)$ to the subgroup $\Sp(1)\cong \SU(2) \subset U(2)$.
The restriction of an irreducible representation of $U(2)$ with highest weight $(z_1, z_2)$ to $\SU(2)$ is the irreducible representation with highest weight $z_1 - z_2$.
Consequently, the representation $\varrho_{\Sp(n-1)}(0)\otimes\varrho_{\Sp(1)}(z_{n+1})  $ occurs in $\varrho_{\Sp(n)}(z_1, \dots, z_n)$ if and only if
\begin{align*}
    z_{n+1} = z_1 - z_2.
    \end{align*}
    The multiplicity is $1$.   

    For the quotient space $\Sp(n)/(\Sp(n-1)\times \Z_{2})$, the representation must additionally act trivially on the central $\Z_2 \subset \Sp(1)$ of the fiber. This holds if and only if the highest weight $z_{n+1}$ of the $\Sp(1)$-representation is even (which corresponds to representations factoring through $\SO(3))$.
	\end{proof}
	\begin{rem}
		This result is consistent with \cite[Thm. 8.1.5]{Goodman Wallach} from which we can directly conclude that $\varrho(z_{1},...,z_{n})$ is $\Sp(n-1)$-spherical if and only if $z_{3}=...=z_{n}=0$. Additionally, \cite[Thm. 8.1.5]{Goodman Wallach} states that the corresponding multiplicity is $z_1-z_2+1$ which is consistent with the $\Sp(1)$ representation 
$\varrho_{\Sp(1)}(z_1-z_2)$ stated in Proposition \ref{prop: cl spherical reps}.
\end{rem}

	\begin{thm}\label{thm: spectrum 3 sasaki type C}
    The spectrum of the Laplace-Beltrami operator on the $3$-$(\alpha,\delta)$-Sasaki manifolds $S^{4n-1} \cong \Sp(n)/\Sp(n-1)$ and $\mathbb{R}P^{4n-1} \cong \Sp(n)/(\Sp(n-1)\times \Z_2)$ for $n\geq 2$ equipped with $g_{t_{0},t_{1}}$ is given by the collection of numbers
		\begin{align*}
			\eta(z_{1},z_{2})=2\alpha\delta(2z_{1}(n-1)+2z_{1}z_{2}+2nz_{2})+\delta^{2}((z_{1}-z_{2})(z_{1}-z_{2}+2)),
		\end{align*}
		where the parameters $z_{1}\geq z_{2}\geq 0$ are integers. For the projective space $\mathbb{R}P^{4n-1}$, the parameters must additionally satisfy the parity condition 
        \[
            z_1 \equiv z_2 \pmod 2.
        \]
        The multiplicity of the eigenvalue $\eta(z_1,z_2)$ in both cases can be expressed as 
        \begin{align*}
        \mult (\eta(z_{1},z_{2})) 
        = \frac{(z_1-z_2+1)^2(z_1+z_2 + 2n-1)}{(2n-1)(2n-2)} \binom{z_1+2n-2}{2n-3}\binom{z_2+2n-3}{2n-3}.
    \end{align*}       
	\end{thm}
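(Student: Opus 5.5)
The argument follows the template of Theorems \ref{thm: spec Al 3 alpha delta sasaki} and \ref{thm: spectrum type B,D}. The set of admissible pairs is already fixed by Proposition \ref{prop: cl spherical reps}: $z_3=\dots=z_n=0$ and $z_{n+1}=z_1-z_2$, together with $z_1\equiv z_2 \pmod 2$ for $\mathbb{R}P^{4n-1}$. Since the branching multiplicity is $1$, Theorem \ref{thm:operational_branching} says each such pair contributes an eigenvalue given by \eqref{eq:sasaki_spectrum_formula}, with multiplicity $\dim\varrho_{\Sp(n)}(z_1,z_2,0,\dots,0)\cdot(z_1-z_2+1)$. What remains is to compute the two Casimir constants in the normalization of the 3-$(\alpha,\delta)$-Sasaki metric, and to simplify the dimension.

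For the Casimir constants, I will use $\langle A,B\rangle=-\tr(AB)$ on $\sp(n)\subset\u(2n)$. Then $\langle\lambda_i,\lambda_j\rangle=\tfrac12\delta_{ij}$, since each $\lambda_i$ appears in two diagonal entries. The half-sum of positive roots is $\rho=\sum_k(n-k+1)\lambda_k$. Freudenthal's formula then gives
\[
\langle\Lambda+2\rho,\Lambda\rangle=\tfrac12\big(z_1(z_1+2n)+z_2(z_2+2n-2)\big).
\]
The fiber $\sp(1)$ corresponds to the highest root $2\lambda_1$, which is dominant, so the $\Sp(1)$-weight $z_{n+1}$ corresponds to the weight $z_{n+1}\lambda_1$. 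This gives
\[
\langle (z_{n+1}+2)\lambda_1,z_{n+1}\lambda_1\rangle=\tfrac12 z_{n+1}(z_{n+1}+2).
\]
Next I fix the scale. Following \cite{GoertschesRoschigStecker23}, the 3-Sasaki metric on the dual is $-4(d+2)B$. Here $d=n-1$, and the Killing form of $\sp(n)$ is $2(n+1)\tr$, so $B=-\tfrac{1}{2(n+1)}\langle\cdot,\cdot\rangle$ on the dual. Hence $g=2\langle\cdot,\cdot\rangle$ at $2\alpha\delta=\delta^2=1$, and both constants above get doubled.

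Substituting into \eqref{eq:sasaki_spectrum_formula} gives
\[
\eta=2\alpha\delta\big(z_1^2+2nz_1+z_2^2+(2n-2)z_2-(z_1-z_2)(z_1-z_2+2)\big)+\delta^2 (z_1-z_2)(z_1-z_2+2).
\]
Expanding, the bracket collapses to $2z_1z_2+2(n-1)z_1+2nz_2$, which is the claimed form. This cancellation is also the check that the scale was chosen correctly.

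For the multiplicity, I will write down the Weyl dimension formula for $C_n$ with $l_i=z_i+n-i+1$ and $m_i=n-i+1$:
\[
\prod_i \frac{l_i}{m_i}\prod_{i<j}\frac{l_i^2-l_j^2}{m_i^2-m_j^2}.
\]
Only the factors involving $i\in\{1,2\}$ survive. For $j\ge3$ one has $l_j=m_j$, and the $(l_1^2-m_j^2)$-products telescope into products of consecutive integers such as $(z_1+n+j-1)(z_1+n-j+1)$. I will group these into the two binomial coefficients $\binom{z_1+2n-2}{2n-3}$ and $\binom{z_2+2n-3}{2n-3}$, along with the factor $(z_1-z_2+1)(z_1+z_2+2n-1)$ coming from $l_1^2-l_2^2$. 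The main obstacle is this bookkeeping of factorial constants: the denominators $\prod(m_1^2-m_j^2)(m_2^2-m_j^2)$ and $m_1m_2$ must combine to $(2n-1)(2n-2)$. I will check the final constant against $n=2$, where $\dim\varrho_{\Sp(2)}(z_1,z_2)=\tfrac16(z_1-z_2+1)(z_1+z_2+3)(z_1+2)(z_2+1)$ must be matched. Multiplying by $z_1-z_2+1$ from the fiber produces the squared factor in the statement.
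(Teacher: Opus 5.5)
Your proposal is correct and follows essentially the same route as the paper: admissible pairs from Proposition \ref{prop: cl spherical reps}, Freudenthal's formula with $\langle\lambda_i,\lambda_j\rangle=\tfrac12\delta_{ij}$ and $\rho=\sum_k(n+1-k)\lambda_k$, the normalization $g=2\langle\cdot,\cdot\rangle$ coming from $-4(d+2)B$ with $d=n-1$, and the Weyl dimension of $\varrho_{\Sp(n)}(z_1,z_2)$ multiplied by $z_1-z_2+1$; you also spell out the binomial simplification that the paper only cites from Fulton--Harris. One small slip in that bookkeeping: since $m_j=n-j+1$, the factor is $l_1^2-m_j^2=(z_1+j-1)(z_1+2n-j+1)$, not $(z_1+n+j-1)(z_1+n-j+1)$, and your $n=2$ check cannot catch this kind of error because the products over $j\ge 3$ are empty there.
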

	\begin{proof}
		Note that with respect to the Euclidean inner product $\langle A,B\rangle=-\tr(A\cdot B)$ the metrical dual of the projection $\lambda_{i}$ to the $i$-th diagonal entry is given by $\frac{1}{2}E_{i}$, where $E_{i}$ has the value $1$ on the $i$-th and $-1$ on the $i+n$-th diagonal entry. Hence, we have that
		\begin{align*}
			\langle \lambda_{i},\lambda_{j}\rangle=\frac{\delta_{ij}}{2}.
		\end{align*} 
		The half of sum of positive roots is given by
		\begin{align*}
			\rho&=\frac{1}{2}\left(\sum_{i=1}^{n}2\lambda_{i}+\sum_{1\leq i<j\leq n}\lambda_{i}+ \lambda_{j}+\sum_{1\leq i<j\leq n}\lambda_{i}- \lambda_{j}\right).
		\end{align*}
		We fix a $1\leq k\leq n$ and count how often $\lambda_{k}$ occurs here: In the first summand we have it $2$ times, in the second it occurs $(n-1)$ times and in the third it occurs $(n-k)-(k-1)$ times (we get a plus for $k=i$ and a minus for $j=k)$. In total we get
		\begin{align*}
			\rho=\sum_{k=1}^{n}(n+1-k)\lambda_{k}.	
		\end{align*}
		We compute the Freudenthal formula for $\varrho_{\Sp(n)}(z_{1},z_{2})$:
		\begin{align*}
			\langle z_{1}\lambda_{1}+z_{2}\lambda_{2}+2\sum_{k=1}^{n}(n+1-k)\lambda_{k},z_{1}\lambda_{1}+z_{2}\lambda_{2}\rangle=\frac{z_{1}^{2}+z_{2}^{2}+2z_{1}n+2(n-1)z_{2}}{2}.
		\end{align*}
		Analogously, we compute the Freudenthal formula for the $\su(2)\subset \sp(n)$ whose root is $2\lambda_{1}$, as this is the unique maximal root in the fundamental Weyl-chamber, see \cite{GoertschesRoschigStecker23}. 
		\begin{align*}
			\langle \lambda_{1}z_{n+1} +2\lambda_{1},\lambda_{1}z_{n+1}\rangle=\frac{z_{n+1}^{2}+2z_{n+1}}{2}.
		\end{align*}
		Recall from \cite{GoertschesRoschigStecker23} that the $3$-Sasaki metric on the orthogonal complement of $\su(2)^{*}$ in $\sp(n)^{*}$ is given by $g=-4(d+2)B$, where $d=(\dim(G/K)-3)/4$ and $B=-\frac{1}{2(n+1)}\langle \cdot,\cdot\rangle$ is the form on the dual space induced by the Killing form of $\sp(n)$. Let us translate $d$ to $n$:
		\begin{align*}
			d=\frac{\dim(G/K)-3}{4}=\frac{(4n-1)-3}{4}=\frac{4n-4}{4}=n-1.
		\end{align*}
		In total we have that
		\begin{align*}
			g=-4(d+2)B=\frac{4(n+1)}{2(n+1)}\langle\cdot,\cdot\rangle=2 \langle\cdot,\cdot\rangle
		\end{align*}
		and observe that $\varrho_{\Sp(n)}(z_{1},z_{2})\otimes \varrho(z_{1}-z_{2})$ produces with respect to the $3$-$(\alpha,\delta)$-Sasaki metric the eigenvalue
		\begin{align*}
			&2\alpha\cdot\delta\left((z_{1}^{2}+z_{2}^{2}+2z_{1}n+2(n-1)z_{2})-((z_{1}-z_{2})^{2}+2(z_{1}-z_{2}))\right)
            \\ &+\delta^{2} ((z_{1}-z_{2})^{2}+2(z_{1}-z_{2})).
		\end{align*}
		This can be simplified to
		\begin{align*}
			2\alpha\delta(2z_{1}(n-1)+2z_{1}z_{2}+2nz_{2})+\delta^{2}((z_{1}-z_{2})(z_{1}-z_{2}+2)).
		\end{align*}	
        Finally, we compute the multiplicity of each eigenvalue. Since the branching is multiplicity-free, the total multiplicity is the product of the dimensions of the corresponding irreducible representations of $\Sp(n)$ and $\Sp(1)$. The dimension of the $\Sp(1) \cong \SU(2)$ representation $\varrho(z_1-z_2)$ is simply $z_1-z_2+1$.
The dimension of the irreducible $\Sp(n)$-representation $\varrho_{\Sp(n)}(z_1, z_2)$ with highest weight $\lambda = z_1\lambda_1 + z_2\lambda_2$ can computed using \cite[Eq. (24.19)]{Fulton_Harris}.
	\end{proof}
\begin{rem}[Comparison to \cite{2BLP22}]\label{rem: comparison to bettiol}
	Our results for the type C family can be compared to the spectrum of the isotropic quaternionic Hopf fibration, denoted by the metric $h(t,t,t)$ in \cite{2BLP22}. It is crucial to note that the parameter $n$ is used differently in the two contexts. In our work, for the space $\Sp(n)/\Sp(n-1)$, $n$ denotes the rank of the group $G=\Sp(n)$, and the manifold has dimension $4n-1$. In \cite{2BLP22}, the space is denoted $S^{4n+3} = \Sp(n+1)/\Sp(n)$, therefore, we denote their parameter $n$ with the auxiliary parameter $\tilde{n}$, which corresponds to our $n-1$.
	
	The eigenvalues in \cite[Thm. 3.8]{2BLP22} are parameterized by integers $k,l$ satisfying $k \ge l \ge 0$ and $k \equiv l \pmod 2$. Their formula, written with the parameter $\tilde{n}$, is:
	\begin{align*}
		\eta_{\text{BLP22b}}(k,l) = k(k + 4\tilde{n} + 2) + l(l + 2) \left( \frac{1}{t^2} - 1 \right).
	\end{align*}
	To match our setting, we substitute $\tilde{n} = n-1$:
	\begin{align*}
		\eta_{\text{BLP22b}}(k,l) &= k(k + 4(n-1) + 2) + l(l + 2) \left( \frac{1}{t^2} - 1 \right) \\
		&= k(k + 4n - 2) + l(l + 2) \left( \frac{1}{t^2} - 1 \right) \\
		&= (k^2 + 4nk - 2k - l^2 - 2l) + \frac{1}{t^2} l(l+2).
	\end{align*}
	
	Now, we take our eigenvalue formula for the type C family,
	\begin{align*}
		\eta(z_1,z_2) = 2\alpha\delta(2z_{1}(n-1)+2z_{1}z_{2}+2nz_{2})+\delta^{2}((z_{1}-z_{2})(z_{1}-z_{2}+2)),
	\end{align*}
	and specialize it to the corresponding one-parameter family. This requires setting $2\alpha\delta = 2$ and identifying $\delta^2 = 1/t^2$ to align the metric normalizations. We then express our formula in terms of the parameters $k=z_1+z_2$ and $l=z_1-z_2$. The term with $\delta^2$ becomes $\frac{1}{t^2}l(l+2)$. The constant term becomes:
	\begin{align*}
		& 2 \left( 2z_{1}(n-1)+2z_{1}z_{2}+2nz_{2} \right) \\
		&= 2 \left( (z_1+z_2)(n-1) + z_1(z_2 - (n-1)) + z_2(n+z_1) \right) \\ 
		&= k^2 + 4nk - 2k - l^2 - 2l.
	\end{align*}
	Thus, our formula under this specialization is:
	\begin{align*}
		\eta(k,l) = (k^2 + 4nk - 2k - l^2 - 2l) + \frac{1}{t^2} l(l+2).
	\end{align*}
	The two expressions are identical. This confirms that our general formula for the 3-$(\alpha$,$\delta)$-Sasaki spectrum correctly reproduces the known spectrum of the quaternionic Berger spheres.
	
	Moreover, the multiplicities match exactly. In \cite[Thm. 3.8]{1BLP22}, the total multiplicity for the pair $(k,l)$ is given by $(l+1)d_{p,q}$, where $p=(k+l)/2$ and $q=(k-l)/2$. Under our identification, $p=z_1$ and $q=z_2$. Substituting $\tilde{n}=n-1$ into their formula \cite[Eq. (3.12)]{1BLP22} yields:
	\begin{align*}
		(l+1)d_{z_1,z_2} = (z_1 - z_2 + 1) \frac{(z_1+z_2+2n-1)(z_1-z_2+1)}{(2n-1)(z_1+1)} \binom{z_1+2n-2}{z_1} \binom{z_2+2n-3}{z_2}.
	\end{align*}
	By expanding the binomial coefficients and absorbing the $(z_1+1)$ factor into the denominator's factorial, this expression algebraically rearranges exactly to our simplified multiplicity formula:
	\begin{align*}
		\mult(\eta(z_1,z_2)) = \frac{(z_1-z_2+1)^2(z_1+z_2+2n-1)}{(2n-1)(2n-2)} \binom{z_1+2n-2}{2n-3} \binom{z_2+2n-3}{2n-3}.
	\end{align*}
\end{rem}
	
\section{Stiefel Manifolds}\label{sec:Stiefel Manifolds}
The methods developed in Section \ref{sec:the_tools} allow for the spectral analysis of invariant metrics on a broader class of homogeneous spaces. In this section, we focus on the Stiefel manifolds $V_2(\mathbb{F}^{n})$ of orthonormal $2$-frames in $\mathbb{F}^{n}$, where $\mathbb{F}$ denotes the field of real $(\mathbb{R})$ or complex $(\mathbb{C})$ numbers.

These manifolds naturally fiber over the Grassmannians $Gr_2(\mathbb{F}^{n})$ of $2$-planes. Let $G$ denote the standard compact Lie group $(\SO(n)$ for $\mathbb{F}=\mathbb{R}$ or $\SU(n)$ for $\mathbb{F}=\mathbb{C})$ acting transitively on $V_2(\mathbb{F}^{n})$. We fix a bi-invariant metric $Q$ on the Lie algebra $\mathfrak{g}$ of $G$, defined as the negative of the trace form:
\begin{align*}
    Q(X,Y) = -\tr(XY).
\end{align*}
The tangent space of $V_2(\mathbb{F}^{n}) \cong G/K$ is naturally identified with the orthogonal complement $\mathfrak{k}^\perp \subset \mathfrak{g}$. This space admits an $\Ad(K)$-invariant decomposition $\mathfrak{k}^\perp = \mathfrak{m} \oplus \mathfrak{h}$ with respect to $Q$, where $\mathfrak{m}$ is the horizontal subspace tangent to the Grassmannian and $\mathfrak{h}$ is the vertical subspace tangent to the fiber. We consider the family of $G$-invariant metrics $g_{t_0, t_1}$, the canonical variation, defined by uniformly scaling the vertical space:
 \begin{align}\label{eq:canonical_variation}
     g_{t_0, t_1} = t_0 \cdot Q\myrestriction_{\mathfrak{m}} + t_1 \cdot Q\myrestriction_{\mathfrak{h}}, \quad t_0, t_1 > 0.
 \end{align}
When $t_0 = t_1$, this recovers the normal homogeneous metric.

The geometry of these deformed metrics was studied extensively by Jensen \cite{Jensen73, Jensen75}. In particular, in \cite{Jensen75}, Jensen provided a geometric realization of these metrics by constructing equivariant embeddings of the Stiefel manifolds into higher-dimensional Grassmannians, which coincide with the canonical variation   \eqref{eq:canonical_variation} for the specific parameter range $t_1/t_0 \in (0, 1]$.

While the geometry of these metrics is well-understood, their explicit spectra are mostly unknown. For the standard metric $(t_0=t_1)$, the $\SO(n)$-equivariant decomposition of $L^2(V_k(\mathbb{R}^n))$ is classical. Gelbart \cite{Gelbart74} gives an explicit realization of the relevant $\SO(n)$-components by constructing "Stiefel harmonics" using polynomial restrictions. Very recently, Birtea et al. \cite{BirteaCasuComanescu25} derived explicit coordinate formulas for the Laplace-Beltrami operator on $V_k(\mathbb{R}^n)$ viewed as a constraint submanifold. However, these analytical and coordinate-based approaches do not easily extend to solving the eigenvalue problem for canonical variations.
 
From a representation-theoretic perspective, Tsukamoto \cite{Tsukamoto81} successfully computed the spectra of the Grassmannians $\SO(n+2)/(\SO(2)\times \SO(n))$ and $\Sp(n+1)/(\Sp(1)\times \Sp(n))$. Geometrically, these symmetric spaces appear precisely as the \emph{base manifolds} of our fibrations. Our work extends Tsukamoto's results by computing the spectrum of the \emph{total space} $V_2(\mathbb{F}^{n})$, which requires handling the additional representations introduced by the fiber.

\subsection{Complex Stiefel manifolds $V_2(\mathbb{C}^n)$}

The complex Stiefel manifold $V_2(\mathbb{C}^{n+1}) \cong \SU(n+1)/\SU(n-1)$ fibers over the complex Grassmannian $Gr_2(\mathbb{C}^{n+1})$ with fiber $H \cong \U(2)$. Topologically, these are Spin manifolds exactly when $n$ is even.

While the standard canonical variation \eqref{eq:canonical_variation} scales the vertical space $\mathfrak{h}$ uniformly, the algebraic structure of the complex fiber allows for finer deformations. The vertical algebra decomposes as $\mathfrak{h} \cong \mathfrak{u}(2) = \mathfrak{su}(2) \oplus \mathfrak{u}(1)$. As discussed by Arvanitoyeorgos, Sakane, and Statha \cite{ArvanitoyeorgosSakaneStatha20}, one may scale the center and the semisimple part of the fiber independently, leading to a 3-parameter family of metrics known as metrics of \textit{Jensen type}:
 \begin{align}\label{eq:complex_stiefel_metric_3param}
     g_{t_0, t_1, t_2} = t_0 \cdot Q\myrestriction_{\mathfrak{m}} + t_1 \cdot Q\myrestriction_{\mathfrak{su}(2)} + t_2 \cdot Q\myrestriction_{\mathfrak{u}(1)}.
 \end{align}

This parameterization constitutes a \emph{generalized} canonical variation. Following the construction in Section \ref{sec:spectral_theory}, this metric can be realized naturally reductively by enlarging the transitive group to $G' = \SU(n+1) \times \SU(2) \times \U(1)$. This geometric setup allows us to apply Theorem \ref{thm:operational_branching}, expressing the Laplace-Beltrami operator via the generalized Casimir operators of the individual factors.

\begin{rem}
    The existence of homogeneous Einstein metrics on $V_2(\mathbb{C}^{n+1})$ has been studied in \cite{ArvanitoyeorgosSakaneStatha20}. They investigated the full space of invariant metrics and found Einstein metrics within the family \eqref{eq:complex_stiefel_metric_3param} as well as non-Jensen type Einstein metrics for certain dimensions. The canonical variation defined in \eqref{eq:canonical_variation} does not seem to contain Einstein metrics (cf. \cite{Jensen75}, \cite{ArvanitoyeorgosSakaneStatha20}).
\end{rem}
\begin{prop}[{\cite[p.205]{ArvanitoyeorgosSakaneStatha20}}]\label{prop: Einstein complex Stiefel}
For any $n \ge 2$, the metric $g_{t_0, t_1, t_2}$ is Einstein if the parameters satisfy
\begin{align*}
     t_1 = t_0 \left[ 1 \pm \sqrt{\frac{n^2-2n}{n^2-1}} \right] ,\quad
 t_2 = t_0 \left[ \frac{n-1}{(n+1)(2n-1)} \left( 4n+1 \mp 3 \sqrt{\frac{n^2-2n}{n^2-1}} \right) \right]. 
\end{align*}
For any such choice of parameters, the corresponding Einstein constant is given by $$\Lambda=\frac{(n+1)t_2}{2 t_0^2}=\frac{n^2-1}{2 t_0 (n+1)(2n-1)} \left[ 4n+1 \mp 3 \sqrt{\frac{n^2-2n}{n^2-1}} \right].$$
\end{prop}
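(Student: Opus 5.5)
Since $m$ is reducible under $K=\SU(n-1)$, the plan is to compute the Ricci tensor of $g_{t_0,t_1,t_2}$ on three modules that are irreducible and pairwise inequivalent under the larger isotropy group, and then solve the two resulting Einstein equations. The first step is to justify that the Ricci tensor is diagonal with respect to
\[
\k^\perp=\mathfrak{m}\oplus\su(2)\oplus\u(1),\qquad \dim\mathfrak m=4(n-1).
\]
\begin{itemize}
\item I would use Theorem \ref{thm:naturally_reductive_realization}: $g_{t_0,t_1,t_2}$ is invariant under $G'=\SU(n+1)\times\SU(2)\times\U(1)$, whose isotropy $K'$ acts on $\k^\perp$ through $L=S(\U(n-1)\times\U(2))$.
\item Under $L$, the module $\mathfrak m\cong \mathbb C^{n-1}\otimes(\mathbb C^2)^*$ (realified) is irreducible.
\item The modules $\su(2)$ and $\u(1)$ are irreducible and pairwise inequivalent (adjoint of $\SU(2)$ versus trivial).
\item Hence $\Ric=r_0\,t_0Q\oplus r_1\,t_1Q\oplus r_2\,t_2Q$ for three functions $r_0,r_1,r_2$ of $(t_0,t_1,t_2)$, and the Einstein condition reduces to $r_0=r_1=r_2$.
\end{itemize}

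Next I would compute $r_0,r_1,r_2$ with the Wang–Ziller / Park–Sakane formula. With $B$ the negative Killing form, $g=\sum x_kB|_{\mathfrak m_k}$ and $d_k=\dim\mathfrak m_k$, it reads
\[
r_k=\frac{1}{2x_k}+\frac{1}{4d_k}\sum_{i,j}\begin{bmatrix}k\\ ij\end{bmatrix}\frac{x_k}{x_ix_j}-\frac{1}{2d_k}\sum_{i,j}\begin{bmatrix}j\\ ki\end{bmatrix}\frac{x_j}{x_kx_i}.
\]
\begin{itemize}
\item Since $B=2(n+1)Q$ on $\su(n+1)$, the scalings are $x_k=t_k/(2(n+1))$.
\item The only nonzero bracket constants are $[\mathfrak m\mathfrak m\,\su(2)]$, $[\mathfrak m\mathfrak m\,\u(1)]$ and $[\su(2)\su(2)\su(2)]$. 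Indeed $[\mathfrak m,\mathfrak m]\subset\l$ (symmetric pair), $\u(1)$ is central in $\l$, and $\l$ preserves $\mathfrak m$, so $[\l,\l]$ has no $\mathfrak m$-component.
\item I would evaluate these constants via Casimir identities. The $\su(2)$-$\su(2)$ constant comes from the Killing form of $\su(2)$ restricted from $\su(n+1)$. The constants involving $\mathfrak m$ come from $\sum_{i,j}[\,k\,ij]=d_k$ together with the Casimir eigenvalues of $\su(2)$ and $\u(1)$ acting on $\mathfrak m$.
\end{itemize}

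Then I would normalize $t_0=1$ and solve the system.
\begin{itemize}
\item Since $\u(1)$ is abelian and appears only in $[\mathfrak m\mathfrak m\,\u(1)]$, $r_2$ is a single term proportional to $t_2/t_0^2$. This should give $\Lambda=r_2=(n+1)t_2/(2t_0^2)$ directly, which is the stated Einstein-constant formula.
\item $r_0-r_2=0$ is linear in $t_2$ once $t_1$ is fixed, so I would use it to eliminate $t_2$.
\item Substituting into $r_1=r_2$ should leave a quadratic in $t_1/t_0$. Its roots should be $1\pm\sqrt{(n^2-2n)/(n^2-1)}$, and back-substitution should give the stated $t_2$.
\end{itemize}

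The main obstacle is getting the bracket constants right: the $\u(1)$ normalization inside $\u(2)\subset\su(n+1)$ must be computed with respect to $Q=-\tr$, which makes the generator $\tfrac{i}{\sqrt{?}}\mathrm{diag}(2,\dots,2,-(n-1),-(n-1))$-type with a specific norm. A slip there changes the coefficients in the formulas for $t_2$ and $\Lambda$. To control this, I would first check the final formulas against the case $n=2$. There $\sqrt{n^2-2n}=0$, so the two solutions coincide at $t_1=t_0$.
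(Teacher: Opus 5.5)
Your proposal is correct, but it takes a different route from the paper. The paper does no Ricci computation of its own. It specializes the Arvanitoyeorgos--Sakane--Statha analysis of $V_{2m}(\mathbb C^{2m+\tilde n})$ to $m=1$, $\tilde n=n-1$, and builds a dictionary between their modules and parameters and the present ones: $\mathfrak n_7\oplus\mathfrak n_8=\mathfrak m$, $\mathfrak h_5\oplus\mathfrak n_6=\su(2)$, $\mathfrak h_4=\u(1)$, $a=d=1$, $b=c=0$, $v_5=x_{(6)}$, with $B=2(n+1)Q$. It then reads $t_1/t_0$ and $t_2/t_0$ off their explicit solutions and obtains $\Lambda$ from their component $r_4=v_4/4$.

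You instead use the enlarged isotropy $L=S(\U(n-1)\times\U(2))$ to reduce to three pairwise inequivalent irreducible modules. Diagonality of $\Ric$ is then automatic, and only three bracket constants are needed. The cited work uses a finer decomposition with more parameters, which is why the paper needs the dictionary. Your plan closes cleanly. With $x_k=t_k/(2(n+1))$ one finds
\[
[\su(2)\,\su(2)\,\su(2)]=\tfrac{6}{n+1},\qquad [\su(2)\,\mathfrak m\,\mathfrak m]=\tfrac{3(n-1)}{n+1},\qquad [\u(1)\,\mathfrak m\,\mathfrak m]=1,
\]
hence
\[
r_0=\frac{1}{2x_0}-\frac{3x_1}{8(n+1)x_0^2}-\frac{x_2}{8(n-1)x_0^2},\qquad r_1=\frac{1}{2(n+1)x_1}+\frac{(n-1)x_1}{4(n+1)x_0^2},\qquad r_2=\frac{x_2}{4x_0^2}.
\]
Put $u=t_1/t_0$. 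Then $r_0=r_2$ gives $t_2/t_0=\frac{(n-1)(4n+4-3u)}{(n+1)(2n-1)}$. After that, $r_1=r_2$ becomes $(n^2-1)u^2-2(n^2-1)u+2n-1=0$, whose roots are $1\pm\sqrt{(n^2-2n)/(n^2-1)}$. Finally $r_2=(n+1)t_2/(2t_0^2)$.

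What each route buys: your argument is self-contained and even gives ``if and only if'' within the three-parameter family. The paper's citation instead places the result inside the study of all invariant Einstein metrics on these spaces.

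Two small remarks:
\begin{itemize}
\item The identity $\sum_{i,j}[k;ij]=d_k$ requires $i,j$ to run over $\k$ as well; otherwise one gets $d_k(1-2c_k)$, with $c_k$ the $\k$-Casimir. This is harmless here, since you apply it to $\su(2)$ and $\u(1)$, which both commute with $\k=\su(n-1)$.
\item The $\u(1)$-normalization you flag as the main obstacle disappears. The same identity forces $[\u(1)\,\mathfrak m\,\mathfrak m]=1$ without any explicit generator; this is exactly the paper's $[4;77]+[4;88]=1$.
\end{itemize}
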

\begin{proof}
      In \cite[Sec. 6.4, p.\ 203]{ArvanitoyeorgosSakaneStatha20} invariant metrics on the complex Stiefel manifolds $V_{2m}(\mathbb{C}^{2m+\tilde{n}}) \cong \SU(2m+\tilde{n})/\SU(\tilde{n})$ are investigated. Setting $m=1$ (and thus $\ell=1)$ and $\tilde{n}=n-1$ precisely yields our manifold $V_2(\mathbb{C}^{n+1}) \cong \SU(n+1)/\SU(n-1)$ with dimension parameter $N = n+1$. 
    The authors utilize the negative of the Killing form $B$ on $\mathfrak{su}(n+1)$ as the bi-invariant background metric \cite[p.\ 166]{ArvanitoyeorgosSakaneStatha20}, which relates to our trace form $Q(X,Y) = -\tr(XY)$ via $B = 2(n+1)Q$.

    For their specific search of Einstein metrics on these spaces (cf.\ \cite[p.\ 203]{ArvanitoyeorgosSakaneStatha20}), they choose a parametrization of the invariant metric $g_{\text{ASS20}}$ where the relative scales of the submodules $\mathfrak{n}_7$ and $\mathfrak{n}_8$ are normalized to $x_{(7)} = x_{(8)} = 1$. In our fibration, these two submodules exactly span the horizontal space $\mathfrak{m}$. Matching this horizontal scale, our metric $g_{t_0, t_1, t_2}$ corresponds to the scaled metric
    \begin{align*}
        g_{t_0, t_1, t_2} = \frac{t_0}{2(n+1)} g_{\text{ASS20}},
    \end{align*}
    provided we identify the remaining fiber parameters appropriately. 
    
    Furthermore, to ensure a diagonal metric on the center of the isotropy algebra, they set the parameter variables $a=d=1$ and $b=c=0$ \cite[p.\ 203]{ArvanitoyeorgosSakaneStatha20}. The $\mathfrak{su}(2)$-part of the fiber is spanned by $\mathfrak{n}_6 \oplus \mathfrak{h}_5$. The authors recover metrics of \textit{Jensen's type} by enforcing that the scales of these two modules coincide, i.e., $v_5 = x_{(6)}$, which reduces their system to the algebraic condition $A(x_{(6)}) = 0$ \cite[p.\ 204]{ArvanitoyeorgosSakaneStatha20}. Thus, the relative scale of the entire $\mathfrak{su}(2)$-fiber is exactly $x_{(6)}$, yielding $t_1/t_0 = x_{(6)}$. The remaining 1-dimensional center $\mathfrak{u}(1)$ corresponds to their subspace $\mathfrak{h}_4$, whose scale is denoted by $v_4$, yielding $t_2/t_0 = v_4$.

     The explicit solutions for $x_{(6)}$ are provided in \cite[p.\ 205]{ArvanitoyeorgosSakaneStatha20}. Substituting $m=1$ and $\tilde{n}=n-1$ into these expressions yields $x_{(6)} = 1 \pm \sqrt{\frac{n^2-2n}{n^2-1}}$, which verifies our formula for $t_1/t_0$. The values for $t_2/t_0 = v_4$ follow directly from the corresponding explicit formulas provided alongside these solutions.

    Let $\lambda_{\text{ASS20}}$ be the Einstein constant of the metric $g_{\text{ASS20}}$. By \cite[Prop.\ 4.4, p.\ 173]{ArvanitoyeorgosSakaneStatha20}, the Ricci tensor component $r_4$ along the $\mathfrak{u}(1)$-center evaluates to 
    \begin{align*}
        r_4 = \frac{v_4}{4} \left( \frac{1}{x_{(6)}^2} \left[\begin{matrix} 4 \\ 6 6 \end{matrix}\right] + \frac{1}{x_{(7)}^2} \left[\begin{matrix} 4 \\ 7 7 \end{matrix}\right] + \frac{1}{x_{(8)}^2} \left[\begin{matrix} 4 \\ 8 8 \end{matrix}\right] \right),
    \end{align*}
    using their specific structure constants. Given $a=1$, $b=0$, and $\ell=m=1$, applying \cite[Lem.\ 4.3]{ArvanitoyeorgosSakaneStatha20} yields $\left[\begin{smallmatrix} 4 \\ 6 6 \end{smallmatrix}\right] = 0$ and $\left[\begin{smallmatrix} 4 \\ 7 7 \end{smallmatrix}\right] = \left[\begin{smallmatrix} 4 \\ 8 8 \end{smallmatrix}\right] = 1/2$. Since $x_{(7)} = x_{(8)} = 1$, the sum inside the parenthesis simplifies to exactly $1$, yielding the Einstein condition $\lambda_{\text{ASS20}} = r_4 = v_4/4$.

    Since the Ricci tensor is invariant under global scaling of the metric, $\Ric(g_{t_0, t_1, t_2}) = \Ric(g_{\text{ASS20}})$. Thus, the true Einstein constant $\Lambda$ associated with our metric $g_{t_0, t_1, t_2}$ scales inversely to the metric itself:
    \begin{align*}
        \Lambda = \Lambda_{\text{ASS20}} \frac{2(n+1)}{t_0} = \frac{v_4}{4} \frac{2(n+1)}{t_0} = \frac{t_2 / t_0}{4} \frac{2(n+1)}{t_0} = \frac{(n+1)t_2}{2 t_0^2}.
    \end{align*}
    Substituting the explicit formula for $t_2$ completes the proof.
\end{proof}
We parameterize the irreducible representations of the fiber group $H \cong \U(2)$ by tuples $(z_{n+1}, z_{n+2})$, where:
\begin{itemize}
    \item $z_{n+1} \in \mathbb{N}_0$ denotes the highest weight of the $\SU(2)$-part.
    \item $z_{n+2} \in \mathbb{Z}$ denotes the weight of the central $\U(1)$-part.
\end{itemize}
Note that for a representation to lift to $\U(2)$, the weights must satisfy the parity condition $z_{n+1} \equiv z_{n+2} \pmod 2$.
\begin{prop}\label{prop:spherical complex Stiefel}
    Let $n \ge 2$. The irreducible representation 
    \[
    \varrho_{\SU(n+1)}(z_1, \dots, z_n) \otimes \varrho_{\SU(2)}(z_{n+1})\otimes \varrho_{\U(1)}(z_{n+2})
    \]
    is $\SU(n-1)\times\Delta (\SU(2)\times \U(1))$-spherical if and only if $z_{n+1} \equiv z_{n+2} \pmod 2$, $z_{n+2} \le -z_{n+1} \le 0$, and the highest weights satisfy:
    \begin{enumerate}
        \item \emph{Case $n=2$:} 
        \[
            2z_1 \ge z_3 - z_4 \ge 2z_2 \ge -z_3 - z_4 \ge 0.
        \]
        
        \item \emph{Case $n \ge 3$:} There exists an integer $k \ge 0$ such that $2k = z_1 + z_2 + z_n + z_{n+2}$. The inner weights are constant, $z_3 = \dots = z_{n-1} = k$ (for $n \ge 4)$, and the boundary weights satisfy:
        \begin{align*}
            \frac{z_{n+1} - z_{n+2}}{2} & \le z_1 \le \frac{z_{n+1} - z_{n+2}}{2} + k, \\
            \frac{z_{n+1} - z_{n+2}}{2} + k - z_1 & \le z_n \le k, \\
            k & \le z_2 \le \frac{z_{n+1} - z_{n+2}}{2} + k - z_n.
        \end{align*}
    \end{enumerate}
    The multiplicity is exactly one.
\end{prop}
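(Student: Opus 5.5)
The proof follows the pattern of Proposition \ref{prop: Al spherical reps}. Here $G=\SU(n+1)$ and $K=\SU(n-1)$, and the base isotropy is $L=K\cdot(\SU(2)\times\U(1))=S(\U(n-1)\times\U(2))$. The $\U(1)$ factor is the centre of $L$, i.e.\ the circle commuting with both $\su(n-1)$ and the $\su(2)$ in the lower $2\times 2$ block.

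By Theorem \ref{thm:operational_branching}, the representation contributes if and only if $\varrho_G\myrestriction_L$ contains $1_K\otimes\varrho_{\SU(2)}(z_{n+1})\otimes\varrho_{\U(1)}(-z_{n+2})$. This uses that $\SU(2)$-representations are self-dual and that the dual of a $\U(1)$-character inverts its weight.

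\textbf{Step 1: lift to $\U(n+1)$ and reduce to one component.} I lift $\varrho_G$ to $\varrho_{\U(n+1)}(z_1,\dots,z_n,0)$ and restrict it to $\U(n-1)\times\U(2)$. The components are $\varrho_{\U(n-1)}(\mathbf x)\otimes\varrho_{\U(2)}(y_1,y_2)$.

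Triviality on $K=\SU(n-1)$ now only forces $\mathbf x=(k^{n-1})$ for some $k\ge0$. In contrast to the type $A$ case, there is no determinant coupling to cancel, because $K$ is special unitary.

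Matching the $\SU(2)$-part gives $y_1-y_2=z_{n+1}$. The remaining central character of such a component is determined by $(n-1)k$ and $y_1+y_2$. These two quantities are tied by the total degree identity $(n-1)k+y_1+y_2=\sum_i z_i$.

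I fix the normalization of the fibre $\U(1)$-weight, as in the parametrization above, so that the required dual character reads $y_1+y_2=-z_{n+2}$. This gives
\[
y_1=\tfrac{z_{n+1}-z_{n+2}}2,\qquad y_2=\tfrac{-z_{n+1}-z_{n+2}}2 .
\]
The requirement that these are integers with $y_1\ge y_2\ge0$ is exactly the parity condition together with $z_{n+2}\le -z_{n+1}\le 0$.

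\textbf{Step 2: reciprocity and a Littlewood--Richardson count.} For $n\ge3$, Lemma \ref{lem:reciprocity} turns the condition into
\[
\mult\Big(\varrho_{\GL(n+1)}(z_1,\dots,z_n,0),\ \varrho_{\GL(n+1)}(k^{n-1})\otimes\varrho_{\GL(n+1)}(y_1,y_2)\Big)>0.
\]

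The main obstacle is that Proposition \ref{prop: tensor product decomposition} assumes $a+b=2k$, whereas here $a+b=y_1+y_2=-z_{n+2}$ is unrelated to $k$. I will therefore rerun its Pieri/Yamanouchi argument verbatim with $a=y_1$ and $b=y_2$ arbitrary. Inspection shows that the hypothesis $a+b=2k$ was never used in placing the boxes. The argument yields multiplicity-freeness and the shapes
\[
\lambda=(k+i_1,\,k+i_2,\,k^{n-3},\,i_3,\,i_4),
\]
subject to $i_1+i_2+i_3+i_4=a+b$ and the same four inequalities as before.

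\textbf{Step 3: translate the conditions.} Since the last entry of $\lambda$ must be $0$, we get $i_4=0$. Matching $\lambda$ with $(z_1,\dots,z_n,0)$ gives $z_1=k+i_1$, $z_2=k+i_2$, $z_n=i_3$, and $z_3=\dots=z_{n-1}=k$. The sum condition then reads $z_1+z_2+z_n-2k=-z_{n+2}$, i.e.\ $2k=z_1+z_2+z_n+z_{n+2}$.

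Substituting $a=(z_{n+1}-z_{n+2})/2$ into the three remaining inequalities yields the three displayed double inequalities literally:
\begin{itemize}
\item $a-k\le i_1\le a$ becomes the bound on $z_1$;
\item $a-i_1\le i_3\le k$ becomes the bound on $z_n$;
\item $0\le i_2\le a-i_3$ becomes the bound on $z_2$.
\end{itemize}

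\textbf{Step 4: the case $n=2$.} Here $K$ is trivial, so there is no constraint on $k$. Restriction $\GL(3)\downarrow\GL(2)\times\GL(1)$ (Proposition \ref{prop: branching GoWa}) requires the interlacing $z_1\ge y_1\ge z_2\ge y_2\ge0$. This branching is multiplicity-free, and the $\GL(1)$ weight is fixed by the degree. Multiplying the interlacing by $2$ and inserting $y_{1,2}$ gives condition (1).

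Multiplicity one follows in all cases from multiplicity-freeness of the branching, respectively of the tensor product.
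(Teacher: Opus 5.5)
You follow the paper's route: lift to $\U(n+1)$, isolate the $K$-trivial components $\varrho_{\U(n-1)}(k^{n-1})\otimes\varrho_{\U(2)}(y_1,y_2)$, then apply Lemma \ref{lem:reciprocity} and a Pieri/Yamanouchi count. You correctly note that Proposition \ref{prop: tensor product decomposition} is stated only for $a+b=2k$, and that its proof never uses this. That is a genuine improvement, since the paper applies the proposition here without checking the hypothesis.

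The gap is in Step 1, where you ``fix the normalization'' so that $y_1+y_2=-z_{n+2}$. You have no such freedom.
\begin{itemize}
\item The fibre circle $\{\mathrm{diag}(e^{-2is}I_{n-1},e^{i(n-1)s}I_2)\}$ acts on $\varrho_{\U(n-1)}(k^{n-1})\otimes\varrho_{\U(2)}(y_1,y_2)$ by $e^{i(n-1)(y_1+y_2-2k)s}$.
\item Equivalently, under $L/K\cong\U(2)$, $(A,B)\mapsto B$, this component is $\varrho_{\U(2)}(y_1-k,y_2-k)$, because $\det A=\det B^{-1}$.
\item So the fibre character is governed by $y_1+y_2-2k$, which does not depend on the lift. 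By contrast, $y_1+y_2$ shifts by $2c$ if you lift to $\varrho_{\U(n+1)}(z_1+c,\dots,z_n+c,c)$ instead.
\item By your own degree identity, $y_1+y_2$ determines the fibre character only together with $\sum_i z_i$. Setting $z_{n+2}:=-(y_1+y_2)$ therefore makes the label of a fibre representation depend on $\varrho_G$, which is not a parametrization of $\widehat{\U(1)}$.
\end{itemize}

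This step does not merely lack justification; it makes the conclusion false. For $n\ge3$ take $z=(1,\dots,1,0)$ and $z_{n+1}=z_{n+2}=0$. All displayed conditions hold with $k=1$, so the statement asserts that $\Lambda^{n-1}\mathbb{C}^{n+1}$ has a nonzero $L$-fixed vector. It cannot: $L$ contains a maximal torus of $\SU(n+1)$, and $\Lambda^{n-1}\mathbb{C}^{n+1}$ has no zero weight. For $n=2$, the choice $z=(1,0)$, $z_3=z_4=0$ would likewise make $\mathbb{C}^3$ spherical for $\mathbb{C}P^2=\SU(3)/L$.

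The paper's proof makes exactly the same identification ``$y_1+y_2=-z_{n+2}$'' without comment. So you inherited this defect from the statement rather than introducing it. It also propagates to the ``minimal basic representation'' $(1,\dots,1,0)$ in Corollary \ref{cor:lambda_1_stiefel}.

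The fix is to impose $y_1-y_2=z_{n+1}$ and $y_1+y_2-2k=-z_{n+2}$, with $z_{n+2}$ the central weight of $L/K\cong\U(2)$. Your Steps 2--4 then go through unchanged and yield:
\begin{itemize}
\item $4k=z_1+z_2+z_n+z_{n+2}$;
\item the three double inequalities with $\tfrac{z_{n+1}-z_{n+2}}{2}$ replaced by $y_1=k+\tfrac{z_{n+1}-z_{n+2}}{2}$;
\item $z_{n+1}+z_{n+2}\le 2k$ in place of $z_{n+2}\le -z_{n+1}\le 0$.
\end{itemize}
For trivial fibre weights this recovers the Grassmannian spherical weights $(2k,k+j,k^{n-3},k-j)$. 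Since now $a+b=2k-z_{n+2}$, your generalization of Proposition \ref{prop: tensor product decomposition} is exactly what is needed.
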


\begin{proof}
    We apply Theorem \ref{thm:operational_branching} and lift the representation $\varrho_{\SU(n+1)}(z_1, \dots, z_n)$ to \linebreak$\varrho_{\U(n+1)}(z_1, \dots, z_n, 0)$. The isotropy group of the base is $S(\U(n-1) \times \U(2))$, embedded block-diagonally. For the representation to be spherical, its restriction to this subgroup must contain the representation $\varrho_{\SU(n-1)}(\mathbf{0}) \otimes \varrho_{\U(2)}(\mathbf{y})$ such that the $\U(2)$-factor is dual to the external fiber representation $\varrho_{\SU(2)}(z_{n+1}) \otimes \varrho_{\U(1)}(z_{n+2})$.
    
    The condition for the $\U(n-1)$-factor forces the internal representation $\varrho_{\U(n-1)}(\mathbf{x})$ to be one-dimensional, i.e., $x_1 = \dots = x_{n-1} = k$.
    For the $\U(2)$-factor, duality implies that the restriction to the semisimple part $\SU(2)$ must be isomorphic to $\varrho_{\SU(2)}(z_{n+1})$ (since $\SU(2)$-representations are self-dual), while the restriction to the center $\U(1)$ must be inverse to $\varrho_{\U(1)}(z_{n+2})$. In terms of weights, this yields the system:
    \[
        y_1 - y_2 = z_{n+1} \quad \text{and} \quad y_1 + y_2 = -z_{n+2}.
    \]
    Solving for $y_1, y_2$ gives $y_1 = (z_{n+1} - z_{n+2})/2$ and $y_2 = (-z_{n+1} - z_{n+2})/2$.
    
       By the reciprocity lemma (Lemma \ref{lem:reciprocity}), finding this specific component is equivalent to finding $\varrho_{\U(n+1)}(z_1, \dots, z_n, 0)$ in the tensor product $\varrho_{\U(n+1)}(k^{n-1}) \otimes \varrho_{\U(n+1)}(y_1, y_2)$. 
    
    Because we lifted the initial $\SU(n+1)$-representation to a polynomial representation of $\U(n+1)$ (where the lowest weight is zero), the highest weights of the resulting tensor factors must be non-negative integers. In particular, this requires $y_1 \ge y_2 \ge 0$, which is the precise condition necessary to apply the combinatorial rules for the tensor product decomposition. The requirement that $y_1$ and $y_2$ are integers enforces the parity relation $z_{n+1} \equiv z_{n+2} \pmod 2$. The condition $y_2 \ge 0$ translates to $-z_{n+1} - z_{n+2} \ge 0$, which yields $z_{n+2} \le -z_{n+1}$.
    
    The sum relation follows directly from the action of the center of $\mathfrak{u}(n+1)$. By applying the identity matrix $I_{n+1}$ to any highest weight vector in the tensor product decomposition, it acts as a scalar equal to the sum of the highest weights. Therefore, a representation can only occur in the tensor product if its total weight equals the sum of the total weights of the factors. This immediately yields:
    \[
        \sum_{i=1}^n z_i = (n-1)k + y_1 + y_2.
    \]
    Substituting $y_1 + y_2 = -z_{n+2}$ yields the stated sum relation $\sum_{i=1}^n z_i = (n-1)k - z_{n+2}$, which uniquely determines the integer $k$.
    To evaluate the remaining conditions, we proceed by cases:      
    
    Case $n=3$: We must find $\varrho_{\U(4)}(z_1, z_2, z_3, 0)$ within $\varrho_{\U(4)}(k, k) \otimes \varrho_{\U(4)}(y_1, y_2)$. This matches the exact setting of Proposition \ref{prop: tensor product decomposition} with the partition $\nu = (k^2)$ and $\mu = (y_1, y_2)$. Applying the proposition, we obtain the stated inequalities for the outer weights $z_1, z_2,$ and $z_3$. Note that for $n=3$, there is no internal block $z_3 \dots z_{n-1}$ to constrain.
    
    Case $n \ge 4$: The required tensor product is $\varrho_{\U(n+1)}(z_1, \dots, z_n, 0)$ within $\varrho_{\U(n+1)}(k^{n-1}) \otimes \varrho_{\U(n+1)}(y_1, y_2)$. Applying Proposition \ref{prop: tensor product decomposition} with $\nu = (k^{n-1})$ and $\mu = (y_1, y_2)$ immediately forces the internal weights to be constant: $z_3 = \dots = z_{n-1} = k$. The inequalities for the remaining outer weights $z_1, z_2$, and $z_n$ follow identically from the proposition.
    
    Finally, for any $n \ge 3$, the sum relation derived above requires $\sum_{i=1}^n z_i = (n-1)k - z_{n+2}$. Since the $n-3$ inner weights are all equal to $k$, we have $\sum_{i=1}^n z_i = z_1 + z_2 + z_n + (n-3)k$. Equating these two expressions and solving for $k$ immediately reduces to the required identity $2k = z_1 + z_2 + z_n + z_{n+2}$.
    
       Case $n=2$: By the reciprocity lemma, the multiplicity of $\varrho_{\U(3)}(z_1, z_2, 0)$ in the tensor product $\varrho_{\U(3)}(k) \otimes \varrho_{\U(3)}(y_1, y_2)$ is identical to the multiplicity of $\varrho_{\U(2)}(y_1, y_2) \otimes \varrho_{\U(1)}(k)$ in the restriction $\varrho_{\U(3)}(z_1, z_2, 0) \downarrow \U(2) \times \U(1)$. According to Proposition \ref{prop: branching GoWa}, this component appears if and only if the weights interlace $(z_1 \ge y_1 \ge z_2 \ge y_2 \ge 0)$ and the $\U(1)$-weight satisfies $k = (z_1 + z_2) - (y_1 + y_2)$. Substituting $a = y_1$ and $b = y_2$ into the interlacing inequalities and multiplying by $2$ directly yields $2z_1 \ge 2a \ge 2z_2 \ge 2b \ge 0$.
\end{proof}

Based on the algebraic classification of spherical representations, we derive the explicit spectral formula. We use the constraints on the weights to simplify the expressions significantly.
\begin{thm}\label{thm: spectrum complex stiefel}
    The spectrum of the Laplace-Beltrami operator on the complex Stiefel manifolds $(V_2(\mathbb{C}^{n+1}), g_{t_0, t_1, t_2})$ consists of the eigenvalues $\eta$ with multiplicities $\mult(\eta)$ given as follows:

    \begin{enumerate}
        \item \emph{Case $n=2$:} The eigenvalues depend on the integers $z_1\geq z_2\geq 0,$ $-z_4 \geq z_3 \geq 0$ and are given by
        \begin{align*}
            \eta(z_1,z_2,z_3,z_4) &= \frac{1}{t_0} \left( z_1(z_1 + 2) + z_2^2 - \frac{1}{3}(z_1+z_2)^2 - \frac{z_3(z_3+2)}{2} - \frac{3}{2}z_4^2 \right) \\
            &\quad + \frac{z_3(z_3+2)}{2t_1} + \frac{3}{2t_2}z_4^2,
        \end{align*}
        provided the parameters satisfy
        \[
          z_3 \equiv z_4 \pmod 2,\quad   z_1 \ge \frac{z_3 - z_4}{2} \ge z_2 \ge \frac{-z_3 - z_4}{2} \ge 0.
        \]
        The multiplicity is given by:
        \[
            \mult(\eta) = \frac{1}{2}(z_3 + 1)(z_1 - z_2 + 1)(z_1 + 2)(z_2 + 1).
        \]

       \item \emph{Case $n \ge 3$:} The eigenvalues depend on integers satisfying $z_1\geq  z_2\geq z_n$, $-z_{n+2} \geq z_{n+1} \geq 0$. Moreover, $2k:=z_1 + z_2 + z_n + z_{n+2}$ has to be an even, non-negative integer. The eigenvalues are given by
        \begin{align*}
             \eta(z_1,z_2,z_n,z_{n+1},z_{n+2}) &= \frac{1}{t_0} \Bigg[ \sum_{i \in \{1,2,n\}} z_i(z_i + n + 2 - 2i) + (n-3)k^2 \\
             &\quad\quad - \frac{\left((n-1)k - z_{n+2}\right)^2}{n+1} - \frac{z_{n+1}(z_{n+1}+2)}{2} - \frac{n+1}{2(n-1)}z_{n+2}^2 \Bigg] \\
             &\quad + \frac{z_{n+1}(z_{n+1}+2)}{2t_1} + \frac{n+1}{2(n-1)t_2}z_{n+2}^2,
        \end{align*}
provided the parameters satisfy 
        \begin{align*}
        z_{n+1} &\equiv z_{n+2} \pmod 2\\
            \frac{z_{n+1} - z_{n+2}}{2} & \le z_1 \le \frac{z_{n+1} - z_{n+2}}{2} + k, \\
            \frac{z_{n+1} - z_{n+2}}{2} + k - z_1 & \le z_n \le k, \\
            k & \le z_2 \le \frac{z_{n+1} - z_{n+2}}{2} + k - z_n.
        \end{align*}
        The multiplicity is given by:
        \begin{align*}
            \mult(\eta) &= \frac{1}{n(n-1)^2(n-2)^3} (z_{n+1} + 1)(z_1 + n)(z_2 + n - 1)(z_n + 1) \\[1mm]
            &\quad \cdot (z_1 - z_2 + 1)(z_1 - z_n + n - 1)(z_2 - z_n + n - 2) \\[1mm]
            &\quad \cdot \binom{z_1 - k + n - 2}{n-3} \binom{z_2 - k + n - 3}{n-3} \binom{k - z_n + n - 3}{n-3} \binom{k + n - 2}{n-3}.
        \end{align*}
    \end{enumerate}
\end{thm}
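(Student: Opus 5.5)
The proof combines three ingredients: Theorem \ref{thm:operational_branching} for the shape of the eigenvalue and its multiplicity, Proposition \ref{prop:spherical complex Stiefel} for the admissible weights, and explicit Casimir constants for the trace form $Q(X,Y)=-\tr(XY)$. Here $s=2$, with $\h_1=\su(2)$ and $\h_2=\u(1)$ sitting in the lower-right $2\times 2$ block of $\su(n+1)$, and scaling parameters $t_1,t_2$. By Proposition \ref{prop:spherical complex Stiefel} the branching multiplicity is one. The spectral multiplicity is therefore $\dim\varrho_{\SU(n+1)}(z_1,\dots,z_n)\cdot(z_{n+1}+1)$, since the $\U(1)$-factor is one-dimensional. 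The eigenvalue is
\[
\eta=\frac{c_Q(\varrho_G)-c_{\su(2)}(z_{n+1})-c_{\u(1)}(z_{n+2})}{t_0}+\frac{c_{\su(2)}(z_{n+1})}{t_1}+\frac{c_{\u(1)}(z_{n+2})}{t_2}.
\]

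\textbf{Step 1: Casimir constants.} We reuse the computation in the proof of Theorem \ref{thm: spec Al 3 alpha delta sasaki}, where $\langle\lambda_i,\lambda_j\rangle=\delta_{ij}-\frac1{n+1}$. It gives
\[
c_Q(\varrho_G)=\sum_k z_k^2+\sum_k(n+2-2k)z_k-\frac1{n+1}\Big(\sum_k z_k\Big)^2 .
\]
The same proof gives $\tfrac12 z_{n+1}(z_{n+1}+2)$ for the $\su(2)$ generated by the root $\lambda_1-\lambda_{n+1}$. We may conjugate this block to the lower-right position without changing $Q$.

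For the centre $\u(1)$ we take the generator $Z=\mathrm{diag}(\tfrac{2i}{n-1}I_{n-1},\,-i I_2)$ up to scale. It commutes with $\su(n-1)\oplus\u(2)$ and is $Q$-orthogonal to both summands. We normalize the $\U(1)$-weight so that $z_{n+2}$ is the character exponent on the scalar matrix part of $\U(2)$, as in the proof of Proposition \ref{prop:spherical complex Stiefel}. The unit vector along $Z$ then acts on this character by a scalar $c\,z_{n+2}$, and the Casimir is $c^2z_{n+2}^2$. We expect $c^2=\frac{n+1}{2(n-1)}$; for $n=2$ this gives $\frac32$. We verify it via $Q(Z,Z)=\frac{4}{n-1}+2=\frac{2(n+1)}{n-1}$ together with the eigenvalue $i$ of $Z$ on the relevant weight, keeping in mind that the $\U(2)$-central character and the $\U(1)$ factor of $L$ are related through $S(\U(n-1)\times\U(2))$. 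This normalization is the step I expect to be the main obstacle. I would pin it down by checking the $n=2$ formula against a small explicit case: on the weight $(1,0)$, the functions from the first column of the frame.

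\textbf{Step 2: simplify the eigenvalue.} For $n\ge3$, Proposition \ref{prop:spherical complex Stiefel} gives $z_3=\dots=z_{n-1}=k$ and $\sum_k z_k=(n-1)k-z_{n+2}$. The middle block therefore contributes $(n-3)k^2$ to the quadratic part. Its linear part is $k\sum_{j=3}^{n-1}(n+2-2j)=0$, exactly as in the proof of Theorem \ref{thm: spec Al 3 alpha delta sasaki}. This leaves the displayed bracket.

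For $n=2$ we write $z_1^2+2z_1+z_2^2-\frac13(z_1+z_2)^2$ directly. The parameter constraints are those of Proposition \ref{prop:spherical complex Stiefel}, rewritten with $y_1=\frac{z_{n+1}-z_{n+2}}2$ and $y_2=\frac{-z_{n+1}-z_{n+2}}2$.

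\textbf{Step 3: multiplicity.} We apply the Weyl dimension formula of \cite[Thm. 6.3]{Fulton_Harris} with $z_3=\dots=z_{n-1}=k$, exactly as in Theorem \ref{thm: spec Al 3 alpha delta sasaki}. The four families of cross factors become consecutive products, namely $(z_1-k+\dots)$, $(z_2-k+\dots)$, $(k-z_n+\dots)$ and $(k+n+1-j)$. Each packages into a binomial coefficient $\binom{\cdot}{n-3}$. The constant $\frac1{n(n-1)^2(n-2)^3}$ is reused verbatim from that proof. The only change is that $S/4$ is replaced by $k$; there $4k=S$, here $2k=S+z_{n+2}$. We multiply by $(z_{n+1}+1)$ to finish.
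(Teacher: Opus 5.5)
Your route is the paper's route: Theorem \ref{thm:operational_branching} for the shape of $\eta$ and of the multiplicity, Proposition \ref{prop:spherical complex Stiefel} for the admissible weights, the $\SU(n+1)$-Casimir evaluated on the constant block $z_3=\dots=z_{n-1}=k$, and the Weyl dimension formula with $S/4$ replaced by $k$. Your Steps 2 and 3 reproduce the paper's argument.

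The genuine gap is Step 1, and it cannot be closed the way you intend, because your own data contradict the constant you expect. For your $Z=\mathrm{diag}(\tfrac{2i}{n-1}I_{n-1},-iI_2)$, the image of $\exp(\theta Z)$ in $L/K\cong\U(2)$ is $e^{-i\theta}I_2$. So $Z$ acts by $\pm i z_{n+2}$ on a character with central exponent $z_{n+2}$. The unit vector $Z/|Z|$ then gives the Casimir $z_{n+2}^2/Q(Z,Z)=\frac{n-1}{2(n+1)}\,z_{n+2}^2$. This is the reciprocal of the $\frac{n+1}{2(n-1)}$ you want; for $n=2$ it is $\tfrac16$ instead of $\tfrac32$. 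The paper's proof makes the same slip: its $Z$ has image $e^{i\frac{n-1}{n+1}\theta}I_2$ in $L/K$, so it acts by $i\tfrac{n-1}{n+1}$ times the central exponent, not by $iz_{n+2}$.

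There is a second, independent problem in the labelling you inherit from Proposition \ref{prop:spherical complex Stiefel}. Setting $y_1+y_2=-z_{n+2}$ ignores that $\det(A)^k=\det(B)^{-k}$ on $S(\U(n-1)\times\U(2))$. Hence the $K$-fixed summand is $\varrho_{\U(2)}(y_1-k,y_2-k)$ as a module of $L/K\cong\U(2)$, and the central exponent that actually occurs is $y_1+y_2-2k=-(z_{n+2}+2k)$. In the statement's parameters the $\u(1)$-Casimir is therefore $\frac{n-1}{2(n+1)}(z_{n+2}+2k)^2$, with $k=z_1+z_2+z_4$ when $n=2$.

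The explicit check you propose settles the matter, and it goes against the statement. For $n=2$, $V_2(\C^3)=\SU(3)$ carries a left-invariant metric. On matrix coefficients of $\varrho=\C^3$ the Laplacian acts through $v$ by $\frac{1}{t_0}C_{\mathfrak m}+\frac{1}{t_1}C_{\su(2)}+\frac{1}{t_2}C_{\u(1)}$, where
$C_{\su(2)}=\tfrac32\,\mathrm{diag}(0,1,1)$, $C_{\u(1)}=\tfrac16\,\mathrm{diag}(4,1,1)$, and $C_{\mathfrak m}=\tfrac83-C_{\su(2)}-C_{\u(1)}=\mathrm{diag}(2,1,1)$.
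This gives two eigenvalues:
$\frac{2}{t_0}+\frac{2}{3t_2}$ with multiplicity $3$, and $\frac{1}{t_0}+\frac{3}{2t_1}+\frac{1}{6t_2}$ with multiplicity $6$.
The corrected $\u(1)$-term reproduces exactly the values $\tfrac23$ and $\tfrac16$ here. The stated formula instead gives $\frac{8}{3t_0}$ for $(1,0,0,0)$ and $-\frac{1}{3t_0}+\frac{3}{2t_1}+\frac{3}{2t_2}$ for $(1,0,1,-1)$. The latter even has a negative coefficient of $1/t_0$, which is impossible since $C_{\mathfrak m}\ge 0$. The two agree only when $t_2=t_0$.

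So the $t_2$-dependence in the statement cannot be proved by any argument. Carried out correctly, your Steps 1--3 establish a corrected version. In it, both occurrences of $\frac{n+1}{2(n-1)}z_{n+2}^2$ are replaced by $\frac{n-1}{2(n+1)}(z_{n+2}+2k)^2$, with $z_{n+2}$ kept as the label of Proposition \ref{prop:spherical complex Stiefel}. The admissibility conditions and the multiplicities stay as stated.
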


\begin{proof}
    The eigenvalue formula is derived by applying the general result for naturally reductive spaces from \cite{AgricolaHenkel25}. The Casimir eigenvalue of the representation $\varrho_{\SU(n+1)}(z_1, \dots, z_n)$ with respect to the metric $Q$ has been computed in the proof of Theorem \ref{thm: spec Al 3 alpha delta sasaki} and is given by
    \begin{equation}\label{eq Casimir SU}
        \sum_{i=1}^{n} z_i^2 + \sum_{i=1}^{n}(n+2-2i) z_i - \frac{1}{n+1}\left(\sum_{i=1}^{n}z_i\right)^2.
    \end{equation} For the specific highest weight $(z_1, z_2, k, \dots, k, z_n)$ stated in Proposition \ref{prop:spherical complex Stiefel}, the general expression simplifies by separating the variable terms $(i=1, 2, n)$ from the constant block $(3 \le i \le n-1)$. The linear term of the block in Equation \eqref{eq Casimir SU} vanishes because $\sum_{i=3}^{n-1} (n+2-2i) = 0$, and the sum of squares contributes $(n-3)k^2$. Using the sum relation $\sum_{i=1}^n z_i = (n-1)k - z_{n+2}$ from Proposition \ref{prop:spherical complex Stiefel}, the squared sum term in Equation \eqref{eq Casimir SU} becomes $((n-1)k - z_{n+2})^2/(n+1)$.
    
    The Casimir eigenvalue for the $\SU(2)$ fiber representation $\varrho_{\SU(2)}(z_{n+1})$ is $z_{n+1}(z_{n+1}+2)/2$. The central $U(1)$ factor of the vertical space is generated by the element $Z \in \mathfrak{su}(n+1)$, which is the metric dual of the functional $\lambda_{n+2}$ with respect to $Q$, measuring the central weight $z_{n+2}$. It is given by:
\begin{align*}
    Z = \operatorname{diag} \left( \underbrace{-\frac{2}{n+1}, \dots, -\frac{2}{n+1}}_{n-1}, \underbrace{\frac{n-1}{n+1}, \frac{n-1}{n+1}}_{2} \right).
\end{align*}
With respect to the bi-invariant metric $Q(X,Y) = -\tr(XY)$, its squared norm is:
\begin{align*}
    Q(Z, Z) = (n-1) \left( \frac{2}{n+1} \right)^2 + 2 \left( \frac{n-1}{n+1} \right)^2 = \frac{2(n-1)}{n+1}.
\end{align*}
The contribution of the $U(1)$ factor to the Laplace spectrum is determined by the Casimir eigenvalue of the representation with weight $z_{n+2}$, namely $\frac{z_{n+2}^2}{Q(Z,Z)}$. The Casimir eigenvalue for a representation of weight $z_{n+2}$ is therefore $\frac{n+1}{2(n-1)} z_{n+2}^2$.
    
    Subtracting the fiber contributions from the total Casimir eigenvalue and adding them back with the respective scaling factors $1/t_1$ and $1/t_2$ yields the stated formula.
    
   The multiplicity is the product of the dimensions of the $\SU(n+1)$, $\SU(2)$, and $\U(1)$ representations. Since the $\U(1)$ representation is one-dimensional and the $\SU(2)$ dimension is $z_{n+1} + 1$, the total multiplicity reduces to scaling the dimension of the base representation $\varrho_{\SU(n+1)}$ by the factor $(z_{n+1} + 1)$. 
    
    For $n=2$, the base group is $\SU(3)$. Evaluating the Weyl dimension formula for the highest weight $(z_1, z_2)$ directly yields the explicitly stated polynomial. 
    
For $n \ge 3$, the dimension of the $\SU(n+1)$ representation is computed completely analogously to the derivation in Theorem \ref{thm: spec Al 3 alpha delta sasaki}. Because the fiber representation $\varrho_{\U(1)}(z_{n+2})$ is one-dimensional, the total multiplicity depends entirely on $\varrho_{\SU(n+1)}$ and the $\SU(2)$ factor. Since both geometric cases require a constant inner block of $n-3$ weights, the combinatorial structure of the Weyl dimension formula is identical. Consequently, the multiplicity formula from Theorem \ref{thm: spec Al 3 alpha delta sasaki} applies directly by replacing the inner weight parameter $\frac{z_1+z_2+z_n}{4}$ with $k = \frac{z_1 + z_2 + z_n+z_{n+2}}{2}$.
\end{proof}
\subsection{Real Stiefel manifolds $V_2(\mathbb{R}^{m})$}
The real Stiefel manifold $V_2(\mathbb{R}^{m}) \cong \SO(m)/\SO(m-2)$ fibers over the oriented Grassmannian $G_2(\mathbb{R}^m)$ with fiber $H \cong \SO(2)$. Topologically, the real Stiefel manifolds $V_2(\mathbb{R}^m)$ are Spin manifolds. Geometrically, they can be naturally identified with the unit tangent bundles of the spheres $T^1 S^{m-1}$, meaning they carry a standard contact and Sasaki structure \cite{GonzalezDavila22}.

The geometry defined by the canonical variation $g_{t_0, t_1}$ remains an active area of research: For example, on the 7-dimensional Stiefel manifold $V_2(\mathbb{R}^5)$, Moreno and Portilla \cite{MorenoPortilla26} recently demonstrated that the canonical variation naturally carries invariant coclosed $G_2$-structures. In their work, varying the metric parameter along the fiber serves as the primary tool to classify and study the rigidity of homogeneous $G_2$-instantons. 

We consider the family of metrics $g_{t_0, t_1}$ defined by rescaling the bi-invariant background metric $Q(X,Y) = -\frac{1}{2}\tr(XY)$ by $t_0$ on the horizontal subspace $\mathfrak{m}$ and by $t_1$ on the vertical subspace $\mathfrak{h}\cong \so(2)$:
\begin{align}\label{eq:real_stiefel_metric}
    g_{t_0, t_1} = t_0 \cdot Q\myrestriction_{\mathfrak{m}} + t_1 \cdot Q\myrestriction_{\so(2)}.
\end{align}
For $t_0 = t_1$, this recovers the normal homogeneous metric. We denote the ratio of the scaling factors by $t = t_1/t_0$.
We now determine the spherical representations required to compute the spectrum of these metrics. 
\begin{prop}[{\cite{Jensen73, Jensen75},\cite[Thm. 1]{Kerr98}}]\label{prop: Einstein real Stiefel}
    Let $m \ge 3$. There is (up to scaling) exactly one $\SO(m)$-invariant Einstein metric on $V_2(\R^m)$. This metric is contained in the canonical variation $g_{t_0, t_1}$ and is given by the ratio
        \begin{align*}
           t=\frac{t_1}{t_0} = \frac{2(m-2)}{m-1}.
        \end{align*}
        The Einstein constant is given by $$\Lambda=\frac{(m-2)^2}{t_0(m-1)} .$$
\end{prop}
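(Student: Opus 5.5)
The uniqueness part I would take from the cited classification (Kerr's theorem, which builds on Jensen's work), since reproving the full classification of invariant metrics is beyond this paper's scope. What I would actually verify is the location of the Einstein metric inside the canonical variation and its Einstein constant. The plan is a direct Ricci computation for the submersion $V_2(\R^m)\to G_2(\R^m)$ with totally geodesic fibres. This uses O'Neill's formulas, or equivalently the standard Ricci formula for homogeneous metrics on a space with isotropy decomposition $\mathfrak{k}^\perp=\mathfrak{m}\oplus\h$.

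First I record the structure of the isotropy representation. For $m\ge 4$, $\h\cong\so(2)$ and $\mathfrak{m}\cong\R^{m-2}\oplus\R^{m-2}$ as $\SO(m-2)$-modules. These two copies are interchanged by $\SO(2)$, and $\mathrm{ad}$ of the unit vector of $\h$ acts on $\mathfrak{m}$ as a complex structure. Because the fibre is abelian, the formula is simple. With $Q=-\tfrac12\tr$, the base Grassmannian is symmetric and Einstein with $\Ric_{G_2}=\tfrac{m-2}{?}\,Q$; the constant is to be fixed from the Killing form, $B=(m-2)\cdot(-\tr)=2(m-2)Q$, so $\Ric_{\text{base}}=\tfrac12 B=(m-2)Q$ in the normal metric. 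The O'Neill tensor $A$ is then computed from $[\mathfrak{m},\mathfrak{m}]_{\h}$.

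Next I apply O'Neill's formulas for $g_{t_0,t_1}$ with $t=t_1/t_0$:
\begin{align*}
\Ric(U,U)&=t^2\cdot\tfrac{1}{4}\,\textstyle\sum_i |[U,X_i]_{\mathfrak{m}}|^2_{Q}\cdot\tfrac{1}{t_0}\cdot c_1,\\
\Ric(X,X)&=\Ric_{\text{base}}(X,X)-2t\,|A_X|^2\text{-term}.
\end{align*}
Both constants $c_1,c_2$ are obtained by computing $\sum_i Q([Z,X_i],[Z,X_i])$ for the unit vertical vector $Z$ and a $Q$-orthonormal basis $X_i$ of $\mathfrak{m}$. I expect this to give $\Ric(\text{vert})=\tfrac{t}{2t_0}(m-2)\,g$ and $\Ric(\text{hor})=\tfrac{1}{t_0}\bigl((m-2)-\tfrac{t}{2}\bigr)g$, up to normalization. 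Equating the two gives $t(m-2)/2\cdot$const$\,=(m-2)-t/2$. After the constants are fixed correctly, this should solve to $t=\tfrac{2(m-2)}{m-1}$. Substituting back then gives $\Lambda=\tfrac{(m-2)^2}{t_0(m-1)}$, and I check this against the horizontal equation. For $m=3$ the space is $\SO(3)$ with a Berger metric. There I verify separately that the ratio gives $t=1$, which is the bi-invariant metric, consistent with the formula.

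The main obstacle is bookkeeping of normalizations. This includes the factor $\tfrac12$ in $Q$, the norm of the generator of $\so(2)$, and the exact coefficient in the $A$-tensor term. I would check these against the known Einstein constants of Jensen's metric in Kerr's normalization (Killing form), rescaling by $B=2(m-2)Q$, much as the complex Stiefel case was matched to \cite{ArvanitoyeorgosSakaneStatha20}.
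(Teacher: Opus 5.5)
Your verification is correct for what it actually proves, namely the location $t=\tfrac{2(m-2)}{m-1}$ and the value of $\Lambda$. It is, however, a genuinely different route from the paper's. The paper computes nothing here. Uniqueness is taken from Jensen and Kerr, and the ratio is read off from Jensen's Prop.~11 with $(c,r,k)=(0,1,\tfrac{m-2}{2})$, after identifying Jensen's $t^2$ with $t=t_1/t_0$. The Einstein constant is simply stated.

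You instead derive both from the canonical-variation Ricci formulas (Besse, Prop.~9.70). The constants you left open are fixed by an observation you already made: $\operatorname{ad}(Z)$, $Z=E_{12}$, is a $Q$-orthogonal complex structure on $\mathfrak m$. Hence $\sum_i|[Z,X_i]|_Q^2=2(m-2)$ and $\sum_i Q([X,X_i],Z)\,Q([Y,X_i],Z)=Q(X,Y)$, which give
\[
\Ric(Z,Z)=\tfrac{m-2}{2}\,t^2,\qquad \Ric(X,Y)=\bigl(m-2-\tfrac t2\bigr)Q(X,Y)\quad (X,Y\in\mathfrak m).
\]
These are exactly the eigenvalues $\tfrac{(m-2)t}{2t_0}$ and $\tfrac{1}{t_0}\bigl(m-2-\tfrac t2\bigr)$ you anticipated. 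You should drop the stray factor $\tfrac{1}{t_0}c_1$ in your display, since the Ricci tensor is scale invariant.

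Your route buys a self-contained check in precisely the normalization $Q=-\tfrac12\tr$ and parametrization $t=t_1/t_0$ that Section~\ref{sec:applications} uses for $2\Lambda$. The factor $2(m-2)$ between $-B$ and $Q$, and the $t$ versus $t^2$ convention, are exactly where matching against a reference goes wrong. The paper's citation is shorter and keeps existence and uniqueness under one reference.

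Two points should be added.

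First, an Einstein metric also needs $\Ric(\mathfrak m,\h)=0$, which you never address; in Besse's formula this is the term $t\,g((\check\delta A)X,U)$. It holds for all $m\ge 3$. For $h\in H=\SO(2)$, the map $gK\mapsto hgh^{-1}K$ is an isometry of $g_{t_0,t_1}$ fixing $eK$; it is the isotropy element $(h,h)\in K'$ of Theorem~\ref{thm:naturally_reductive_realization}. For $h\neq e$, its differential $\Ad(h)$ is trivial on $\h$ and has no nonzero fixed vectors on $\mathfrak m$. Invariance under $K$ alone would not suffice for $m=3$, where $K$ is trivial.

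Second, the uniqueness you import cannot be valid for $m=4$, and this affects the paper's statement just as much. The map $(u,v)\mapsto(u,\bar u v)$ identifies $V_2(\R^4)$ with $S^3\times S^2\subset\mathbb H\times\operatorname{Im}\mathbb H$. It turns the $\SO(4)$-action $x\mapsto px\bar q$ into $(u,w)\mapsto(pu\bar q,\,qw\bar q)$. Hence the Einstein product of round metrics with $r_{S^3}^2=2r_{S^2}^2$ is $\SO(4)$-invariant. It is not homothetic to Jensen's metric at $t=\tfrac43$, which is the Sasaki--Einstein metric on $T^{1,1}$: normalized to $\Lambda=4$, their volumes are $\pi^3/\sqrt2$ and $16\pi^3/27$. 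At $m=4$ the summands $\R^{m-2}=\R^2$ of $\mathfrak m$ are of complex type for $K=\SO(2)$, so there are more invariant metrics than for $m\ge5$. The appeal to Kerr therefore has to exclude $m=4$.
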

We briefly contextualize this result.
\begin{rem}
This condition is derived from \cite[Prop. 11]{Jensen73} with the parameters $(c=0, r=1, k=\frac{m-2}{2})$, identifying the parameter $t^2$ in \cite{Jensen73} with our ratio $t$.
        \begin{enumerate}
            \item For $m=3$, the solution is $t=1$, which recovers the normal homogeneous metric of constant curvature on $\SO(3)$.
        \item For $m > 3$, the normal homogeneous $(t=1)$ is not Einstein.
    \end{enumerate}
    The specific pinching constants $\delta = K_{\min}/K_{\max}$ can be found in\cite[Prop. 5]{Jensen75}.
\end{rem}
We denote by $n$ the rank of $\SO(m)$ and parameterize the irreducible representations of $\SO(m)$ by their highest weights $\varrho_{\SO(m)}(z_1,\dots,z_n)$ and those of the fiber group by $\varrho_{\SO(2)}(z_{n+1})$, where $z_{n+1} \in \mathbb{Z}$.
To explicitly compute the Laplace-Beltrami spectrum for this entire family of deformed metrics, we must determine the algebraically admissible representations.

\begin{prop}\label{prop: spherical real Stiefel}
    Let $m \ge 3$ and let $n$ denote the rank of $\SO(m)$. The irreducible representation 
    \[
    \varrho_{\SO(m)}(z_1, \dots, z_n) \otimes \varrho_{\SO(2)}(z_{n+1})
    \]
    is $ \SO(m-2)\times \Delta \SO(2)$-spherical if and only if the highest weight satisfies
    \[
    z_3 = \dots = z_n = 0 \quad (\text{for } m \ge 6)
    \]
    and the fiber weight $z_{n+1}$ satisfies the condition:
    \[
    |z_{n+1}| \le z_1 - |z_2| \quad \text{and} \quad z_{n+1} \equiv z_1 - z_2 \pmod 2,
    \]
    under the convention that $z_2 = 0$ for $m=3$. In all valid cases, the multiplicity is exactly one.
\end{prop}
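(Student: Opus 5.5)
By Theorem \ref{thm:operational_branching}, the tensor product $\varrho_{\SO(m)}(z_1,\dots,z_n)\otimes\varrho_{\SO(2)}(z_{n+1})$ is spherical exactly when the restriction of $\varrho_{\SO(m)}(z_1,\dots,z_n)$ to the base isotropy $L=\SO(2)\times\SO(m-2)$ contains $\varrho_{\SO(2)}(-z_{n+1})\otimes 1_{\SO(m-2)}$. The associated multiplicity is the number of times this component occurs. The plan is to obtain this in two stages: first restrict $\SO(m)\downarrow\SO(m-2)$, then read off the residual $\SO(2)$-weights on the $\SO(m-2)$-invariants. Because the set of $\SO(2)$-weights occurring is symmetric under $w\mapsto -w$ (conjugation by an element of $\mathrm{O}(2)\times\mathrm{O}(m-2)$ inside $\SO(m)$ inverts the $\SO(2)$-factor), I may replace $-z_{n+1}$ by $z_{n+1}$. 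The claim then becomes: the $\SO(m-2)$-fixed subspace $V^{\SO(m-2)}$ of $\varrho_{\SO(m)}(z)$ decomposes under $\SO(2)$ as $\bigoplus_{w}\C_w$, where $w$ runs over $|w|\le z_1-|z_2|$ with $w\equiv z_1-z_2\pmod 2$, each weight occurring once.

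For the $\SO(m-2)$-invariants, I iterate the interlacing rules of Proposition~\ref{prop: branching GoWa} twice along $\SO(m)\supset\SO(m-1)\supset\SO(m-2)$. A trivial $\SO(m-2)$-type forces the intermediate $\SO(m-1)$-weight to be $(y_1,0,\dots,0)$, and then interlacing with $z$ forces $z_3=\dots=z_n=0$, together with $z_1\ge y_1\ge|z_2|$. Each such $y_1$ contributes exactly one $\SO(m-2)$-invariant line, so $\dim V^{\SO(m-2)}=z_1-|z_2|+1$. This matches the number of admissible $w$, which is the consistency check I want. The small cases $m=3,4,5$ I would handle directly, noting the convention $z_2=0$ for $m=3$ and the sign of $z_2$ in type $D_2$.

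The main obstacle is identifying the $\SO(2)$-weights on this $(z_1-|z_2|+1)$-dimensional space, since the chain above does not see the complementary $\SO(2)$. My first approach is a classical model. The space $V^{\SO(m-2)}$ is the space of $\SO(m-2)$-invariant vectors, i.e.\ it corresponds to the $\SO(m)$-isotypic component of $L^2(V_2(\R^m))$. I would use the Gelbart/Stiefel-harmonics description or the branching of \cite{Chami} restricted to the trivial $\SO(m-2)$-part: for $\SO(m)\downarrow\SO(2)\times\SO(m-2)$ with trivial second factor, the occurring $\SO(2)$-types $x$ satisfy $z_1\ge|x|$ subject to a parity/interlacing condition. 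As a fallback I would compute directly for $z_2=0$, where $\varrho(z_1,0,\dots)$ is the harmonic polynomials of degree $z_1$. Restricted to polynomials in the first two coordinates that are invariant under $\SO(m-2)$, the $\SO(2)$-weights there are $z_1,z_1-2,\dots,-z_1$, which can be confirmed by a highest-weight computation on $r^{2j}(x_1\pm ix_2)^{z_1-2j}$ plus harmonic correction. I would then get the general case $z_2\neq0$ by an induction, tensoring with the vector representation or using Pieri-type arguments, which shifts the weight range to $|w|\le z_1-|z_2|$ with parity $z_1-z_2$. Finally, the dimension count above forces each admissible weight to occur with multiplicity exactly one.
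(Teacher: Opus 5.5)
\textbf{The gap: the weights for $z_2\neq 0$.} Several parts of your proposal are correct for $m\ge 4$: the reduction via Theorem~\ref{thm:operational_branching}, the symmetry $w\mapsto -w$, and the count $\dim V^{\SO(m-2)}=z_1-|z_2|+1$ along $\SO(m)\supset\SO(m-1)\supset\SO(m-2)$ (which also yields $z_3=\dots=z_n=0$). The count is also a good device for multiplicity one.

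The gap is the step that carries the content: showing that every $w$ with $|w|\le z_1-|z_2|$ and $w\equiv z_1-z_2 \pmod 2$ actually occurs on $V^{\SO(m-2)}$. You establish this only for $z_2=0$.

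The induction on $z_2$ by tensoring with $\C^m$ (or by Pieri-type arguments) does not work as stated. Let $U$ be the representation from the previous step. Then $(U\otimes\C^m)^{\SO(m-2)}$ is not determined by $U^{\SO(m-2)}$ and $(\C^m)^{\SO(m-2)}=\mathrm{span}(e_1,e_2)$: the part of $U$ of $\SO(m-2)$-type $\C^{m-2}$ pairs with $\mathrm{span}(e_3,\dots,e_m)$ and produces further invariants. You would also still have to separate the contributions of the several irreducible summands of $U\otimes\C^m$.

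The literature route you sketch first is too vague to count as a proof, and the bound $|x|\le z_1$ you quote is not the sharp one. The paper does not prove this step by hand either. It quotes Tsukamoto's branching theorem for $\SO(m)\downarrow\SO(2)\times\SO(m-2)$. There the interlacing conditions force $z_3=\dots=z_n=0$, and for trivial $\SO(m-2)$-type the generating function collapses to $(X^{l_0+1}-X^{-(l_0+1)})/(X-X^{-1})$ with $l_0=z_1-|z_2|$. This gives the weights and multiplicity one at once.

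\textbf{How to close the gap within your route.} You can keep your self-contained approach by exhibiting explicit highest weight vectors.
By Frobenius reciprocity, $(V^*)^{\SO(m-2)}$ with its $\SO(2)$-action is isomorphic to the space of highest weight vectors of weight $z=(z_1,z_2,0,\dots,0)$ in $C^\infty(V_2(\R^m))$, with $\SO(2)$ acting from the right.
Set up the following data:
\begin{itemize}
\item $a$ is the highest weight vector of $\C^m$ and $b$ is a weight vector of weight $\lambda_2$, so that $a\wedge b$ is the highest weight vector of $\Lambda^2\C^m$ (e.g.\ $a=e_1-ie_2$, $b=e_3-ie_4$);
\item $\langle\cdot,\cdot\rangle$ is the complex bilinear form and $D(x,y)=\langle x,a\rangle\langle y,b\rangle-\langle x,b\rangle\langle y,a\rangle$;
\item for $p+q=z_1-z_2$, put $F_{p,q}(x,y)=\langle x+iy,a\rangle^p\langle x-iy,a\rangle^q D(x,y)^{z_2}$.
\end{itemize}
Each factor of $F_{p,q}$ is the image of a highest weight vector under an $\SO(m)$-equivariant map into functions. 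Since $\mathfrak{n}^+$ acts by derivations, $F_{p,q}$ is a highest weight vector of weight $z$. It has fiber weight $\pm(q-p)$, and $F_{p,q}(e_1,e_3)=1$, so it is nonzero.
These functions realize all $z_1-z_2+1$ admissible weights. Your dimension count then excludes every other weight and forces multiplicity one. For $m=4$ and $z_2<0$, use $\bar b$ and $D^{|z_2|}$ instead.

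\textbf{The case $m=3$ is false as stated.} Your count breaks down at $m=3$: there $\SO(m-2)$ is trivial and $\dim V^{\SO(1)}=2z_1+1$. In fact the statement itself fails there.
\begin{itemize}
\item $\varrho_{\SO(3)}(z_1)$, restricted to its maximal torus $\SO(2)$, contains every weight $-z_1,\dots,z_1$ exactly once.
\item For instance, $e_3\in\C^3=\varrho_{\SO(3)}(1)$ is $\SO(2)$-fixed, although $0\not\equiv 1\pmod 2$.
\item In the construction above, an extra factor $\langle x\times y,a\rangle$ of fiber weight $0$ supplies the other parity.
\end{itemize}
The paper's justification of the parity at $m=3$ (``because the representation lifts from $\SO(3)$'') is not valid. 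The correct condition there is just $|z_{n+1}|\le z_1$, with each weight of multiplicity one. So the direct check of $m=3$ you plan cannot confirm the claim, and the proposition should be stated for $m\ge 4$.
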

\begin{proof}
 According to Theorem \ref{thm:operational_branching}, the representation is spherical if and only if the restriction of $\varrho_{\SO(m)}(z_1,\dots,z_n)$ to the base isotropy group $K \cdot H \cong \SO(m-2) \times \SO(2)$ contains the component $1_K \otimes \varrho_{\SO(2)}(z_{n+1})^*$. Since the dual of $\varrho_{\SO(2)}(z_{n+1})$ is exactly $\varrho_{\SO(2)}(-z_{n+1})$, we require the restriction to contain $1_{\SO(m-2)} \otimes \varrho_{\SO(2)}(-z_{n+1})$.
    
  For $m=3$, the base group is $\SO(3)$ (rank $n=1)$ and the base isotropy is trivial. The branching from $\SO(3)$ to the fiber group $\SO(2)$ is governed by the classical branching rules (see Proposition \ref{prop: branching GoWa} for $\Spin(3)\downarrow \Spin(2))$. The irreducible representation $\varrho_{\SO(3)}(z_1)$ decomposes into $\SO(2)$-representations of weight $-z_{n+1}$ exactly when $|-z_{n+1}| \le z_1$. Because the representation lifts from $\SO(3)$, $z_1$ and $z_{n+1}$ must have the same parity, yielding $z_{n+1} \equiv z_1 \pmod 2$. Setting $z_2 = 0$, this precisely matches the stated inequalities and parity constraints. Each valid fiber weight appears with multiplicity one.

    For $m \ge 4$, this branching problem is analyzed in \cite{Tsukamoto81}. We evaluate the conditions of \cite[Thm. 1.2]{Tsukamoto81} (for odd $m)$ and \cite[Thm. 1.1]{Tsukamoto81} (for even $m)$ under the constraint that the representation on $\SO(m-2)$ is trivial. In the notation of \cite{Tsukamoto81}, this corresponds to setting the subgroup parameters $k_1 = \dots = 0$, while identifying the parameter $k_0$ with the negative fiber weight $-z_{n+1}$.

    The first necessary condition in \cite{Tsukamoto81} imposes interlacing inequalities on the weights, specifically $h_{i-1} \ge k_i \ge h_{i+1}$ (which translates to $z_i \ge k_i \ge z_{i+2})$. Substituting $k_i=0$ for $i \ge 1$ yields $z_1 \ge 0 \ge z_3$. Since highest weights are dominant, this forces $z_3 = 0$, and inductively $z_j = 0$ for all $n\geq j \ge 3$. Thus, only representations with highest weights of the form $ (z_1, z_2)$ can be spherical.

    The second condition in \cite{Tsukamoto81} determines the multiplicity as the coefficient of $X^{z_{n+1}}$ in a specific generating function. For our case $(z_3=\dots=0$ and $k_1=\dots=0)$, the auxiliary parameters $l_i$ simplify to $l_0 = z_1 - |z_2|$ and $l_i = 0$ for $i \ge 1$. Although the generating functions provided in \cite{Tsukamoto81} differ for even and odd $m$, in our specific case—where the representation on the $\SO(m-2)$-factor is trivial—the additional terms cancel out. Consequently, for both parities of $m$, the generating function reduces to the same finite geometric series:
    \[
   \frac{X^{z_1-z_2+1}-X^{-(z_1-z_2+1)}}{X-X^{-1}} =\sum_{j=0}^{z_1-z_2} X^{(z_1-z_2) - 2j}.
    \]
    The negative fiber weight $-z_{n+1}$ appears in the branching if and only if it corresponds to one of the exponents in this series. This is equivalent to the conditions $|z_{n+1}| \le z_1 - z_2$ and $z_{n+1} \equiv z_1 - z_2 \pmod 2$. If these conditions are satisfied, the coefficient is exactly 1, implying that there is only one spherical vector.
\end{proof}
With the spherical representations classified, we can now apply Theorem \ref{thm:operational_branching} to obtain the explicit eigenvalues of the Laplace-Beltrami operator.

\begin{thm}\label{thm: spectrum real stiefel}
    The spectrum of the Laplace-Beltrami operator on $(V_2(\mathbb{R}^{m}), g_{t_0, t_1})$ for $m \ge 3$ consists of the eigenvalues
    \begin{align*}
        \eta(z_1,z_2, z_{n+1}) = \frac{1}{2t_0} \left( z_1(z_1 + m - 2) + z_2(z_2 + m - 4) -  z_{n+1}^2 \right) + \frac{1}{2t_1}  z_{n+1}^2,
    \end{align*}
 where the parameters $z_1, z_2, z_{n+1} \in \mathbb{Z}$ satisfy $z_1 \ge |z_2|$ (with $z_2 \ge 0$ for $m \ge 5$, and the convention $z_2=0$ if $m=3$) and fulfill the conditions:
    \[
    |z_{n+1}| \le z_1 - |z_2| \quad \text{and} \quad z_{n+1} \equiv z_1 - z_2 \pmod 2.
    \]
    The multiplicity of each eigenvalue is given by:
    \begin{enumerate}
        \item For $m=3$: 
        $$\mult(\eta(z_1))=2z_1 + 1$$
        \item For $m=4$: 
        $$\mult(\eta(z_1,z_2))=(z_1 + z_2 + 1)(z_1 - z_2 + 1) $$
        \item For $m\geq 5:$
        \begin{align*}
       \mult(\eta(z_1,z_2))= &\frac{(z_1+z_2 + m-3)(z_1-z_2 + 1)\cdot(2z_1+m-2)(2z_2+m-4)}{(m-2)(m-3)(m-4)^2} \\ 
        & \cdot
        \binom{z_1+m-4}{m-5}\binom{z_2+m-5}{m-5}.
    \end{align*}
    \end{enumerate}
\end{thm}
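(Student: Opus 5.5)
The plan is to follow the same three-step pattern as in Theorems \ref{thm: spectrum type B,D} and \ref{thm: spec Al 3 alpha delta sasaki}: identify the admissible tuples via Proposition \ref{prop: spherical real Stiefel}, evaluate the eigenvalue formula \eqref{eq:eigenvalue_formula} of Theorem \ref{thm:operational_branching} with $s=1$, and multiply the branching multiplicity (which is one) by the dimensions of the $\SO(m)$- and $\SO(2)$-representations.

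\textbf{Step 1: the Casimir eigenvalues.} For $\SO(m)$ we reuse the computation in the proof of Theorem \ref{thm: spectrum type B,D}. There, with respect to $\langle A,B\rangle=-\tr(AB)$, one has $\langle\lambda_i,\lambda_j\rangle=\delta_{ij}/2$, and the Freudenthal expression for highest weight $(z_1,z_2,0,\dots,0)$ equals $\tfrac12\bigl(z_1(z_1+m-2)+z_2(z_2+m-4)\bigr)$ for both parities of $m$. Now the background metric is $Q=-\tfrac12\tr$, which is half the trace form. The dual inner product on $\t^*$ is therefore doubled, and $c_Q(\varrho)=z_1(z_1+m-2)+z_2(z_2+m-4)$. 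For the fiber $\so(2)$, spanned by $E$ equal to the rotation generator in the first $2\times2$ block, we have $Q(E,E)=1$. Hence the weight $z_{n+1}$ has Casimir $z_{n+1}^2$, and $\lambda_1$ restricts to the fiber functional. At this point I would carefully recheck the normalization. The stated formula carries an overall factor $\tfrac{1}{2t_0}$ and fiber contribution $\tfrac{1}{2t_1}z_{n+1}^2$, so the weights must be paired with $Q$ such that $\|\lambda_1\|^2_Q=\tfrac12$. That is, the dual of $Q=-\tfrac12\tr$ on $\t^*$ must be computed consistently with the convention of Theorem \ref{thm: spectrum type B,D}. I expect to fix the constant by checking the case $m=3$: there $\SO(3)$ with $Q$ is the round sphere of some radius, and the Laplacian on $\SO(3)=\R P^3$ gives the known values.

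\textbf{Step 2: assembling $\eta$.} Substituting into \eqref{eq:eigenvalue_formula} gives $\eta=\bigl(c_Q(\varrho_G)-c(\varrho_{n+1})\bigr)/t_0+c(\varrho_{n+1})/t_1$, which is the stated expression once the factor $\tfrac12$ is in place. Since the fiber representation is that of $\SO(2)$, its dual $\varrho(-z_{n+1})$ has the same Casimir. The sign convention of Proposition \ref{prop: spherical real Stiefel} is therefore harmless: the admissible conditions $|z_{n+1}|\le z_1-|z_2|$ and $z_{n+1}\equiv z_1-z_2 \pmod 2$ are symmetric under $z_{n+1}\mapsto -z_{n+1}$. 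For $m=4$ one keeps $z_2\in\Z$ with $z_1\ge|z_2|$, since this is the $D_2$ dominance condition. The Freudenthal computation goes through unchanged for negative $z_2$, because only $z_2(z_2+m-4)=z_2^2$ appears.

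\textbf{Step 3: multiplicities.} The branching multiplicity is one and $\dim\varrho_{\SO(2)}=1$, so the multiplicity equals $\dim\varrho_{\SO(m)}(z_1,z_2)$. For $m\ge5$ this is exactly the unified dimension formula already established in the proof of Theorem \ref{thm: spectrum type B,D}, with the factor $(z_{n+1}+1)$ dropped. For $m=3$ the dimension is $2z_1+1$. For $m=4$, Lemma \ref{lem: SO(4) vs SU(2)SU(2)} identifies $\varrho_{\SO(4)}(z_1,z_2)$ with $\varrho(z_1+z_2)\otimes\varrho(z_1-z_2)$, which has dimension $(z_1+z_2+1)(z_1-z_2+1)$.

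The main obstacle is purely the normalization bookkeeping in Step 1. This means matching the scale of $Q=-\tfrac12\tr$ against the inner product on $\t^*$, both for $\SO(m)$ and for the fiber $\so(2)$ sitting in the first block. Once that constant is pinned down, which I would do via the $m=3$ sanity check, everything else is a direct invocation of earlier results.
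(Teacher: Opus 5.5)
Your three steps are the paper's proof: Theorem \ref{thm:operational_branching} with $s=1$, admissibility from Proposition \ref{prop: spherical real Stiefel}, and the Casimir and dimension formulas taken from the proof of Theorem \ref{thm: spectrum type B,D}. Your separate handling of the dimensions for $m=3,4$ is, if anything, more complete, since the formulas listed there only cover $m\ge5$.

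The step that fails is the one where you reconcile the constant, and your Step~1 is the part to trust. For $Q=-\tfrac12\tr$, the metric dual of $\lambda_i$ is $2E_i$ (with $E_i$ as in that proof), which has $Q$-norm $1$. Hence $\|\lambda_i\|_Q^2=1$, $c_Q(\varrho_{\SO(m)})=z_1(z_1+m-2)+z_2(z_2+m-4)$, and the fiber Casimir is $z_{n+1}^2$. Nothing is left to adjust so that $\|\lambda_1\|_Q^2=\tfrac12$. The $m=3$ check you propose confirms your value rather than the stated one: $(\SO(3),-\tfrac12\tr)$ is the round $3$-sphere of radius $2$ modulo $\pm1$, with eigenvalues $j(j+1)=z_1(z_1+1)$, not $\tfrac12 z_1(z_1+1)$.

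So ``the stated expression once the factor $\tfrac12$ is in place'' is not a derivation. What your argument actually proves is
\[
\eta=\frac{1}{t_0}\bigl(z_1(z_1+m-2)+z_2(z_2+m-4)-z_{n+1}^2\bigr)+\frac{z_{n+1}^2}{t_1}.
\]
The displayed formula is therefore correct for $Q=-\tr$ and off by a factor $2$ for $Q=-\tfrac12\tr$. The paper's proof makes exactly this slip. It quotes $\tfrac12(\dots)$ from Theorem \ref{thm: spectrum type B,D}, where it was computed for $\langle A,B\rangle=-\tr(AB)$, as the Casimir for $-\tfrac12\tr$.

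This matters downstream. The Einstein constant in Proposition \ref{prop: Einstein real Stiefel} is the one for $Q=-\tfrac12\tr$ (e.g.\ $\Lambda=1/(2t_0)$ for $m=3$). With the corrected eigenvalues, the Einstein metric on $V_2(\mathbb{R}^4)$ has $\eta_1=\tfrac{11}{4t_0}>\tfrac{8}{3t_0}=2\Lambda$.

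Doing your $m=3$ check properly also exposes a second defect, which you inherit by taking the admissible set from Proposition \ref{prop: spherical real Stiefel} on faith. For $m=3$, the parity condition $z_{n+1}\equiv z_1\pmod 2$ is false. Proposition \ref{prop: branching GoWa}(b) with $n=1$ only requires $|x_1|\le z_1$ and the same integrality type. So $\varrho_{\SO(3)}(z_1)\myrestriction_{\SO(2)}$ contains every integer weight in $[-z_1,z_1]$; already $\mathbb{C}^3$ has weights $1,0,-1$.

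Concretely, $g\mapsto g_{33}$ is right-$\SO(2)$-invariant and lies in the $z_1=1$ isotypic component, so $(z_1,z_{n+1})=(1,0)$ is admissible. At $t_0=t_1$, Peter--Weyl gives total multiplicity $(2z_1+1)^2$ for the eigenvalue indexed by $z_1$, while the stated conditions give only $(z_1+1)(2z_1+1)$.

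For $m\ge4$ the step-two parity condition is right. For $m=4$ you can verify it directly from Lemma \ref{lem: SO(4) vs SU(2)SU(2)}. The weights $(w,0)$ of $\varrho(z_1+z_2)\otimes\varrho(z_1-z_2)$ are exactly those with $|w|\le z_1-|z_2|$ and $w\equiv z_1-z_2\pmod 2$, each with multiplicity one.
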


\begin{proof}
    By Theorem \ref{thm:operational_branching}, the eigenvalue associated with an admissible representation $\varrho_{\SO(m)} \otimes \varrho_{\SO(2)}$ is obtained by substituting the respective Casimir constants into the general formula for the canonical variation:
    \[
        \eta = \frac{c_Q(\varrho_{\SO(m)}) - c_{Q|_\h}(\varrho_{\SO(2)})}{t_0} + \frac{c_{Q|_\h}(\varrho_{\SO(2)})}{t_1}.
    \]
    As computed in the proof of Theorem \ref{thm: spectrum type B,D}, the Casimir constant for the $\SO(m)$-representation with highest weight $(z_1, z_2, 0, \dots, 0)$ evaluated with respect to the bi-invariant metric $Q(X,Y) = -\frac{1}{2}\tr(XY)$ is given by
    \[
        c_Q(\varrho_{\SO(m)}) = \frac{1}{2} \big( z_1(z_1 + m - 2) + z_2(z_2 + m - 4) \big).
    \]
     The Casimir eigenvalue on the vertical subspace is simply $c_{Q|_{\so(2)}}(\varrho_{\SO(2)}) = \frac{1}{2} z_{n+1}^2$. Substituting these two constants into the eigenvalue equation directly yields the stated formula for $\eta(z_1, z_2, z_{n+1})$.

    To determine the multiplicity of each eigenvalue, we recall that the total multiplicity is the product of the branching multiplicity and the dimensions of the involved representations. By Proposition \ref{prop: spherical real Stiefel}, the branching is multiplicity-free. Furthermore, since $\SO(2)$ is Abelian, the fiber representation $\varrho_{\SO(2)}(z_{n+1})$ is one-dimensional. Consequently, the total multiplicity of the eigenvalue reduces entirely to the dimension of the base representation $\varrho_{\SO(m)}(z_1, z_2)$. These dimensions evaluate exactly to the explicit formulas previously derived in Theorem \ref{thm: spectrum type B,D}.
\end{proof}

\section{Geometric Consequences and Applications}
\label{sec:applications}
In this final section, we apply our spectral formulas to evaluate the linear stability of Perelman's $\nu$-entropy and to establish the algebraic conditions for Yamabe bifurcations.

\subsection{The Scalar Condition for Conformal Stability}
Throughout this subsection, we restrict to closed Einstein manifolds with positive Einstein constant $\Lambda >0$.
When studying the stability of the Einstein metric $g$, one considers infinitesimal metric variations (symmetric 2-tensors). For any compact Einstein manifold other than the round sphere, the classical Berger-Ebin decomposition (cf. \cite[Thm. 2.3]{SchwahnSemmelmann25}) orthogonally splits the tangent space of volume-preserving metrics into three components: pure conformal variations $f \cdot g$, trivial variations arising from diffeomorphisms (Lie derivatives), and transverse traceless $(TT)$ tensors. Because the Einstein-Hilbert action is invariant under diffeomorphisms, stability depends entirely on the conformal and $TT$ components.

Historically, different geometric problems have motivated different notions of stability (see Table \ref{tab:stability_notions} for a summary). Classical $\mathcal{S}$-stability focuses solely on the $TT$-tensors, analyzing the Lichnerowicz Laplacian $\Delta_L$. For the volume-normalized Einstein-Hilbert action however, any Einstein metric is a saddle point: the second variation is strictly positive definite for conformal variations and negative definite on $TT$-tensors (up to a finite coindex). In the Ricci-flow setting, Perelman's shrinker entropy $\nu$
\cite{Perelman02} (see also \cite{CHI04}) provides a more suitable
functional for stability questions, since its second variation is closely
related to dynamical stability and instability under the Ricci flow, see Table \ref{tab:stability_notions}. In this context, a dynamically stable metric acts as an attractor for the flow (see \cite{Kr15, Kr20}).

\begin{table}[ht]
    \centering
    \renewcommand{\arraystretch}{1.5}
    \caption{Overview of Stability Notions for Einstein Metrics 
    (cf. \cite{SchwahnSemmelmann25}). The strict inequalities \(>2\Lambda\) 
    give strict linear stability criteria for the corresponding functional, 
    while a strict inequality below the threshold, either in the \(TT\)-spectrum 
    or in the scalar spectrum, gives the corresponding linear instability. 
    Borderline cases with equality at \(2\Lambda\) require separate analysis.}
    \label{tab:stability_notions}
    \begin{tabularx}{\textwidth}{@{} >{\raggedright\arraybackslash}p{3.5cm} >{\raggedright\arraybackslash}p{3.5cm} X @{}}
        \toprule
        \textbf{Stability Notion} & \textbf{Governing Functional} & \textbf{Required Spectral Conditions} \\
        \midrule
        \textbf{Strict Linear $\mathcal{S}$-Stability} \newline (Einstein-Hilbert) 
        & Total Scalar Curvature $\mathcal{S}$ on $TT$-tensors 
        & $\lambda_L(M, g) > 2\Lambda$ 
        \newline (Lichnerowicz Laplacian on $TT$-tensors) \\
        \hline
        \textbf{Strict Linear $\nu$-Stability} \newline (Perelman \cite{Perelman02}) 
        & Shrinker Entropy $\nu$ on all variations 
        & $\lambda_L(M, g) > 2\Lambda$ \textbf{and} $\eta_1(M,g) > 2\Lambda$ 
        \newline (Requires both tensor and scalar stability) \\
        \hline
        \textbf{Dynamical Stability} 
        & Volume-Normalized Ricci Flow 
        & Strict $\nu$-stability implies dynamical stability. 
        \newline Moreover, strict $\mathcal{S}$-instability or $\eta_1(M,g)<2\Lambda$ 
        implies dynamical instability. \\
        \bottomrule
    \end{tabularx}
\end{table}

The strict linear $\nu$-stability (and thus dynamical stability) of an Einstein metric depends on two independent spectral conditions: the scalar condition $\eta_1(M, g) > 2\Lambda$ and the tensor condition $\lambda_L(M, g) > 2\Lambda$, where $\Lambda$ is the Einstein constant and $\eta_1$ is the first positive eigenvalue of the Laplace-Beltrami operator on functions \cite{CaoHe15}. 

For the context of this paper, we say an Einstein metric is \textit{strictly scalar-stable} if $\eta_1 > 2\Lambda$, \textit{neutrally scalar-stable} if $\eta_1 = 2\Lambda$, and \textit{strictly scalar-unstable} if $\eta_1 < 2\Lambda$. For any closed Einstein manifold not isometric to the round sphere, being strictly scalar-unstable mathematically guarantees that the metric admits destabilizing conformal directions, making it conformally $\nu$-unstable \cite{CHI04, CaoHe15}. By the work of Kr\"oncke \cite{Kr20}, this scalar eigenvalue condition directly determines the dynamical behavior of the metric: if $\eta_1 < 2\Lambda$, the Einstein metric is strictly dynamically unstable under the volume-normalized Ricci flow. Conversely, proving $\eta_1 > 2\Lambda$ satisfies the scalar condition, meaning any potential instability must be of purely tensorial nature. The degenerate situations $\lambda_L(M, g) = 2\Lambda$ or $\eta_1(M,g) = 2\Lambda$ are more complicated. For a more detailed description of the stability of Einstein metrics see \cite{{SchwahnSemmelmann25}}.

While controlling the Lichnerowicz Laplacian on tensors remains difficult, our results determine the scalar condition. Recently, Nagy and Semmelmann \cite{Semmelmann Nagy} provided general lower bounds for the first eigenvalue of the Laplace-Beltrami operator and the associated horizontal Laplacian on canonical variations of 3-Sasaki manifolds. We extract the first eigenvalue $\eta_1$ for all classical series:

\begin{cor}[First Eigenvalues of 3-$(\alpha,\delta)$-Sasaki Manifolds]\label{thm:lambda_1_sasaki}
    For any parameter choice $\alpha, \delta > 0$, the first positive eigenvalue $\eta_1$ of the Laplace-Beltrami operator is exactly given by:
    \begin{enumerate}
        \item \textit{Type A} $(\SU(n+1)/S(\U(n-1)\times \U(1))$, $n \ge 2$, cf. Theorem \ref{thm: spec Al 3 alpha delta sasaki}):
        \[
            \eta_1(\alpha,\delta) = \min \big\{ 8\alpha\delta(n+1), \,\, 8\delta\big(\alpha(n-1) + \delta\big) \big\}.
        \]
        \item \textit{Type B/D} $(\SO(m)/(\SO(m-4)\times \SU(2))$, $m \ge 5$, cf. Theorem \ref{thm: spectrum type B,D}):
        \[
            \eta_1(\alpha,\delta) = \min \big\{ 4\alpha\delta(m-1), \,\, 8\delta\big(\alpha(m-4) + \delta\big) \big\}.
        \]
        \item \textit{Type C: Sphere} $(S^{4n-1} \cong \Sp(n)/\Sp(n-1)$, $n \ge 2)$:
        \[
            \eta_1(\alpha,\delta) = \min \big\{ 8\alpha\delta n, \,\, 4\alpha\delta(n-1) + 3\delta^2 \big\}.
        \]
        \item \textit{Type C: Projective Space} $(\mathbb{R}P^{4n-1} \cong \Sp(n)/(\Sp(n-1)\times \Z_2)$, $n \ge 2)$:
        \[
             \eta_1(\alpha,\delta)  = \min \big\{ 8\alpha\delta n, \,\, 8\delta\big(\alpha(n-1) + \delta\big) \big\}.
        \]
    \end{enumerate}
\end{cor}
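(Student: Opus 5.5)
We minimize the explicit eigenvalue formulas of Theorems \ref{thm: spec Al 3 alpha delta sasaki}, \ref{thm: spectrum type B,D} and \ref{thm: spectrum 3 sasaki type C} over the nontrivial admissible weight tuples. The key structural observation is that, by \eqref{eq:sasaki_spectrum_formula}, every eigenvalue has the form
\[
\eta = 2\alpha\delta\,\bigl(c_Q(\varrho_G)-c(\varrho_H)\bigr) + \delta^2 c(\varrho_H),
\]
and both brackets are nonnegative. The first bracket is the horizontal Casimir contribution. It is nonnegative since $c(\varrho_H)\le c_Q(\varrho_G)$ whenever $\varrho_H^*$ occurs in $\varrho_G\myrestriction_L$, because the $\h$-Casimir is dominated by the full Casimir on each $L$-isotypic piece. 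Hence for $\alpha,\delta>0$, $\eta$ is increasing in both the horizontal Casimir and the fiber Casimir. I would therefore split the admissible tuples into two classes. The first class has trivial fiber weight ($z_{n+1}=0$, i.e.\ basic functions pulled back from the Wolf space); there $\eta=2\alpha\delta\,c_Q(\varrho_G)$, and one minimizes over the admissible $\varrho_G$. The second class has fiber weight nonzero; there one minimizes over the smallest allowed fiber weight together with the smallest compatible $\varrho_G$. Monotonicity guarantees that $\eta_1$ is the minimum of these two candidates, and it suffices to show that no other tuple beats both.

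Type A. With $z_{n+1}=0$, Proposition \ref{prop: Al spherical reps} forces $z_1-z_2-z_n=0$ together with $z_1+z_2+z_n=4z_i$. The smallest nontrivial solution is the adjoint-type weight $(2,1,\dots,1,0)$ for $n\ge4$ (resp.\ $(2,1,1)$ for $n=3$, $(1,1)$-type for $n=2$ after the congruence). Its Casimir, evaluated with the $n\ge3$ formula, gives $8\alpha\delta(n+1)$. With $z_{n+1}=2$ (the minimal even value), the smallest admissible weight is $(1,1,\dots,1,0)$-type, i.e.\ $z_1=z_2=1$, $z_n=0$ in the relevant parity class. It yields horizontal part $8\alpha\delta(n-1)$ plus fiber part $\delta^2\cdot 8$, i.e.\ $8\delta(\alpha(n-1)+\delta)$. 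For $n=2$ I would check the same two candidates directly from the case (1) inequalities.

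Types B/D and C. For Types B/D, Theorem \ref{thm: spectrum type B,D} gives $\eta=4\alpha\delta(z_1(z_1+m-2)+z_2(z_2+m-4))+\delta(\delta-2\alpha)z_{n+1}(z_{n+1}+2)$. With $z_{n+1}=0$ the minimum is at $(z_1,z_2)=(1,0)$, giving $4\alpha\delta(m-1)$. With $z_{n+1}=2$, we need $z_2\ge1$, so the minimum is at $(1,1)$; substituting gives $4\alpha\delta(2m-4)+8\delta(\delta-2\alpha)=8\delta(\alpha(m-4)+\delta)$. For $m=5$ the tuple $(1,1,2)$ is also admissible, so the same formula holds. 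For Type C, with $l=z_1-z_2$ and the formula of Theorem \ref{thm: spectrum 3 sasaki type C}: $l=0$ gives the minimum at $z_1=z_2=1$, value $2\alpha\delta(4n)=8\alpha\delta n$. For the sphere, $l=1$ at $(1,0)$ gives $4\alpha\delta(n-1)+3\delta^2$. For $\mathbb{R}P^{4n-1}$ parity forces $l\ge2$, and $(2,0)$ gives $8\alpha\delta(n-1)+8\delta^2$.

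Main obstacle. The delicate point is showing that no other tuple undercuts these two candidates. Since $\delta-2\alpha$ may be negative, the formula in the stated form is not monotone in $z_{n+1}$. I would handle this by rewriting $\eta$ in the two-bracket form above and proving that the horizontal bracket $c_Q-c_H$ is itself nondecreasing as the weights grow within each fiber class. Concretely, I would check that increasing $z_{n+1}$ by $2$ forces $z_1,z_2$ up enough that $c_Q-c_H$ does not decrease; for Type B/D this follows from $z_2\ge z_{n+1}/2$. For Type A this requires care with the constraint $|z_1-z_2-z_n|\le z_{n+1}$ together with $z_1+z_2+z_n\equiv0\pmod 4$. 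I would also have to verify, case by case, that the small tuples used above actually satisfy the admissibility conditions of Proposition \ref{prop: Al spherical reps}, especially for $n=2,3$.
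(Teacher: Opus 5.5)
Your framework matches the paper's: a basic candidate with $z_{n+1}=0$ and a mixed candidate with minimal fiber weight. Your rewriting $\eta=2\alpha\delta\,h+\delta^2 v$ with $h=c_Q-c_H\ge 0$ and $v=c_H\ge 0$ improves on the paper, which only asserts that every other admissible tuple gives a larger value. Your Type B/D and Type C computations are correct.

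The Type A part, however, rests on wrong witnesses. Your mixed tuple $z_1=z_2=1$, $z_n=0$, $z_{n+1}=2$ is not admissible in Proposition \ref{prop: Al spherical reps}. First, $z_1+z_2+z_n=2$ cannot equal $4z_3$ (for $n=3$, $2\not\equiv 0 \pmod 4$). Second, $z_{n+1}=2>z_1-z_2+z_n=0$. Nor does it carry the value you attach to it. Write $\eta=4\alpha\delta\,C+\delta(\delta-2\alpha)v$ with $v=z_{n+1}(z_{n+1}+2)$ and $h=2C-v$. The Casimir formula from the proof of Theorem \ref{thm: spec Al 3 alpha delta sasaki} gives $C=2(n-1)(n+2)/(n+1)$ for this weight, hence $h=4(n-1)(n+2)/(n+1)-8<4(n-1)$. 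Were it admissible, it would undercut the claimed $\eta_1$. So the attainment of $8\delta(\alpha(n-1)+\delta)$ is not established.

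The missing observation, which the paper uses, is that one representation realizes both candidates: the adjoint weight $z_1=2$, $z_2=\dots=z_n=1$ (for $n=2$: $(z_1,z_2)=(2,1)$), with $C=2(n+1)$.
\begin{itemize}
\item It is spherical for $z_{n+1}=0$ and also for $z_{n+1}=2$, since $|z_1-z_2-z_n|=0\le 2\le z_1-z_2+z_n=2$ (resp.\ $|z_1-2z_2|=0\le 3\le\min(2z_1-z_2,z_1+z_2)=3$ for $n=2$).
\item For $z_{n+1}=2$ it gives $h=4(n+1)-8=4(n-1)$ and $v=8$.
\end{itemize}
Your basic weights are likewise wrong in the paper's convention. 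The weight $(2,1,\dots,1,0)$ violates $z_1+z_2+z_n=4z_3$ (one needs $z_n=1$). For $n=2$, $(1,1)$ violates $z_1+z_2\equiv 0\pmod 3$. Only the value $8\alpha\delta(n+1)$, which is that of the adjoint, is right.

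The obstacle you flag can be closed by an explicit bound. For $n\ge 3$ put $k=(z_1+z_2+z_n)/4$ and $z_{n+1}=2j$. From $z_2\ge k\ge z_n$ and $2j\le z_1-z_2+z_n=4k-2z_2$ you get $k\ge j$ and $z_1\ge k+j$. Write $z_1=k+a$, $z_2=k+b$, $z_n=k-c$ with $a,b,c\ge 0$ and $a+b-c=k$. The bracket of Theorem \ref{thm: spec Al 3 alpha delta sasaki} becomes
\[
C=k^2+a^2+b^2+c^2+n(k+a)+(n-2)(b+c).
\]
For $j\ge 1$ this gives $h=2C-4j(j+1)\ge 4(n-1)j\ge 4(n-1)$ and $v\ge 8$. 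For a nontrivial tuple with $j=0$ one has $a=k\ge 1$ and $h\ge 4(n+1)$. For $n=2$, put $z_1+z_2=3k$ and $q=z_1-k$. The inequality $3z_3/2\le\min(2z_1-z_2,z_1+z_2)$ gives $q,k\ge j$, and $C=k^2+q^2+(k-q)^2+2k+2q$ yields the same two bounds. Combined with your monotonicity in $(h,v)$, this proves the Type A formula.
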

\begin{proof}
    For Type A: To minimize the basic eigenvalue $(z_{n+1}=0)$, we choose the minimal coupling $k=1$, forcing $\varrho_G$ to have highest weight $(2,1,\dots,1)$. This yields $\eta_{\mathrm{basic}} = 8\alpha\delta(n+1)$ (this basic eigenvalue explicitly attains the lower bound $\eta_1^B \ge 2\alpha\delta \cdot 4(n+1)$ given in \cite[Cor. 3.10]{Semmelmann Nagy}). Using the same $\varrho_G$ but coupling to the minimal non-trivial fiber representation $\varrho_H(2)$ satisfies all interlacing inequalities and yields $\eta_{\mathrm{mixed}} = 8\alpha\delta(n-1) + 8\delta^2$. Any other valid representation yields strictly larger values.
    
    For Type B/D: For the basic eigenvalue, the minimum occurs at $\varrho_G(1,0,\dots,0)$, yielding $\eta_{\mathrm{basic}} = 4\alpha\delta(m-1)$. For the mixed eigenvalue $(\varrho_H(2))$, the constraint $z_2 \ge z_{n+1}/2$ forces $z_2 \ge 1$. The minimum occurs at $\varrho_G(1,1,0,\dots,0)$, yielding $\eta_{\mathrm{mixed}} = 4\alpha\delta(2m-4) + 8\delta(\delta-2\alpha) = 8\delta(\alpha(m-4) + \delta)$.
    
   For Type C (sphere): The basic eigenvalue is minimized by $\varrho_G(1,1,0,\dots,0)$, yielding $\eta_{\mathrm{basic}} = 8\alpha\delta n$. The mixed eigenvalue is minimized at $\varrho_G(1,0,\dots,0)$ (coupling to $\varrho_H(1))$, yielding $\eta_{\mathrm{mixed}} = 4\alpha\delta(n-1) + 3\delta^2$.
    
    For Type C (projective space): The basic representation $\varrho_G(1,1,0,\dots,0)$ satisfies the parity constraint $z_1 \equiv z_2 \pmod 2$ and remains the minimum, yielding $\eta_{\mathrm{basic}} = 8\alpha\delta n$. However, the minimal mixed representation $\varrho_G(1,0,\dots,0)$ is forbidden by the parity constraint. The next minimal admissible mixed representation is $\varrho_G(2,0,\dots,0)$ (coupling to $\varrho_H(2))$, which evaluates to $\eta_{\mathrm{mixed}} = 8\delta(\alpha(n-1) + \delta)$.
\end{proof}

Using Theorem \ref{thm:lambda_1_sasaki}, we evaluate the scalar stability for the two unique Einstein metrics (cf. Proposition \ref{prop:sasaki_einstein}) across all families.

\begin{cor}[Scalar Stability of 3-Sasaki Einstein Metrics]
    For the Einstein metrics on the 3-$(\alpha,\delta)$-Sasaki families, the following holds:
    \begin{enumerate}
        \item \textit{Type A:} Both the 3-$\alpha$-Sasaki metric $(\delta=\alpha)$ and the squashed Einstein metric $(\delta=(2n+1)\alpha)$ are strictly scalar-stable $(\eta_1 > 2\Lambda)$ for all $n \ge 2$.
        \item \textit{Type B/D:} The 3-$\alpha$-Sasaki metric is strictly scalar-stable for $m=5$, neutrally scalar-stable $(\eta_1 = 2\Lambda)$ for $m=6$, and strictly scalar-unstable $(\eta_1 < 2\Lambda)$ for $m \ge 7$. The squashed Einstein metric $(\delta=(2m-5)\alpha)$ is strictly scalar-unstable, making it conformally $\nu$-unstable for all $m \ge 5$.
        \item \textit{Type C:} The $3$-$\alpha$-Sasaki metric $(\delta=\alpha)$ on the sphere $S^{4n-1}$ is the round sphere (up to scaling) and therefore satisfies the classical inequality $\eta_1 < 2\Lambda$ (cf. Remark \ref{rem:sphere}). Descending to the projective space $\mathbb{R}P^{4n-1}$ geometrically eliminates these eigenfunctions (they are odd under the antipodal map), so that the $3$-$\alpha$-Sasaki metric on $\mathbb{R}P^{4n-1}$ is strictly scalar-stable $(\eta_1 > 2\Lambda)$. The squashed Einstein metric $(\delta=(2n+1)\alpha)$ remains strictly scalar-unstable for both topologies.
    \end{enumerate}
\end{cor}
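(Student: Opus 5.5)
The plan is a direct substitution: insert the two Einstein parameter choices from Proposition \ref{prop:sasaki_einstein} into the first-eigenvalue formulas of Corollary \ref{thm:lambda_1_sasaki}, then compare the resulting $\eta_1$ with $2\Lambda$. Since $\alpha,\delta>0$, every quantity is a positive multiple of $\alpha^2$, so each comparison reduces to an inequality between polynomials in $n$ or $m$.

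The first step is to fix the quaternionic dimension $d$ for each family, as computed in the proofs of the spectral theorems:
\begin{itemize}
\item Type A: $d=n-1$.
\item Type B/D: $d=m-4$.
\item Type C: $d=n-1$.
\end{itemize}
Hence the squashed parameter $\delta=(2d+3)\alpha$ equals $(2n+1)\alpha$, $(2m-5)\alpha$ and $(2n+1)\alpha$ respectively, which agrees with the statement. The thresholds then follow from Proposition \ref{prop:sasaki_einstein}:
\begin{itemize}
\item $2\Lambda_1=4\alpha^2(2d+1)$, giving $4\alpha^2(2n-1)$ for Types A and C and $4\alpha^2(2m-7)$ for Type B/D.
\item $2\Lambda_2=4\alpha^2(4d^2+14d+9)$, giving $4\alpha^2(4n^2+6n-1)$ for Types A and C and $4\alpha^2(4m^2-18m+17)$ for Type B/D.
\end{itemize}

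Next, for each family I evaluate both entries of the minimum in Corollary \ref{thm:lambda_1_sasaki}.

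\textbf{Type A.} For $\delta=\alpha$, the minimum is $\min\{8(n+1),8n\}\alpha^2=8n\alpha^2$. This exceeds $(8n-4)\alpha^2$. For $\delta=(2n+1)\alpha$, the basic entry $8(2n+1)(n+1)\alpha^2$ is smaller than the mixed entry $24n(2n+1)\alpha^2$. It equals $(16n^2+24n+8)\alpha^2$, which exceeds $2\Lambda_2=(16n^2+24n-4)\alpha^2$. Both metrics are therefore strictly scalar-stable.

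\textbf{Type B/D, $\delta=\alpha$.} The mixed entry is $8(m-3)\alpha^2$, which always exceeds $4(2m-7)\alpha^2$. So the basic entry $4(m-1)\alpha^2$ decides the outcome. Comparing $m-1$ with $2m-7$ gives strict stability for $m=5$, equality for $m=6$, and instability for $m\ge7$.

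\textbf{Type B/D, squashed.} It suffices that the basic entry alone lies below the threshold. We have
\[
(2m-5)(m-1)-(4m^2-18m+17)=-(2m^2-11m+12)=-(2m-3)(m-4)<0
\]
for $m\ge5$. Hence the basic entry is below $2\Lambda_2$ and the metric is unstable.

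\textbf{Type C.} For $\delta=\alpha$ on the sphere, the minimum is $\min\{8n,4n-1\}\alpha^2=(4n-1)\alpha^2<(8n-4)\alpha^2$. This is the round-sphere phenomenon of Remark \ref{rem:sphere}. On $\mathbb{R}P^{4n-1}$ both entries equal $8n\alpha^2$, and $8n\alpha^2>(8n-4)\alpha^2$, so this metric is strictly stable. For the squashed metric, the common basic entry is $8n(2n+1)\alpha^2=(16n^2+8n)\alpha^2$. This is below $(16n^2+24n-4)\alpha^2$, because the difference is $16n-4>0$. The squashed metric is therefore unstable on both spaces.

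Finally, strict scalar instability yields conformal $\nu$-instability via \cite{CaoHe15}, as discussed before Table \ref{tab:stability_notions}. For Type C this requires the metric not to be the round sphere, and the squashed metric is not round.

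I expect no conceptual obstacle here; the difficult part is bookkeeping. The main points needing care are:
\begin{itemize}
\item using the correct $d$ in $\Lambda$ for each family;
\item in the stable cases, checking both entries of the minimum;
\item in the unstable cases, noting that one entry below $2\Lambda$ is enough.
\end{itemize}
The borderline case $m=6$ is an exact equality $20\alpha^2=20\alpha^2$, so I would double-check it separately against Theorem \ref{thm: spectrum type B,D} at $(z_1,z_2,z_{n+1})=(1,0,0)$.
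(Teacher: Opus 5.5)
You follow the paper's route: insert the two Einstein parameters into Corollary~\ref{thm:lambda_1_sasaki} and compare with $2\Lambda$. Your values of $d$, $2\Lambda_1$ and $2\Lambda_2$, and every comparison you make, are arithmetically correct relative to that corollary. You even supply the $\delta=\alpha$ comparisons for Types A and B/D that the paper leaves implicit, and nothing needs to change for Types A and C.

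\textbf{The gap: Type B/D for $m\ge 6$.} There is a genuine gap, which you inherit from the paper, in the Type B/D input. The basic entry $4\alpha\delta(m-1)$ comes from $\varrho_{\SO(m)}(1,0,\dots,0)=\mathbb{C}^m$. Under $\SO(m-4)\times\SO(4)$ this splits as $\mathbb{C}^{m-4}\oplus\mathbb{C}^4$, which has no fixed vector once $m-4\ge 2$. So $4\alpha\delta(m-1)$ is not an eigenvalue at all.

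The error sits in Proposition~\ref{prop: bl dl spherical reps}. Its induction along $\SO(m)\supset\SO(m-1)\supset\dots\supset\SO(4)$ counts only $\SO(4)$-isotypic multiplicities and never imposes $\SO(m-4)$-invariance. For this weight it returns $m-4=\dim\mathbb{C}^{m-4}$ instead of $0$. Its necessary condition $z_3=0$ also fails: $\Lambda^3\mathbb{C}^7=\varrho_{\SO(7)}(1,1,1)$ contains the trivial $\SO(3)\times\SO(4)$-type. Your planned check of $(1,0,0)$ against Theorem~\ref{thm: spectrum type B,D} would not catch this, because that theorem already lists $(1,0,0)$ as admissible. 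Testing sphericality directly does catch it.

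\textbf{Internal inconsistency at $m=6$.} The statement contradicts itself here. For $m=6$ the manifold is the Type A space with $n=3$, via $\SO(6)=\SU(4)/\{\pm1\}$; both have $d=2$. There, $\Lambda^2\mathbb{C}^4\cong\mathbb{C}^6$ violates $z_1+z_2+z_3\equiv 0 \pmod 4$, so the basic entry is $32\alpha\delta$ rather than $20\alpha\delta$. Item (1) then declares both Einstein metrics strictly scalar-stable, while item (2) declares them neutral and unstable.

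\textbf{What the corrected input gives.} Once $(1,0,\dots,0)$ is discarded, every other nonzero $\SO(m)$-weight satisfies $\sum_i z_i(z_i+m-2i)\ge 2m-4$, so basic eigenvalues are $\ge 8(m-2)\alpha\delta$. The interlacing bound $z_1\ge z_2\ge z_{n+1}/2$ is correct and gives mixed eigenvalues $\ge 8\delta(\alpha(m-4)+\delta)$, attained at $(1,1,0,\dots,0)$ with $z_{n+1}=2$. Hence, for every $m\ge 6$:
\begin{itemize}
\item at $\delta=\alpha$, $\eta_1=8(m-3)\alpha^2=2\Lambda_1+4\alpha^2$;
\item at $\delta=(2m-5)\alpha$, $\eta_1\ge 8(m-2)(2m-5)\alpha^2=2\Lambda_2+12\alpha^2$.
\end{itemize}
So both Einstein metrics are strictly scalar-stable for $m\ge 6$. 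Only the $m=5$ assertions of item (2) survive; that case is $\mathbb{R}P^7$ and agrees with Type C. Item (2) cannot be proved as stated, and any correct argument must first replace Corollary~\ref{thm:lambda_1_sasaki}(2) by the corrected first eigenvalue.
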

\begin{proof}
    We explicitly compare $\eta_1$ against $2\Lambda$. For Type A squashed, $\delta > 2\alpha$, so the basic eigenvalue dominates, yielding $\eta_1 = 8\alpha^2(2n^2+3n+1)$. The threshold is $2\Lambda_2 = 4\alpha^2(4n^2+6n-1)$, yielding $\eta_1 - 2\Lambda_2 = 12\alpha^2 > 0$. 
    For Type B/D squashed, $\delta \ge 5\alpha > 2\alpha$, giving $\eta_1 = 4\alpha^2(2m^2-7m+5)$. The threshold is $2\Lambda_2 = 4\alpha^2(4m^2-18m+17)$, yielding $\eta_1 - 2\Lambda_2 = 4\alpha^2(-2m^2+11m-12) < 0$ for all $m \ge 5$. 
    For Type C 3-$\alpha$-Sasaki (round sphere), $\eta_1 = (4n-1)\alpha^2 < (8n-4)\alpha^2 = 2\Lambda$. 
    
     In contrast, for $\mathbb{R}P^{4n-1}$, the first eigenvalue jumps to $\eta_1 = 8n\alpha^2$, which strictly dominates $2\Lambda = (8n-4)\alpha^2$, proving strict scalar stability. 
    For the squashed metric on either topology, the basic eigenvalue dominates: $\eta_1 = 8n(2n+1)\alpha^2$. The threshold is $2\Lambda_2 = 4(4n^2+6n-1)\alpha^2$. The difference is $\alpha^2(-16n+4) < 0$, yielding genuine $\nu$-instability for both spaces.
\end{proof}
\begin{rem}\label{rem:sphere}    We briefly clarify why the round sphere $(S^n, g_0)$ represents a unique geometric exception in this stability analysis. In general, an eigenvalue $\eta_1 < 2\Lambda$ implies that a conformal variation strictly increases the entropy, causing $\nu$-instability. For the round sphere, the first eigenvalue evaluates to $\eta_1 = \frac{n}{n-1}\Lambda < 2\Lambda$. 
    
    However, the first eigenfunctions on $S^n$ naturally generate non-isometric conformal transformations. Consequently, the space of pure conformal variations and the space of trivial variations (diffeomorphisms) intersect non-trivially \cite[Rem. 2.4]{SchwahnSemmelmann25}. Since the $\nu$-entropy is diffeomorphism-invariant, these specific conformal directions are neutral rather than destabilizing. Thus, the round sphere is in fact geometrically $\nu$-stable \cite[Ex. 2.2]{CHI04}.
    
    This phenomenon is strictly limited to the sphere. By the resolution of the Lichnerowicz conjecture \cite{Ferrand96, Obata62}, the conformal transformation group of any compact Riemannian manifold not conformally equivalent to $S^n$ is inessential, meaning it reduces to the isometry group. Therefore, for the geometric families studied in this paper, the relevant variation spaces intersect trivially. Thus, scalar instability $(\eta_1 < 2\Lambda)$ guarantees genuine $\nu$-instability.
\end{rem}

Next, we state the first eigenvalue for the Stiefel manifolds.

\begin{cor}[First Eigenvalues of Stiefel Manifolds]\label{cor:lambda_1_stiefel}
    The first positive eigenvalue of the Laplace-Beltrami operator is exactly given by:
    \begin{enumerate}
       \item \textit{Real Stiefel} $(V_2(\mathbb{R}^m)$, canonical variation $g_{t_0, t_1})$:
        \begin{align*}
            \eta_1(t_0, t_1) &= \min \left\{ \frac{3}{t_0}, \,\, \frac{1}{2t_0} + \frac{1}{2t_1} \right\} \quad \text{for } m=3, \\
            \eta_1(t_0, t_1) &= \min \left\{ \frac{m-2}{t_0}, \,\, \frac{m-2}{2t_0} + \frac{1}{2t_1} \right\} \quad \text{for } m \ge 4.
        \end{align*}
        \item \textit{Complex Stiefel} $(V_2(\mathbb{C}^{n+1})$ for $n \ge 2$, Jensen type metric $g_{t_0, t_1, t_2})$:
        \begin{align*}
           &\eta_1(t_0, t_1, t_2) \\
            &= \min \left\{ \frac{2(n-1)(n+2)}{t_0(n+1)}, \,\, \frac{1}{t_0} \left( \frac{n^2+2n}{n+1} - \frac{3}{2} - \frac{n+1}{2(n-1)} \right) + \frac{3}{2t_1} + \frac{n+1}{2(n-1)t_2} \right\}. 
        \end{align*}
    \end{enumerate}
\end{cor}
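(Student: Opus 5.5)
The plan is to read off $\eta_1$ directly from the explicit spectra in Theorems \ref{thm: spectrum real stiefel} and \ref{thm: spectrum complex stiefel} by minimizing over the admissible weights. Following the pattern of the proof of Corollary \ref{thm:lambda_1_sasaki}, I split into two classes. The \emph{basic} eigenvalues have trivial fiber weight ($z_{n+1}=0$, resp. $z_{n+1}=z_{n+2}=0$) and carry only a $1/t_0$ term. The \emph{mixed} eigenvalues have non-trivial fiber weight. In each class I locate the smallest admissible weight and then argue that every other admissible weight gives a value at least as large, for all $t_0,t_1,t_2>0$. The overall minimum is then the minimum of the two candidates.

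\textbf{Real case.} For $z_{n+1}=0$ the parity condition in Proposition \ref{prop: spherical real Stiefel} forces $z_1\equiv z_2 \pmod 2$, so $(1,0)$ is excluded.
\begin{itemize}
\item For $m\ge 4$, the smallest admissible weight is $(1,1)$ (with $|z_2|=1$ when $m=4$). It gives $\frac{1}{2t_0}\big((m-1)+(m-3)\big)=\frac{m-2}{t_0}$. For $m=3$ the convention $z_2=0$ forces $z_1=2$, giving $3/t_0$.
\item For the mixed class, $|z_{n+1}|=1$ is attained at $(z_1,z_2)=(1,0)$. It gives $\frac{m-2}{2t_0}+\frac{1}{2t_1}$, which is $\frac{1}{2t_0}+\frac{1}{2t_1}$ for $m=3$.
\end{itemize}
Minimality: the Casimir $z_1(z_1+m-2)+z_2(z_2+m-4)$ is increasing in $z_1$ and $|z_2|$ on the dominant chamber, which settles the basic class. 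For $|z_{n+1}|=j\ge1$ the constraint $z_1\ge |z_2|+j$ gives
\[
z_1(z_1+m-2)-j^2\ \ge\ z_1(m-2)\ \ge\ m-2,
\]
while $j^2/(2t_1)\ge 1/(2t_1)$. Both pieces are therefore bounded below separately by the candidate's pieces.

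\textbf{Complex case.} I use Proposition \ref{prop:spherical complex Stiefel}.
\begin{itemize}
\item Basic class, $n\ge3$: with $z_{n+1}=z_{n+2}=0$ we have $y_1=y_2=0$. The inequalities then force $z_1=z_2=k$ and $z_n=0$, with $2k=2k$ automatically, so the weight is $(k^{n-1},0)$. The smallest non-trivial choice is $k=1$. Plugging it into \eqref{eq Casimir SU} gives
\[
3(n-1)-\frac{(n-1)^2}{n+1}=\frac{2(n-1)(n+2)}{n+1}.
\]
\item Mixed class: the parity condition and $z_{n+2}\le -z_{n+1}$ make $(z_{n+1},z_{n+2})=(1,-1)$ the smallest fiber weight. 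Then $y_1=1$, $y_2=0$, and $k=0$ is admissible with $z_1=1$ and $z_2=z_n=0$, i.e. the standard representation. Its Casimir is $\frac{n^2+2n}{n+1}$. Subtracting the fiber Casimirs $\frac32$ and $\frac{n+1}{2(n-1)}$ on the $1/t_0$ part and adding them back with $1/t_1$ and $1/t_2$ gives the second entry in the formula.
\item Case $n=2$: repeat both steps with the interlacing conditions of Proposition \ref{prop:spherical complex Stiefel}. The basic candidate is $(1,1)$ and the mixed candidate is $(1,0)$ with fiber weight $(1,-1)$; both agree with the general formula.
\end{itemize}

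\textbf{Main obstacle.} The hard step is the minimality argument in the complex mixed class. There the fiber weights $(z_{n+1},z_{n+2})$ can grow in two directions, and the $1/t_0$ coefficient $c_Q(\varrho_G)-c_H$ is not obviously monotone. I would first use that this coefficient is an eigenvalue of the horizontal Laplacian, hence non-negative. To get a lower bound tied to the fiber, I would use the constraint $\varrho_{\U(2)}(y_1,y_2)\subset \varrho_{\U(n+1)}(z)|_{\U(2)}$. This should let me show that $c_Q(\varrho_G)-c_H$ increases when $y_1$, $y_2$ or $k$ increase. Concretely, I would write it as a sum of squares in the variables $z_1-y_1$, $z_2-k$, $k-z_n$ using the constraint inequalities, and compare term by term with the candidate. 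Since the coefficients of $1/t_1$ and $1/t_2$ are also minimized at $(1,-1)$, each of the three terms is bounded below separately. This gives the bound uniformly in $t_0,t_1,t_2$.
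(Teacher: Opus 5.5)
Your real-case argument is correct relative to Theorem \ref{thm: spectrum real stiefel}, and it is more complete than the paper's proof. The paper only names the candidates $(1,1)$ and $(1,0)$; your estimate $z_1(z_1+m-2)-j^2\ge z_1(m-2)$ supplies the minimality step it omits. (One upstream caveat: the Casimir values used there are those for $Q=-\tr$, as computed in the proof of Theorem \ref{thm: spectrum type B,D}. With $Q=-\frac12\tr$ as declared in Section \ref{sec:Stiefel Manifolds}, all eigenvalues double.)

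In the complex case you reproduce the paper's proof with the same two candidates. The step you flag as the main obstacle is a genuine gap, and it cannot be closed as you propose. Your claim that the $1/t_1$- and $1/t_2$-coefficients are both minimized at $(z_{n+1},z_{n+2})=(1,-1)$ is false. The fibre weight $(0,-2)$ is admissible, e.g.\ with $z=(1,1,0,\dots,0)$ and $k=0$ (i.e.\ $\Lambda^2\mathbb{C}^{n+1}$; all conditions of Proposition \ref{prop:spherical complex Stiefel} hold), and its $1/t_1$-coefficient is $0$. Theorem \ref{thm: spectrum complex stiefel} assigns to it
\[
V=\frac{1}{t_0}\Big(\frac{2(n-1)(n+2)}{n+1}-\frac{2(n+1)}{n-1}\Big)+\frac{2(n+1)}{(n-1)t_2}.
\]
Hence $V$ minus the first entry equals $\frac{2(n+1)}{n-1}\big(\frac{1}{t_2}-\frac{1}{t_0}\big)$, which is negative whenever $t_2>t_0$. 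Meanwhile the second entry tends to $\infty$ as $t_1\to 0$. So even granting the theorem, the stated minimum is not $\eta_1$ uniformly in $(t_0,t_1,t_2)$. The paper's sketch has the same hole.

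Your plan to use non-negativity of the $1/t_0$-coefficient would have exposed a deeper problem. For $n=2$ that coefficient in the second entry is $\frac83-\frac32-\frac32=-\frac13$. Also $V=-\frac{10}{3t_0}+\frac{6}{t_2}$ is negative for $t_2>\frac95 t_0$, which is impossible for a Laplacian. The source lies upstream, in two places:
\begin{enumerate}
\item $Z$ acts on the $\mathbb{C}^2$-block of $\mathbb{C}^{n+1}$ by the scalar $\frac{n-1}{n+1}$ (times $i$), not by $1$. With $Q(Z,Z)=\frac{2(n-1)}{n+1}$, the $\mathfrak{u}(1)$-Casimir of central $\U(2)$-weight $w$ is therefore $\frac{n-1}{2(n+1)}w^2$, not $\frac{n+1}{2(n-1)}w^2$.
\item On $L=S(\U(n-1)\times\U(2))$ one has $\det A=(\det B)^{-1}$. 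So the fibre $L/K\cong\U(2)$ acts on $\det^k\otimes\varrho_{\U(2)}(y_1,y_2)$ with weight $(y_1-k,y_2-k)$, and the central weight is $w=y_1+y_2-2k$ rather than $y_1+y_2$.
\end{enumerate}
These corrections have three consequences. The paper's ``basic'' candidate $(1,\dots,1,0)$ with $k=1$, $y=(0,0)$ is not basic, since the fibre acts on it by $\det^{-1}$. The lowest genuinely basic eigenvalue is the adjoint one, $\frac{2(n+1)}{t_0}$. And $\mathbb{C}^{n+1}$ contributes $\frac{n-1}{t_0}+\frac{3}{2t_1}+\frac{n-1}{2(n+1)t_2}$. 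So part (2) has to be recomputed from a corrected spectrum first. Any minimality argument there must treat separately the fibre weights with trivial $\SU(2)$-part (such as $w=\pm2$ from $\Lambda^2\mathbb{C}^{n+1}$), because no single fibre weight minimizes both fibre terms.
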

\begin{proof}
The spectrum of the real Stiefel manifolds has been computed in Theorem \ref{thm: spectrum real stiefel}. The basic representations $(\varrho_{\SO(2)}(0))$ require the parity condition $z_1 \equiv z_2 \pmod 2$. For $m \ge 4$, the minimum occurs at the highest weight $\varrho_{\SO(m)}(1,1,0\dots,0)$, yielding $\eta_{\mathrm{basic}} = \frac{m-2}{t_0}$. For $m=3$ (where $z_2$ does not exist), the minimum occurs at the even weight $z_1=2$, yielding $\eta_{\mathrm{basic}} = \frac{3}{t_0}$. The mixed representations require $z_{n+1} \equiv z_1-z_2 \pmod 2$ and $|z_{n+1}| \le z_1-z_2$. For all $m \ge 3$, the minimum for the lowest non-trivial fiber weight $z_{n+1}=1$ occurs at $\varrho_{\SO(m)}(1,0,\dots,0)$, yielding $\eta_{\mathrm{mixed}} = \frac{m-2}{2t_0} + \frac{1}{2t_1}$.
    
    The spectrum of the complex Stiefel manifolds has been computed in Theorem \ref{thm: spectrum complex stiefel}: The minimal basic representation $(\varrho_H(0,0))$ forces $k=1$ and highest weight\linebreak $\varrho_{\SU(n+1)}(1,1,\dots,1,0)$, yielding $\eta_{\mathrm{basic}}$. The minimal mixed representation requires fiber weights $z_{n+1}=1, z_{n+2}=-1$, forcing $k=0$ and highest weight $\varrho_{\SU(n+1)}(1,0,\dots,0)$, yielding $\eta_{\mathrm{mixed}}$.
\end{proof}

For the specific case of the Einstein metric on $V_2(\R^m)$, our exact formula confirms the conformal $\nu$-instability recently obtained by Wang and Wang \cite[Ex. 4.7]{WangWang22} via basic eigenfunctions.

\begin{cor}
    The unique $\SO(m)$-invariant Einstein metric on $V_2(\mathbb{R}^m)$ $(m \ge 4)$ is strictly scalar-unstable $(\eta_1 < 2\Lambda)$ and therefore dynamically unstable under the Ricci flow. For $m=3$, the unique Einstein metric is the standard normal metric on $V_2(\mathbb{R}^3) \cong \mathbb{R}P^3$, which is neutrally stable $(\eta_1 = 2\Lambda)$.
    
    Both $\SU(n+1)$-invariant Einstein metrics on $V_2(\mathbb{C}^{n+1})$ $(n \ge 2)$ are strictly scalar-unstable $(\eta_1 < 2\Lambda)$.
\end{cor}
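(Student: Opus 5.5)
The plan is to evaluate the explicit first eigenvalues of Corollary \ref{cor:lambda_1_stiefel} at the Einstein parameters of Propositions \ref{prop: Einstein real Stiefel} and \ref{prop: Einstein complex Stiefel}. Since $\eta_1$ is a minimum, it suffices to exhibit one admissible eigenvalue (basic or mixed) strictly below $2\Lambda$. Everything scales like $1/t_0$, so I normalise $t_0=1$ throughout.

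\emph{Real Stiefel manifolds.} Take $t=t_1/t_0=2(m-2)/(m-1)$ and $\Lambda=(m-2)^2/(m-1)$. For $m\ge4$ I only use the basic eigenvalue $m-2$. The inequality
\[
m-2<\frac{2(m-2)^2}{m-1}
\]
is equivalent to $m-1<2(m-2)$, i.e.\ $m>3$. Hence $\eta_1<2\Lambda$, and Remark \ref{rem:sphere} together with \cite{Kr20} gives dynamical instability, since $V_2(\mathbb{R}^m)$ is not a sphere for $m\ge4$.

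For $m=3$ we have $t=1$, so the metric is the normal one, and $2\Lambda=1$. Corollary \ref{cor:lambda_1_stiefel} gives $\eta_1=\min\{3,\tfrac12+\tfrac12\}=1=2\Lambda$. This shows neutral scalar stability, and $V_2(\mathbb{R}^3)\cong\SO(3)\cong\mathbb{R}P^3$.

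\emph{Complex Stiefel manifolds.} Put $s=\sqrt{(n^2-2n)/(n^2-1)}\in[0,1)$. By Proposition \ref{prop: Einstein complex Stiefel},
\[
2\Lambda=\frac{(n-1)(4n+1\mp3s)}{2n-1}.
\]
I first compare this with the basic eigenvalue $2(n-1)(n+2)/(n+1)$. After cross-multiplying, the basic eigenvalue lies strictly below $2\Lambda$ if and only if $(4n+1\mp3s)(n+1)>2(n+2)(2n-1)$. This reduces to
\[
5-n\pm3s(n+1)<0 \quad\text{being violated, i.e.}\quad 5-n\mp3s(n+1)>0 \ \text{fails only when}\ 5-n\mp 3s(n+1)\le 0 .
\]
More concretely, the condition to check is $-n+5\mp3s(n+1)>0$ failing in the wrong direction, so I will verify the sign of $5-n\mp3s(n+1)$ directly in each case.

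For the upper sign ($-3s$ in $2\Lambda$, i.e.\ the "$+$" branch of $t_1$), the relevant quantity is $5-n+3s(n+1)$, which is positive for small $n$; I will check by hand which branch actually places the basic eigenvalue below $2\Lambda$ and treat each branch separately rather than rely on a single sign convention. For the branch where the quantity is $5-n-3s(n+1)$: when $n\ge3$ we have $s\ge\sqrt{3/8}$, so $3s(n+1)\ge 3\sqrt{3/8}\cdot 4>2\ge 5-n$ at $n=3$, and for larger $n$ the left side grows while $5-n$ decreases. The basic eigenvalue therefore suffices there. For the other branch with $n\ge3$, I expect the same basic comparison to go through since $5-n+3s(n+1)$ enters with the favourable sign; this needs to be confirmed explicitly.

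The main obstacle is the remaining cases, in particular $n=2$. There $s=0$, so both branches coincide and give $t_1=t_2=t_0$: the normal metric. In this case the basic eigenvalue $8/3$ does not beat $2\Lambda=3$ via the basic comparison above, since that comparison fails, so the basic eigenvalue alone is not enough and I switch to the mixed eigenvalue of Corollary \ref{cor:lambda_1_stiefel}. With all $t_i=1$ the mixed eigenvalue collapses to $(n^2+2n)/(n+1)=8/3<3=2\Lambda$, which gives strict scalar instability.

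For any $n\ge3$ branch where the basic comparison fails, the fallback is the same: insert $t_1/t_0=1\pm s$ and the explicit $t_2/t_0$ into the mixed eigenvalue and compare with $2\Lambda$ by clearing the radical. I expect this to need a short polynomial inequality in $n$, which is the most computational step. Finally, since no $V_2(\mathbb{C}^{n+1})$ is a sphere, Remark \ref{rem:sphere} upgrades scalar instability to genuine $\nu$-instability.
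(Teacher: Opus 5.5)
\paragraph{The complex case.} Your sign bookkeeping here is inconsistent, and as a result the one branch that needs real work is not done. Expanding gives $(4n+1\mp 3s)(n+1)-2(n+2)(2n-1)=5-n\mp 3s(n+1)$. So the basic value $\frac{2(n-1)(n+2)}{(n+1)t_0}$ lies below $2\Lambda$ exactly when $5-n\mp 3s(n+1)>0$, where the upper sign belongs to $t_1=t_0(1+s)$.

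For the lower-sign metric ($t_1=t_0(1-s)$) the condition is $5-n+3s(n+1)>0$. This holds for all $n\ge 2$, since $s\ge\sqrt{3/8}$ once $n\ge 3$, and it is the paper's argument.

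For the upper-sign metric the condition is $5-n-3s(n+1)>0$. Your own estimate $3s(n+1)>5-n$ shows that it \emph{fails} for every $n\ge 3$, contrary to your expectation that the basic comparison goes through there. At $n=2$, where both metrics are the normal one, the basic comparison does not fail, since $8/3<3$.

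So for $t_1=t_0(1+s)$ and $n\ge 3$ you must use the mixed eigenvalue. Your ``short polynomial inequality which I expect'' is exactly the missing proof. The paper avoids clearing the radical. From $0<s<1$ alone it gets $2\Lambda>\frac{2(n-1)}{t_0}$ and $\eta_{\mathrm{mixed}}<\frac{1}{t_0}\bigl(n-1-\frac{2n}{n^2-1}+\frac32+\frac{(n+1)^2}{4(n-1)^2}\bigr)$. It then checks $\frac32+\frac{(n+1)^2}{4(n-1)^2}-\frac{2n}{n^2-1}\le n-1$ for $n\ge 3$.

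\paragraph{The real case.} You use the basic eigenvalue where the paper uses the mixed one. Measured against the printed Corollary \ref{cor:lambda_1_stiefel}, your inequality $m-2<\frac{2(m-2)^2}{m-1}$ is correct and shorter.

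It should nevertheless set off an alarm. Basic eigenfunctions are pull-backs from $(G_2(\mathbb{R}^m),t_0Q)$, a Hermitian symmetric space with $\eta_1=2\Lambda_B=\frac{2(m-2)}{t_0}$. O'Neill's formula for totally geodesic fibres gives $\Lambda<\Lambda_B$. Hence no basic eigenvalue can lie below $2\Lambda$.

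The mismatch is a factor $2$. The Casimirs in Theorem \ref{thm: spectrum real stiefel} are those for $-\tr$, taken over from the type $B/D$ proof. The constant $\Lambda$ in Proposition \ref{prop: Einstein real Stiefel} is for $Q=-\frac12\tr$. For example, on $\mathbb{R}^m$ the $Q$-Casimir is $m-1$, not $\frac{m-1}{2}$.

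With consistent normalisation, $\eta_{\mathrm{mixed}}=\frac{1}{t_0}\bigl(m-2+\frac{m-1}{2(m-2)}\bigr)$. Then $\eta_{\mathrm{mixed}}<2\Lambda$ if and only if $2x^3-3x^2-2x-1>0$ with $x=m-2$, i.e.\ if and only if $m\ge 5$.
\begin{itemize}
\item For $m=4$ one gets $\eta_1/(2\Lambda)=33/32$. This matches the known value $\eta_1=33/4$ for $T^{1,1}\cong V_2(\mathbb{R}^4)$ with $\Ric=4g$.
\item For $m=3$ the metric is the round $\mathbb{R}P^3$, where $\eta_1=4\Lambda$.
\end{itemize}
So, for the metric of Proposition \ref{prop: Einstein real Stiefel}, neither your argument nor the paper's establishes the claims for $m=3,4$, and these claims are in fact false; only $m\ge 5$ survives.

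The same base test applies to the complex case. The ``basic'' value $\frac{2(n-1)(n+2)}{(n+1)t_0}$ lies below $\eta_1(Gr_2(\mathbb{C}^{n+1}),t_0Q)=\frac{2(n+1)}{t_0}$, so it is not a basic eigenvalue. The complex entries of Corollary \ref{cor:lambda_1_stiefel} should therefore also be rechecked rather than used as a black box.
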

\begin{proof}
For $m=3$, the real Stiefel manifold is isometric to the real projective space $\mathbb{R}P^3$. The Einstein condition enforces $t_1 = t_0$. Evaluating Corollary \ref{cor:lambda_1_stiefel} for $m=3$ yields the first positive eigenvalue $\eta_1 = \eta_{\mathrm{mixed}} = \frac{1}{2t_0} + \frac{1}{2t_0} = \frac{1}{t_0}$, which coincides exactly with the threshold $2\Lambda = \frac{1}{t_0}$, proving neutral stability.

    For $m\geq 4$, the Einstein metric is given by $t_1 = t_0 \frac{2(m-2)}{m-1}$ and $2\Lambda = \frac{2(m-2)^2}{t_0(m-1)}$. The mixed eigenvalue constitutes the minimum: $\eta_1 = \eta_{\mathrm{mixed}} = \frac{2(m-2)^2 + m - 1}{4t_0(m-2)}$. The instability condition $\eta_1 < 2\Lambda$ translates algebraically to $\frac{2(m-2)^2 + m - 1}{4(m-2)} < \frac{2(m-2)^2}{m-1}$. Substituting $x = m-2 \ge 2$, this inequality simplifies to $3x^2 + 2x + 1 < 6x^3$, which holds strictly for all $x \ge 2$.
    
   For the Complex Stiefel Einstein metrics, the threshold for scalar stability is $2\Lambda = \frac{(n+1)t_2}{t_0^2}$. We analyze the two metrics defined by the sign choices in Proposition \ref{prop: Einstein complex Stiefel}. Let $x = \sqrt{\frac{n^2-2n}{n^2-1}} \in (0,1)$.

    For the second Einstein metric (corresponding to the lower signs: $t_1 = t_0(1-x)$ and $t_2 = t_0\frac{n-1}{(n+1)(2n-1)}(4n+1+3x))$, we evaluate the difference between the basic eigenvalue and the threshold:
    \begin{align*}
        \eta_{\mathrm{basic}} - 2\Lambda &= \frac{2(n-1)(n+2)}{t_0(n+1)} - \frac{n-1}{t_0(2n-1)}(4n+1+3x) \\
        &= \frac{n-1}{t_0(n+1)(2n-1)} \big[ 2(n+2)(2n-1) - (n+1)(4n+1+3x) \big] \\
        &= \frac{n-1}{t_0(n+1)(2n-1)} \big[ n - 5 - 3x(n+1) \big].
    \end{align*}
    For $n=2$, the parameter $x$ evaluates to $0$, causing the two Einstein metrics to coincide. The term in brackets evaluates directly to $2 - 5 - 0 = -3 < 0$.
    For $n=3$, the term in brackets evaluates to $-2 - 12x < 0$. For all $n \ge 4$, we have $x \ge \sqrt{8/15} \approx 0.73$, so $3x(n+1) > 2.19n + 2.19 > n - 5$, making the bracket strictly negative. Thus, $\eta_{\mathrm{basic}} < 2\Lambda$ unconditionally, proving this metric is scalar-unstable.

    For the first Einstein metric (corresponding to the upper signs: $t_1 = t_0(1+x)$ and $t_2 = t_0\frac{n-1}{(n+1)(2n-1)}(4n+1-3x))$, we evaluate the mixed eigenvalue $\eta_{\mathrm{mixed}}$. By expanding the coefficients and factoring out $1/t_0$, we have:
    \[
        \eta_{\mathrm{mixed}} = \frac{1}{t_0} \left( n - 1 - \frac{2n}{n^2-1} + \frac{3}{2(1+x)} + \frac{(n+1)^2(2n-1)}{2(n-1)^2(4n+1-3x)} \right).
    \]
    To prove $\eta_{\mathrm{mixed}} < 2\Lambda$, we establish a strict algebraic lower bound for the threshold $2\Lambda$ and a strict upper bound for $\eta_{\mathrm{mixed}}$. First, note that for $n \ge 3$, the square root satisfies $0 < x < 1$ because $0 < n^2-2n < n^2-1$.
    
    For the Einstein threshold, the condition $x < 1$ implies $-3x > -3$, which allows us to bound $2\Lambda$ strictly from below:
    \[
        2\Lambda = \frac{n-1}{t_0(2n-1)}(4n+1-3x) > \frac{n-1}{t_0(2n-1)}(4n-2) = \frac{2(n-1)}{t_0} = \frac{2n-2}{t_0}.
    \]
    Conversely, we bound $\eta_{\mathrm{mixed}}$ from above. Since $x > 0$, the first fraction satisfies $\frac{3}{2(1+x)} < \frac{3}{2}$. Furthermore, since $x < 1$, the denominator of the second fraction satisfies $4n+1-3x > 4n-2 = 2(2n-1)$, which yields $\frac{2n-1}{4n+1-3x} < \frac{1}{2}$. Applying these bounds, we obtain:
    \[
        \eta_{\mathrm{mixed}} < \frac{1}{t_0} \left( n - 1 - \frac{2n}{n^2-1} + \frac{3}{2} + \frac{(n+1)^2}{4(n-1)^2} \right).
    \]
    To show that $\eta_{\mathrm{mixed}} < 2\Lambda$, it now suffices to show that this upper bound is strictly less than or equal to the lower bound of $2\Lambda$, which translates to:
    \[
        n - 1 - \frac{2n}{n^2-1} + \frac{3}{2} + \frac{(n+1)^2}{4(n-1)^2} \le 2n - 2 \quad \Longleftrightarrow \quad \frac{3}{2} + \frac{(n+1)^2}{4(n-1)^2} - \frac{2n}{n^2-1} \le n - 1.
    \]
    For $n \ge 3$, the term $\frac{n+1}{n-1} = 1 + \frac{2}{n-1}$ is strictly decreasing and bounded above by $2$ (its value at $n=3)$. Therefore, $\frac{(n+1)^2}{4(n-1)^2} \le \frac{4}{4} = 1$. Since the subtractive term $-\frac{2n}{n^2-1}$ is strictly negative, the entire left-hand side is strictly less than $\frac{3}{2} + 1 = 2.5$. 
    
    For $n \ge 4$, the right-hand side is $n-1 \ge 3 > 2.5$, meaning the inequality holds unconditionally. For the remaining case $n=3$, direct evaluation of the left-hand side yields $\frac{3}{2} + \frac{16}{16} - \frac{6}{8} = 1.75$, which is strictly less than $n-1 = 2$. 

    Thus, $\eta_{\mathrm{mixed}} < 2\Lambda$ holds strictly for all $n \ge 3$. Therefore, both Einstein metrics on the complex Stiefel manifolds are unconditionally strictly scalar-unstable.

\end{proof}

\subsection{Algebraic Mapping of Yamabe Bifurcations and Local Rigidity}

The Yamabe problem seeks to find a metric of constant scalar curvature within a given conformal class. While the existence of such a metric is always guaranteed, uniqueness generally fails. For the homogeneous principal bundles considered here, the canonical variation $g_{\mathbf{t}}$ naturally provides a 1-parameter family of trivial solutions.

 A natural question in geometric analysis is whether a given homogeneous solution is isolated in its conformal class (\textit{locally rigid}) or whether a new branch of non-homogeneous Yamabe solutions bifurcates. The second variation of the restricted Hilbert-Einstein functional is governed by the operator $$\Delta - \frac{\scal}{D-1},$$ where $D$ is the dimension of the manifold (cf. \cite[Sec. 2]{BettiolPiccione13}). Consequently, a metric is a non-degenerate critical point—and thus unconditionally locally rigid—if the threshold $\frac{\scal}{D-1}$ is not an eigenvalue of the Laplace-Beltrami operator. In particular, any metric whose first positive eigenvalue satisfies $\eta_1 > \frac{\scal}{D-1}$ is a strict local minimum and locally rigid.
 
To study non-uniqueness along a parameter family, one considers the \emph{Morse index} of the functional, defined as the number of positive Laplace-Beltrami eigenvalues (counted with multiplicity) strictly less than $\frac{\scal}{D-1}$. As demonstrated by Bettiol and Piccione \cite{BettiolPiccione13} and Otoba and Petean \cite{OtobaPetean20}, a symmetry-breaking bifurcation is guaranteed to occur whenever this Morse index changes. This happens precisely at \emph{degeneracy values}—parameters where an eigenvalue exactly crosses the threshold $\frac{\scal}{D-1}$. At these points, the jump in the Morse index is exactly the multiplicity of the crossing eigenvalue. Mapping out all bifurcation branches therefore requires exact knowledge of the complete spectrum and its multiplicities.

According to a classical theorem of Obata \cite{Obata71}, any Einstein metric not isometric to the round sphere is the unique constant scalar curvature metric in its conformal class, and is thus unconditionally locally rigid. First, we algebraically recover this local rigidity for the 3-$\alpha$-Sasaki metrics from our spectral data.

\begin{cor}
    Let $g$ be the 3-$\alpha$-Sasaki Einstein metric $(\delta=\alpha)$ on a manifold of dimension $D=4d+3$. The stability threshold for local rigidity is exactly $\frac{\scal}{D-1} = \alpha^2(4d+3)$. Consequently:
    \begin{enumerate}
        \item The Type A metric $(d=n-1)$ is locally rigid $(\eta_1 = 8n\alpha^2 > \alpha^2(4n-1))$ for all $n \ge 2$.
        \item The Type B/D metric $(d=m-4)$ is locally rigid $(\eta_1 > \alpha^2(4m-13))$ for all $m \ge 5$.
\item \textit{Type C:} The 3-$\alpha$-Sasaki metric on the sphere $S^{4n-1}$ exactly hits the threshold, $\eta_1 = \alpha^2(4n-1) = \frac{\scal}{D-1}$. The metric is therefore a degenerate critical point and not locally rigid. In contrast, on the projective space $\mathbb{R}P^{4n-1}$, the first eigenvalue is $\eta_1 = 8n\alpha^2 > \frac{\scal}{D-1}$, making the metric strictly locally rigid.
    \end{enumerate}
\end{cor}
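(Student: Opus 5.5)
The proof splits into three parts: compute the threshold $\frac{\scal}{D-1}$ at $\delta=\alpha$, read off $\eta_1$ at $\delta=\alpha$ from Corollary \ref{thm:lambda_1_sasaki}, and compare the two.

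\textbf{The threshold.} By Proposition \ref{prop:sasaki_einstein},
\[
\scal = 2\big(8d(d+2)\alpha\delta - 6d\alpha^2 + 3\delta^2\big).
\]
At $\delta=\alpha$ this becomes
\[
\scal = 2\alpha^2(8d^2+16d-6d+3) = 2\alpha^2(8d^2+10d+3) = 2\alpha^2(4d+3)(2d+1).
\]
Dividing by $D-1=4d+2=2(2d+1)$ gives $\alpha^2(4d+3)$. As a check, this equals $\frac{D}{D-1}\Lambda_1$.

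\textbf{Case by case.}

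Type A, with $d=n-1$. Here $\frac{\scal}{D-1}=\alpha^2(4n-1)$. The two candidates in Corollary \ref{thm:lambda_1_sasaki} are $8\alpha^2(n+1)$ and $8\alpha^2 n$, so $\eta_1=8n\alpha^2$. Since $8n>4n-1$, the metric is locally rigid.

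Type B/D, with $d=m-4$. Here $\frac{\scal}{D-1}=\alpha^2(4m-13)$. The candidates are $4\alpha^2(m-1)$ and $8\alpha^2(m-3)$. Both exceed $4m-13$ (in units of $\alpha^2$): the first because $4m-4>4m-13$, the second because $8m-24>4m-13$ is equivalent to $m>11/4$.

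Type C, with $d=n-1$. The threshold is again $\alpha^2(4n-1)$.
\begin{itemize}
\item On the sphere the candidates are $8n\alpha^2$ and $4\alpha^2(n-1)+3\alpha^2=(4n-1)\alpha^2$. So $\eta_1$ equals the threshold exactly, and the metric is a degenerate critical point.
\item On $\mathbb{R}P^{4n-1}$ the candidates are $8n\alpha^2$ and $8n\alpha^2$. So $\eta_1=8n\alpha^2>(4n-1)\alpha^2$.
\end{itemize}

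\textbf{Passing from $\eta_1$ to rigidity.} Whenever $\eta_1>\frac{\scal}{D-1}$, no eigenvalue equals the threshold. Hence the metric is a non-degenerate critical point, and indeed a strict local minimum, so it is locally rigid. This is exactly the criterion recalled before the statement.

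\textbf{Main obstacle.} The only delicate input is that $\eta_1$ really is the minimum of the two candidates, i.e.\ that Corollary \ref{thm:lambda_1_sasaki} is correct. That corollary is stated earlier, so I simply invoke it.

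For the sphere, the equality $\eta_1=\frac{\scal}{D-1}$ is consistent with the round sphere's first eigenvalue $\frac{n}{n-1}\Lambda$, rewritten in the present notation. The word "not locally rigid" should be justified via the degenerate first eigenspace, which is generated by conformal (Möbius) transformations. I would cite Obata's theorem and Remark \ref{rem:sphere} for the fact that the round sphere admits a non-trivial family of conformal constant-scalar-curvature metrics.
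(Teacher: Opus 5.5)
Your proposal is correct and follows the same argument the paper compresses into the statement. You evaluate $\frac{\scal}{D-1}$ at $\delta=\alpha$ from Proposition \ref{prop:sasaki_einstein}, read off $\eta_1$ from Corollary \ref{thm:lambda_1_sasaki}, compare the two, and attribute the failure of rigidity on $S^{4n-1}$ to its conformal transformations; your remark that degeneracy alone does not yet imply non-rigidity usefully sharpens the paper's ``therefore''.

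You single out Corollary \ref{thm:lambda_1_sasaki} as the delicate input, and for Type B/D with $m\ge 6$ it is indeed unreliable. Its basic candidate $4\alpha\delta(m-1)$ comes from $\varrho_{\SO(m)}(1,0,\dots,0)$, and $\C^m=\C^4\oplus\C^{m-4}$ has no $\SO(4)\times\SO(m-4)$-fixed vector. Item (2) is therefore better obtained directly from Lichnerowicz--Obata, which gives $\eta_1>\frac{D}{D-1}\Lambda_1=\alpha^2(4d+3)=\frac{\scal}{D-1}$ on every Einstein manifold not isometric to a round sphere.
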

\begin{rem}\label{rem:yamabe_sphere_exception}
    The lack of local rigidity on $S^{4n-1}$ arises because the round sphere admits non-isometric conformal transformations, which generate a continuous family of trivial Yamabe solutions (cf. \cite{BettiolPiccione13}). Passing to the quotient $\mathbb{R}P^{4n-1}$ algebraically eliminates these transformations, pushing the first eigenvalue beyond the critical threshold.
\end{rem}

Furthermore, to locate the continuous points of bifurcation along the parameter family, we substitute our eigenvalue formula \eqref{eq:eigenvalue_formula} into the bifurcation condition. For the classical series of 3-$(\alpha,\delta)$-Sasaki manifolds, this yields:
\[
    \delta^2 c_{Q\restriction\su(2)}(\varrho_H) + 2\alpha\delta \big(c_Q(\varrho_G) - c_{Q\restriction\su(2)}(\varrho_H)\big) = \frac{8d(d+2)\alpha\delta - 6d\alpha^2 + 3\delta^2}{2d+1}.
\]
Because Yamabe metrics are invariant under global volume scaling, the bifurcation branches depend solely on the parameter ratio $x = \delta/\alpha > 0$. Dividing the equation by $\alpha^2$, we reduce the entire geometric bifurcation problem to a single quadratic equation:
\begin{equation}\label{eq:bifurcation_master}
    \left( c_{Q\restriction\su(2)}(\varrho_H) - \frac{3}{2d+1} \right) x^2 + 2 \left( c_Q(\varrho_G) - c_{Q\restriction\su(2)}(\varrho_H) - \frac{4d(d+2)}{2d+1} \right) x + \frac{6d}{2d+1} = 0.
\end{equation}

Locating Yamabe bifurcations—especially those that break the structural fiber symmetries $(c_{Q\restriction\su(2)}(\varrho_H) > 0)$—is equivalent to finding positive roots $x$ of \eqref{eq:bifurcation_master}. 

The values of the geometric dimension parameter $d$, as well as the explicit Casimir constants $c_Q$ and $c_{Q\restriction\su(2)}$ evaluated in terms of the highest weights, were established in the proofs of Theorems \ref{thm: spec Al 3 alpha delta sasaki}, \ref{thm: spectrum type B,D}, and \ref{thm: spectrum 3 sasaki type C}. By substituting the algebraically admissible highest weights into \eqref{eq:bifurcation_master}, one obtains the exact discrete parameter ratios $\delta/\alpha$ (the degeneracy values) at which bifurcations occur. As the parameter $x$ crosses such a root, the resulting jump in the Morse index is exactly given by the sum of the multiplicity formulas $\mult(\eta)$ evaluated for the crossing representations. We note that since the scalar curvature blows up as the fibers shrink $(x \to \infty)$, the critical threshold crosses the infinitely many constant basic eigenvalues $(c_{Q\restriction\su(2)}(\varrho_H) = 0)$. Hence, Equation \eqref{eq:bifurcation_master} yields an infinite sequence of symmetry-breaking bifurcations accumulating as $x\rightarrow\infty$.

This algebraic approach to Yamabe bifurcations applies equally well to the families of real and complex Stiefel manifolds presented in Section \ref{sec:Stiefel Manifolds}, requiring only the substitution of their respective scalar curvature functions.

\end{document}